\documentclass[11pt, reqno]{amsart}
\usepackage{lipsum}
\usepackage{a4wide}
\usepackage{amssymb} 
\usepackage{amsmath}
\usepackage{amsthm} 
\usepackage{tikz} 
\usepackage{amscd}
\usepackage{hyperref}

\usepackage{stmaryrd}
\usepackage{enumitem}
\hypersetup{colorlinks,linkcolor={blue},citecolor={blue},urlcolor={red}}
\theoremstyle{plain}
\usepackage{comment}
\numberwithin{equation}{section}

\newcommand{\rank}{\operatorname{rank}}

\newtheorem{theorem}{Theorem}[section]
\newtheorem{corollary}[theorem]{Corollary} 
\newtheorem{lemma}[theorem]{Lemma}
\newtheorem{remark}[theorem]{Remark}
\newtheorem{proposition}[theorem]{Proposition}

\newtheorem{definition}[theorem]{Definition}

\newcommand{\gitquot}{\mathbin{\backslash\!\!\backslash}}

\setlength{\parskip}{.4em}
\setcounter{tocdepth}{3}
\title{On the GIT quotient of Grassmannians by one dimensional torus}

\author[B.~N.~Chary]{Narasimha Chary Bonala}
\address{Narasimha Chary Bonala\\
Department of Mathematics and Statistics, Indian Institute of Technology Kanpur,
U.P. India, 208016.
}
\email{chary@iitk.ac.in}

\author[S.~S.~Kannan]{S.~Senthamarai Kannan}
\address{S. Senthamarai Kannan\\
Chennai Mathematical Institute, Plot H1, SIPCOT IT Park, Siruseri, Kelambakkam,
603103, India.
}
\email{kannan@cmi.ac.in}

\author[S.~Pattanayak]{Santosha Pattanayak}
\address{Santosha Pattanayak\\
Department of Mathematics and Statistics, Indian Institute of Technology Kanpur,
U.P. India, 208016.
}
\email{santosha@iitk.ac.in}

\begin{document}


\begin{abstract} We consider the action of the one-parameter subgroup of the special linear group corresponding to a simple root on Grassmannians and describe the structure of the associated Geometric Invariant Theory (GIT) quotients with respect to Pl\"ucker line bundle. Using the combinatorics of Weyl group elements, we explicitly describe the semistable loci and identify cases where the resulting quotient admits the structure of a parabolic induction of a projective space. We further analyze the orbit structure under the Levi subgroup, compute the Picard group, connected component of the automorphism group and examine key geometric features such as Fano properties, cohomology of line bundles, and projective normality with respect to the descended linearization. 
\end{abstract}

\maketitle

\section{Introduction}

The action of a maximal torus $T \subset SL(n, \mathbb C)$ on the Grassmannian $\mathrm{G}(r,n)$, and more generally on the flag variety $G/P$, has been a central object of study in algebraic geometry, with deep connections to representation theory, symplectic geometry, and moduli theory. The resulting GIT quotients, referred to as \emph{weight varieties} by Allen Knutson in his thesis \cite{Knu}, capture rich geometric and combinatorial structure. In \cite{HK}, Hausmann and Knutson established that the GIT quotient $\mathrm{G}(2,n)\mathbin{/\mkern-5mu/} T$ of the Grassmannian $G(2,n)$ by $T$ can be interpreted as the moduli space of polygons in 
$\mathbb{R}^3$. They further showed that this quotient can also be realized as 
the GIT quotient of an $n$-fold product of projective lines by the diagonal 
action of $\mathrm{PSL}(2,\mathbb{C})$. More generally, using the 
Gel'fand--MacPherson correspondence, the GIT quotient of $G(r,n)$ by $T$ may be 
identified with the GIT quotient of $(\mathbb{P}^{r-1})^n$ by the diagonal 
action of $\mathrm{PSL}(r,\mathbb{C})$, thus providing a moduli-theoretic perspective rooted in classical invariant theory.

Kapranov's work (\cite{Kap1}, \cite{Kap2}) further established that the Grothendieck--Knudsen moduli space $\overline{M}_{0,n}$ arises as the Chow quotient of the Grassmannian $\mathrm{G}(2,n)$ by the torus action, connecting these constructions to the geometry of stable rational curves. In parallel, foundational results by Dabrowski \cite{D}, Carrell-Kurth \cite{CK}, and Howard \cite{How} demonstrated that closures of $T$-orbits in the flag variety $G/P$ are normal and projectively normal, ensuring well-behaved embeddings and coordinate rings. Together, these developments provide a robust framework for analyzing torus quotients of flag varieties, unifying themes from GIT, moduli theory, and representation theory, and paving the way for new geometric and combinatorial insights.

In \cite{GW}, Giansiracusa and Wu studied the diagonal subtorus orbits in the Grassmannian. They established that the closures of orbits of a diagonal subtorus acting on the Grassmannian $G(r,n)$ correspond bijectively to representable discrete polymatroids of rank $r$ on multisets determined by the subtorus structure. They construct the associated Chow quotient, generalizing Kapranov’s approach, and show that this quotient compactifies a space of configurations of linear subspaces with prescribed coincidences. They extended the classical Gelfand–MacPherson correspondence and Gale duality to this new setting, and identified cases where the Chow quotient is birational to moduli spaces such as the Chen–Gibney–Krashen spaces. 

However, the Geometric Invariant Theory quotients of Grassmannians (and more generally of flag varieties) with respect to subtori of a maximal torus have received comparatively little attention in the literature. Recently, in \cite{ORSW}, the authors investigate the relationship between the Chow and the GIT quotients of a projective variety by the $\mathbb C^*$-action and construct explicitly the Chow quotient from the GIT quotient through successive blowups under some assumptions. While the GIT quotients by a maximal torus capture rich combinatorial structures related to weight polytopes and matroid theory, the study of subtorus actions introduces new geometric phenomena. These quotients interpolate between the well understood quotients by a maximal torus and the more rigid setting of Levi subgroup quotients, offering a refined perspective on stability conditions, orbit closures, and degenerations.

\subsection{GIT quotients by one-parameter subgroups:}

We now focus on the most fundamental case of a subtorus action that of a one-parameter subgroup.  

Let $G$ be a connected semisimple algebraic group over $\mathbb{C}$, with a fixed maximal torus $T$ of $G$ and a Borel subgroup $B$ of $G$ containing $T$ and corresponding root system $R \subset X^*(T)$.  
Let $\lambda : \mathbb{G}_m \to T$ be a one-parameter subgroup of $T$, that is, an element of the cocharacter lattice $Y(T) = \operatorname{Hom}(\mathbb{G}_m, T)$.  
For simplicity of notation, we denote $\lambda(\mathbb G_m)$ by $\lambda$. Such a one-parameter subgroup defines an algebraic $\mathbb{G}_m$-action on any projective $G$-variety equipped with a $T$-linearization. In particular, for any parabolic subgroup $P \subset G$, the induced $\mathbb{G}_m$-action on the partial flag variety $G/P$ provides a natural framework for Geometric Invariant Theory (GIT) once an ample line bundle $\mathcal{L}(\chi)$ associated to a dominant character $\chi \in X^*(T)$ is fixed.

Since $C_G(T) = T$ and $N_G(T)/T$ is finite, there is no natural action of a 
positive-dimensional connected reductive group arising from a subgroup of $G$ on the invariant coordinate ring
\[
\bigoplus_{d \in \mathbb{Z}_{\geq 0}} H^0(G/P, \mathcal{L}(d\chi))^T .
\]
In contrast, for a one-parameter subgroup $\lambda$, the reductive group 
$C_G(\lambda)$ acts naturally on 
\[
\bigoplus_{d \in \mathbb{Z}_{\geq 0}} H^0(G/P, \mathcal{L}(d\chi))^{\lambda}.
\]
This setting admits a richer structure, as the presence of a reductive group action allows us to employ both representation–theoretic and geometric methods. In 
particular, one may consider the induced action of $C_G(\lambda)$ on the GIT quotient 
\[
\lambda \backslash\!\backslash (G/P)^{ss}_{\lambda}(\mathcal{L}(\chi)) ,
\]
where $(G/P)^{ss}_{\lambda}(\mathcal{L}(\chi))$ denotes the $\lambda$-semistable 
locus with respect to $\mathcal{L}(\chi)$.

Studying this quotient as a $C_G(\lambda)$-variety is an interesting problem in its 
own right, as it connects invariant theory with the geometry of partial flag varieties. 

This perspective has been developed in several recent works. In \cite{K1}, \cite{K2}, \cite{KP09} and \cite{KS09}, the action of one-parameter subgroups $\lambda$ has been used to study the GIT quotients of Schubert varieties and flag varieties by a maximal torus.  
More recently, Ghosh and Kannan \cite{GK} analyzed the GIT quotients of Schubert varieties $X(w) \subset G/P$ under the action of the one-dimensional torus $\lambda_s(\mathbb G_m)$ associated with the simple root $\alpha_s$, where $P$ is a maximal parabolic subgroup of $G$. They provide a combinatorial criterion for the existence of semistable points with respect to a $\lambda_s(\mathbb G_m)$-linearized line bundle $\mathcal{L}(\chi)$, and show that there exists a unique minimal Schubert variety for which the semistable locus is non-empty. Specializing to $G = PSL(n, \mathbb{C})$ and line bundles associated with multiples of minuscule fundamental weights, they give an explicit description of the GIT quotient of this minimal Schubert variety as a projective space of complex matrices. Furthermore, for the action of $\lambda_s$ on the Grassmannian $G(r,n)$, they showed that $G(r,n)_{\lambda_s}^{ss}(\mathcal L(n\varpi_r)) =G(r,n)_{\lambda_s}^s(\mathcal L(n\varpi_r))$ if and only if $n$ does not divide $rs$.

In this paper, we consider the action of $\lambda:=n\lambda_s$ on $G(r,n)$ linearized by the ample line bundle $\mathcal{L}(\varpi_r)$ corresponding to the fundamental weight $\varpi_r$, where $\lambda_s$ is the coweight correspond to the fundamental weight $\varpi_s$. We analyze the structure of the quotient
$\lambda \gitquot G(r,n)_{\lambda}^{ss}(\mathcal{L}(\varpi_r))$ as a $H$-variety where $H$ is the semisimple part of the centralizer of $\lambda$ in $SL(n, \mathbb C)$. Note that $\lambda=n\lambda_s$. 

The geometry of these quotients depends crucially on the relative position of the parameters $r$, $s$, and $n$.  
Let $p = \lfloor \frac{rs}{n} \rfloor$.  
From \cite{GK}, it is known that the Schubert variety $X(w_{s,r})$ associated to the Weyl group element
\[
w_{s,r} = (s_s \cdots s_{p+1})(s_{s+1}\cdots s_{p+2})\cdots(s_{s+r-p-1}\cdots s_r)
\]
plays a key role in the description of the semistable locus of the $\lambda(\mathbb G_m)$-linearized line bundle $\mathcal L(\varpi_r)$ on $G(r,n)$. It is the minimal Schubert variety in $G(r, n)$ for which the semistable locus is non-empty. 
The combinatorics of the reduced expression of 
\[
w_0^{S \setminus \{\alpha_r\}} = \tilde{w}.w_{s,r}\quad \text{such that}\quad l(\tilde{w})+l(w_{s,r})=l(w_0^{S \setminus \{\alpha_r\}}),
\]
where $w_0^{S \setminus \{\alpha_r\}}$ is the minimal representative of the coset $w_0W_{S \setminus \{\alpha_r\}}$, controls how the semistable locus extends from $X(w_{s,r})$ to the entire Grassmannian.

A particularly transparent situation arises when $p = 0$ or $p=r + s - n$, provided $r+s \geq n$.  
In these cases, the semistable locus of $G(r,n)$ is the $H = SL(s,\mathbb C) \times SL(n-s, \mathbb C)$–saturation of the semistable locus of the Schubert variety $X(w_{s,r})$ (see Proposition \ref{prop:semilocus}).  
In \cite{GK} it is proved that $\lambda \gitquot X(w_{s,r})_{\lambda}^{ss}(\mathcal{L}(\varpi_r))$ is the projective space $\mathbb{P}(M_{s-p, r-p})$ of the $s-p \times r-p$ matrices. We give an explicit description of the full quotient (Theorem \ref{thm:structure}):
\[
\lambda \gitquot G(r,n)_{\lambda}^{ss}(\mathcal{L}(\varpi_r)) \;\simeq\; H \times_{P_{s,r}} \mathbb{P}(M_{s-p, r-p}),
\]
where $P_{s,r}$ denotes the stabilizer of $X(w_{s,r})$ in $H$.  
Thus, in these cases, the quotient denoted by $X$ is realized as a \emph{parabolic induction} of a projective space, or equivalently, a homogeneous fiber bundle over $H/P_{s,r}$.
This structure enables us to study its geometric properties. Specifically, we describe the $H$-orbit decomposition of the quotient variety $X$ (Proposition \ref{prop:Starti}), compute its Picard group (Corollary \ref{cor:Pic}), and we prove that $X$ is Fano (Theorem \ref{thm:fano}). 
Moreover, we compute the connected component of its automorphism group (Theorem \ref{thm:aut}) containing the identity automorphism using Brion’s theory of relative automorphisms of homogeneous bundles (see \cite{Br}).

We also analyze the cohomology of line bundles on $X$ (Theorem \ref{thm:cohomology}), determine the structure of the global sections as an $H$-module (Theorem \ref{thm:G decomposition}), and establish the projective normality of $X$ with respect to the descent of the line bundle $\mathcal{L}(\varpi_r)$ from the Grassmannian (Theorem \ref{thm:projective_normality}).  
These results collectively provide a detailed geometric and representation-theoretic understanding of the quotient $\lambda \gitquot G(r,n)_{\lambda}^{ss}(\mathcal{L}(\varpi_r))$.

The paper is organized as follows. 
In \textbf{Section 2}, we recall the basic preliminaries on the action of one-parameter subgroups and review the necessary tools from Geometric Invariant Theory.
In \textbf{Section 3}, we analyze the structure of the GIT quotient for the action of $\lambda$. Using the combinatorics of the Weyl group element $w_{s,r}$, we describe the semistable loci explicitly and identify cases in which the quotient admits the structure of a parabolic induction of a projective space. We also provide a detailed description of the orbits of the Levi subgroup $H$ and their closures. 
In \textbf{Section 4}, we study the geometric properties of the quotient. In particular, we compute its Picard group, show that the quotient variety is Fano, and analyze the cohomology of  line bundles. We further describe the connected component of the automorphism group, the $H$-module decomposition of global sections of line bundles, and prove projective normality with respect to the descended Plücker linearization.


\section{Preliminaries}

 In this section, we set up some preliminaries and notation. We refer to \cite{Hum}, \cite{Hum1}, \cite{Spr} for preliminaries in Lie algebras and algebraic groups. Let $G$ be a semi-simple algebraic group over $\mathbb C$. We fix a maximal torus $T$ of $G$ and a Borel subgroup $B$ of $G$ containing $T$.  Let $N_G(T)$ be the normaliser of $T$ in $G$. Let $W = N_G(T)/T$ be the Weyl group of $G$ with respect to $T$. Let $R$ denote the set of roots with respect to $T$ and $R^+$ be the set of positive roots with respect to $(T,B)$. Let $S = \{\alpha_1,\alpha_2,\ldots,\alpha_n\} \subset R$ be the set of simple roots and for a subset $I\subseteq S$ we denote by $P_I$  the parabolic subgroup of $G$ generated by $B$ and $\{n_{\alpha}: \alpha \in I\}$, where $n_{\alpha}$ is a representative of $s_{\alpha}$ in $N_{G}(T)$. Let $X(T)$ (resp. $Y(T)$) denote the set of characters of $T$ (resp. one-parameter subgroups of $T$). Let 
 \[E_1 := X(T) \otimes \mathbb{R}, \quad  E_2 := Y(T) \otimes \mathbb{R}.\] Let $\langle.,.\rangle: E_1 \times E_2 \to \mathbb{R}$ be the canonical non-degenerate bilinear form. Let \[\overline{C(B)}:=\{\nu\in E_{2}|\ \langle \alpha,\nu\rangle\geq0,\ \text{for all} ,\ \alpha\in R^{+}\}.\] For all $\alpha \in R$, there is an injective homomorphism $\phi_\alpha : SL_2 \to G$ such that
	  	   \[
           \check{\alpha}(t)=\phi_{\alpha} ( \left[ {\begin{array}{cc}
   t & 0\\
   0 & t^{-1} \\
  \end{array} } \right]).\] We also have $s_{\alpha}(\chi) = \chi - \langle\chi,\check{\alpha}\rangle \alpha$ for all $\alpha \in R$ and $\chi \in E_1$. Set $s_i = s_{\alpha_i}$ for all $i = 1, 2, \ldots,n$. Let $\{\varpi_i: i=1, 2, \ldots, n\} \subseteq E_1$ be the fundamental weights; i.e.   
           $\langle\varpi_i,\check{\alpha_j}\rangle = \delta_{ij}$ for all $i,j = 1,2,\ldots n$ and $\{\lambda_i: i=1,2,\cdots,n\} \subseteq E_2$ be the set of fundamental coweights, i.e.   
           $\langle\alpha_i,\lambda_j \rangle = \delta_{ij}$ for all $i,j = 1,2,\ldots n$. Let $U$ be the unipotet radical of $B$. For each root $\alpha \in R$, we denote by $U_{\alpha}$, the $T$-stable root subgroup of $B$ corresponding to $\alpha$ for the conjugation action.

	There is a natural partial order $\le$ on $X(T)$ defined by $\psi\leq \chi$ if and only if $\chi-\psi$ is a nonnegative integral linear combination of simple roots.
	Let $u_{\alpha}:\mathbb{C}\longrightarrow U_{\alpha}$ be the isomorphism such that $tu_{\alpha}(a)t^{-1}=u_{\alpha}(\alpha(t)a)$, for all $t\in T$, $a\in \mathbb{C}$.

 For a simply connected semi-simple algebraic group $G$ and for a parabolic subgroup $P$ of $G$, the quotient space $G/P$ is a homogeneous space for the left action of $G$. The quotient $G/P$ is called a generalized flag variety. When $G=SL(n, \mathbb C)$ and $P_{S \setminus \{\alpha_r\}}$ is the maximal parabolic subgroup corresponding to the simple root $\alpha_r$, the quotient can be identified with $G(r,n)$, the Grassmannian of $r$ dimensional vector subspaces of $\mathbb C^n$. 

\subsection{Hilbert-Mumford Criterion} Let $G$ be a reductive group acting on a projective variety $X$. Let $\nu$ be a one-parameter subgroup of $G$. Let $\mathcal{L}$ be a $G$-linearized very ample line bundle on $X$.
	\begin{enumerate}
		\item 
		Let $x\in \mathbb{P}(H^{0}(X,\mathcal{L})^{*})$ and $\hat{x}$ be a point in the cone $\widehat{X}$ over X which lies on $x$.  Let $\{v_{i}: 1\leq i\leq k\}$ be a basis of $H^{0}(X,\mathcal{L})^{*}$ such that $\nu(t)\cdot v_{i}=t^{m_{i}}v_{i}$, for $1\leq i\leq k$. Write $\hat{x}=\displaystyle\sum_{i=1}^{k}c_{i}v_{i}$, $c_i \in \mathbb C$ for $1 \leq i \leq k$. Then the Hilbert-Mumford numerical function is defined by $$\mu^{\mathcal{L}}(x,\nu):= -\displaystyle \min_{i}\{m_{i}| c_{i}\neq 0\}.$$
		\label{definition of HM weight}
		\label{HM criterion}
		\item 
		 The set of semistable points is defined as \[ X^{ss}_{G}(\mathcal{L})=\{x\in X|\ \exists \  s \ \in H^{0}(X,\mathcal{L}^{\otimes n})^{G}\ \text{for some}\ n\in \mathbb N\ \text{such that}\ s(x)\neq 0\}.\]
			\label{semistability def}

			\item The set of stable points is defined as
			$X^{s}_{G}(\mathcal{L})=\{x\in X^{ss}_{G}(\mathcal{L})| $ the orbit $G\cdot x$ is closed in $X^{ss}_{G}(\mathcal{L})$ and the stabilizer $G_{x}$ of $x$ in $G$ is finite$\}.$
		
	\end{enumerate}
	We then have the Hilbert-Mumford criterion:
	\begin{theorem}$($see \cite[Theorem 2.1]{MFK}$)$\label{thm:Hilbert-Mumford} Let $x\in X$. Then 
		\begin{enumerate}
			\item x is semi-stable if and only if $\mu^{\mathcal{L}}(x,\nu)\geq 0$ for every one parameter subgroup $\nu$ of $G$.
			\item x is stable if and only if $\mu^{\mathcal{L}}(x,\nu)> 0$ for every non trivial one parameter subgroup $\nu$ of $G$. 
		\end{enumerate}
		\label{HM theprem for G}
	\end{theorem}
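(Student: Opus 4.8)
The statement is the classical Hilbert--Mumford criterion, so the plan is to convert the numerical conditions detected by $\mu^{\mathcal L}(\cdot,\nu)$ into geometric conditions on orbit closures in the affine cone, and then to invoke the fundamental fact that degenerations of orbits under a reductive group are witnessed by one-parameter subgroups. First I would use the very ample linearized bundle $\mathcal L$ to fix the embedding $X\hookrightarrow\mathbb{P}(H^0(X,\mathcal L)^*)$, set $W:=H^0(X,\mathcal L)^*$, and for $x\in X$ choose a lift $\hat x\in\widehat X\setminus\{0\}\subset W$ in the affine cone. Since $G$ is reductive, its ring of invariants on $W$ is finitely generated and separates closed orbits; hence semistability of $x$ is equivalent to $0\notin\overline{G\cdot\hat x}$, while stability of $x$ corresponds, after accounting for the scaling $\mathbb{G}_m$, to $G\cdot\hat x$ being closed with $\hat x$ having finite stabilizer in the image of $G$.

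The second step translates the limiting behaviour of one-parameter subgroups into the numerical function. Fixing the eigenbasis $\{v_i\}$ with $\nu(t)\cdot v_i=t^{m_i}v_i$ and writing $\hat x=\sum_i c_i v_i$, one has
\[
\nu(t)\cdot\hat x=\sum_{c_i\neq 0}c_i\,t^{m_i}v_i ,
\]
so that $\lim_{t\to 0}\nu(t)\cdot\hat x=0$ precisely when $\min\{m_i:c_i\neq 0\}>0$, i.e. $\mu^{\mathcal L}(x,\nu)<0$; the limit exists and is nonzero when $\min\{m_i:c_i\neq 0\}=0$, i.e. $\mu^{\mathcal L}(x,\nu)=0$; and the limit fails to exist when $\mu^{\mathcal L}(x,\nu)>0$. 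Whenever it exists, $\lim_{t\to 0}\nu(t)\cdot\hat x$ lies in $\overline{G\cdot\hat x}$, and it is fixed by $\nu$.

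The crucial input is the fundamental theorem of Mumford and Kempf: if a closed $G$-invariant subset $Z\subseteq W$ meets $\overline{G\cdot\hat x}$, then there is a one-parameter subgroup $\nu$ of $G$ with $\lim_{t\to 0}\nu(t)\cdot\hat x\in Z$. I would establish this through the valuative criterion of properness: membership of a point of $Z$ in the orbit over the generic point yields a morphism $g\colon\operatorname{Spec}\mathbb{C}(\!(t)\!)\to G$ with $\lim_{t\to 0}g(t)\cdot\hat x\in Z$, and the Cartan decomposition $g(t)=k_1(t)\,\check\chi(t)\,k_2(t)$, with $k_i\in G(\mathbb{C}[\![t]\!])$ and $\check\chi$ a cocharacter of $T$, reduces this arbitrary degeneration to a single cocharacter: after replacing $\hat x$ by $k_2(0)\cdot\hat x$ and conjugating $\check\chi$ by $k_1(0)$, a standard estimate on the $\check\chi$-weights shows that $\check\chi$ itself drives $\hat x$ into $Z$. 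This is the step I expect to be the genuine obstacle, as it is precisely where the structure theory of reductive groups over the Laurent-series field enters and where an abstract orbit-closure degeneration is upgraded to an explicit one-parameter subgroup.

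With these pieces the theorem follows by contraposition. Taking $Z=\{0\}$, the point $x$ is non-semistable iff $0\in\overline{G\cdot\hat x}$ iff some $\nu$ satisfies $\lim_{t\to 0}\nu(t)\cdot\hat x=0$ iff some $\nu$ has $\mu^{\mathcal L}(x,\nu)<0$; negating gives part (1). For part (2), if $\mu^{\mathcal L}(x,\nu)=0$ for a nontrivial $\nu$, then $\hat y:=\lim_{t\to 0}\nu(t)\cdot\hat x$ exists, is nonzero, and is $\nu$-fixed; either $\hat y\notin G\cdot\hat x$, forcing the orbit to be non-closed, or $\hat y=g\cdot\hat x$ with $g^{-1}\nu g$ in the stabilizer, forcing it to be positive-dimensional, and in both cases $x$ is not stable. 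Conversely, if $x$ is semistable but not stable, then either $\overline{G\cdot\hat x}$ contains a strictly smaller closed orbit $Z$, reached by some nontrivial $\nu$ via the fundamental theorem and giving $\mu^{\mathcal L}(x,\nu)=0$, or the stabilizer contains a subtorus, any nontrivial $\nu$ in which fixes $\hat x$ and again yields $\mu^{\mathcal L}(x,\nu)=0$; this completes the equivalence.
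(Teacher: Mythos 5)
The paper itself offers no proof of this statement: it is quoted as Theorem 2.1 of Mumford--Fogarty--Kirwan and used as a black box, so your proposal can only be compared with the classical argument you are reconstructing. On that score your outline is the standard one --- pass to the affine cone, compute $\lim_{t\to 0}\nu(t)\cdot\hat x$ in a $\nu$-eigenbasis (your three cases match the paper's sign convention $\mu^{\mathcal L}(x,\nu)=-\min\{m_i: c_i\neq 0\}$), prove the fundamental theorem that closed invariant sets meeting $\overline{G\cdot\hat x}$ are reached by one-parameter subgroups via the Iwahori decomposition $g(t)=k_1(t)\check\chi(t)k_2(t)$ over $\mathbb{C}(\!(t)\!)$, and conclude by contraposition --- and you correctly identify the Iwahori step as the genuine content. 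Two repairs are needed, though, before this is a proof.

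First, a small but real slip in the Iwahori step: the cocharacter that drives the \emph{original} point $\hat x$ into $Z$ is $\nu(t)=k_2(0)^{-1}\check\chi(t)k_2(0)$, i.e.\ the conjugation is by the constant term of the right-hand factor $k_2$; conjugating $\check\chi$ by $k_1(0)$ gives you no control over its limit on $\hat x$. (One first shows $\lim_{t\to 0}\check\chi(t)k_2(t)\cdot\hat x\in Z$, using that $k_1(t)^{-1}$ converges in $G$ and that $Z$ is closed and $G$-stable, and only then does the weight estimate allow replacing $k_2(t)$ by $k_2(0)$.) Second, and more substantively, in the converse direction of part (2) your dichotomy is not exhaustive as stated: if $x$ is semistable but not stable, the failure could consist of $G\cdot\hat x$ being closed while $G_{\hat x}$ is infinite with \emph{unipotent} identity component, and a unipotent group admits no nontrivial one-parameter subgroup, so ``the stabilizer contains a subtorus'' does not follow from ``the stabilizer is infinite.'' Closing this requires Matsushima's criterion: a closed $G$-orbit in an affine $G$-variety is an affine homogeneous space, hence its isotropy group is reductive, and a positive-dimensional reductive group does contain a nontrivial torus (equivalently, an infinite unipotent stabilizer already forces the orbit to be non-closed, returning you to your first case). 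You also assert, ``after accounting for the scaling $\mathbb{G}_m$,'' the equivalence between the paper's definition of stability on $X$ (orbit closed in $X^{ss}_G(\mathcal L)$, finite stabilizer) and the cone-level condition on $\hat x$; that equivalence is itself a lemma in MFK and should be stated and proved, not absorbed into a parenthesis.
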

		Let $\chi=\displaystyle\sum_{i=1}^{n}m_{i}\omega_{i}$ be a non trivial dominant character of $T$. Let $J=\{\alpha_{i}\in S:m_{i}=0\}$. Let $P=P_{J}$.
	For $w\in W^{J}$, let $X(w):=\overline{BwP/P}$ denote the Schubert variety in $G/P$ corresponding to $w$.

	Now, we recall a Lemma  due to C. S. Seshadri. This will be used for computing semistable points.
	\begin{lemma} $($see \cite[Lemma 5.1]{CSS}$)$\label{SeS2}
		\begin{enumerate}
		\item Let $x=bwP/P$. 
		Let $\nu\in \overline{C(B)}$ be a one-parameter subgroup of $G$.  Then we have \[\mu^{\mathcal{L}(\chi)}(x,\nu)=-\langle w(\chi),\nu\rangle.\]
		 
\item Given a finite set $A$ of non trivial one parameter subgroups of $T$,  there is an ample line bundle $\mathcal{L}$ on $G/B$ such that $\mu^{\mathcal{L}}(x, \nu)\neq 0$ for all $x\in G/B$, and for all $\nu \in A$. \end{enumerate}

		\label{CSS}
	\end{lemma}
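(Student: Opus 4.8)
The plan is to derive both statements directly from the combinatorial definition of the Hilbert--Mumford function $\mu^{\mathcal{L}}(x,\nu) = -\min_i\{m_i : c_i \neq 0\}$ together with the Borel--Weil realization of $G/P$. First I would embed $G/P$ into $\mathbb{P}(V(\chi))$ as the closed $G$-orbit of the highest weight line, where $V(\chi) = H^0(G/P,\mathcal{L}(\chi))^*$ is the irreducible module of highest weight $\chi$, and fix a highest weight vector $v_\chi$. For part (1), the first step is to lift $x = bwP/P$ to the affine cone: a natural representative is $\hat{x} = b\cdot n_w v_\chi$. Since $n_w v_\chi$ is a $T$-weight vector of weight $w(\chi)$, I would write $b = tu$ with $t\in T$, $u\in U$, and expand $u\cdot n_w v_\chi$ using the root group maps $u_\alpha(a)$ for $\alpha\in R^+$, whose associated raising operators increase the $T$-weight by positive roots.

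The key computation is then that every $T$-weight occurring in $\hat{x}$ has the form $w(\chi)+\eta$ with $\eta$ a nonnegative integral combination of positive roots, and that the extremal term $n_w v_\chi$ (the case $\eta = 0$) survives with nonzero coefficient, because $u$ acts as a unipotent operator fixing this leading vector modulo terms of strictly higher $T$-weight; multiplication by $t$ only rescales weight components and changes neither the set of weights present nor their nonvanishing. Pairing with $\nu\in\overline{C(B)}$ gives $m_i = \langle w(\chi)+\eta,\nu\rangle = \langle w(\chi),\nu\rangle + \langle\eta,\nu\rangle \geq \langle w(\chi),\nu\rangle$, since $\langle\alpha,\nu\rangle\geq 0$ for every $\alpha\in R^+$. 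Hence the minimum is attained at the surviving $\eta = 0$ term, and $\mu^{\mathcal{L}(\chi)}(x,\nu) = -\langle w(\chi),\nu\rangle$.

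For part (2), I would first reduce an arbitrary nontrivial $\nu\in A$ to a dominant one using the $G$-equivariance $\mu^{\mathcal{L}}(x,\nu) = \mu^{\mathcal{L}}(n_\sigma x,\, n_\sigma\nu n_\sigma^{-1})$: choose $\sigma\in W$ with $\nu^+ := n_\sigma\nu n_\sigma^{-1}\in\overline{C(B)}$, write $n_\sigma x = bwB/B$ in its Bruhat cell, and apply part (1) with $P=B$ to obtain $\mu^{\mathcal{L}(\chi)}(x,\nu) = -\langle w(\chi),\nu^+\rangle$ for some $w\in W$. As $x$ ranges over $G/B$ and $\nu$ over the finite set $A$, these values lie in the finite set $\{-\langle w(\chi),\nu^+\rangle : w\in W,\ \nu\in A\}$. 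Each such value vanishes precisely when the weight $\chi$ lies on $\{\chi : \langle\chi, w^{-1}\nu^+\rangle = 0\}$, which is a proper hyperplane because $w^{-1}\nu^+\neq 0$ (as $\nu$ is nontrivial). Writing $\chi = \sum_i m_i\varpi_i$, each condition is a nontrivial linear equation in the $m_i$; since finitely many proper hyperplanes cannot cover the full-dimensional cone of regular dominant weights, I can select positive integers $m_i$ giving a regular dominant integral $\chi$ off all of them. Then $\mathcal{L}(\chi)$ is ample on $G/B$ and $\mu^{\mathcal{L}(\chi)}(x,\nu)\neq 0$ for all $x\in G/B$ and all $\nu\in A$.

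The main obstacle is the weight analysis in part (1): one must justify carefully that $u$ acts as a unipotent operator fixing the leading weight vector $n_w v_\chi$ modulo terms of strictly higher $T$-weight, so that this extremal term is never cancelled and every other contribution has strictly larger $\nu$-pairing. Once this is established, dominance of $\nu$ forces the minimum at $w(\chi)$, and part (2) reduces to the routine genericity argument of avoiding finitely many proper hyperplanes in the space of dominant weights.
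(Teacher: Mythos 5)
Your proposal is correct, but there is nothing in the paper to compare it against line by line: the paper does not prove this lemma at all. It is quoted verbatim from Seshadri \cite[Lemma 5.1]{CSS}, with only Remark \ref{rem} added to account for the sign discrepancy coming from using the left action of $P$ on $G/P$ rather than the right action used by Seshadri. So you have supplied a self-contained proof where the paper supplies a citation, and your argument is essentially the standard one underlying Seshadri's lemma. For part (1), lifting $x=bwP/P$ to $\hat{x}=b\cdot n_w v_\chi$ in $V(\chi)=H^0(G/P,\mathcal{L}(\chi))^*$, observing that $u\in U$ fixes the extremal vector $n_w v_\chi$ modulo strictly higher $T$-weights (exponentials of raising operators, valid over $\mathbb{C}$), and pairing against a dominant $\nu$ is exactly the mechanism that produces $\mu^{\mathcal{L}(\chi)}(x,\nu)=-\langle w(\chi),\nu\rangle$; note that possible cancellations inside a single $\nu$-eigenspace are ruled out because distinct $T$-weight components cannot cancel, which your phrase ``the extremal term is never cancelled'' implicitly covers. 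Your choice of embedding $gP\mapsto[g\cdot v_\chi]$ (highest-weight orbit) is precisely the convention under which the paper's sign, as fixed in Remark \ref{rem}, comes out right, so your computation is consistent with how the formula is used later (e.g.\ in Lemma \ref{ss-general} and Proposition \ref{prop:semilocus}). For part (2), the reduction of an arbitrary nontrivial $\nu\in A$ to a dominant $\nu^+$ via Mumford's equivariance $\mu^{\mathcal{L}}(x,\nu)=\mu^{\mathcal{L}}(gx,g\nu g^{-1})$, followed by the observation that the finitely many conditions $\langle\chi,w^{-1}\nu^+\rangle=0$ ($w\in W$, $\nu\in A$) are proper hyperplanes that cannot cover the lattice points of the open dominant cone, is a complete and correct genericity argument; the resulting regular dominant $\chi$ gives an ample $\mathcal{L}(\chi)$ on $G/B$ with the required nonvanishing. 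The one benefit of your route is that it makes the convention-dependence explicit, which the paper handles only by remark; the cost is length for what the paper treats as a known black box.
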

	
\begin{remark} \label{rem} In the above lemma, the sign is negative because we are using the left action of $P$ on $G/P$ while in \cite[Lemma 5.1]{CSS} the action is on the right. By imitating the
proof for the opposite Borel $B^-$, for $x \in B^-wP/P, w \in W^J$ and for every $\nu\in \overline{C(B)}$ we have \[\mu^{\mathcal{L}(\chi)}(x,-\nu)=\langle w(\chi),\nu\rangle.\]
    
\end{remark}


\section{Structure of the quotient}
In this section, we describe the structure of the GIT quotient and give a stratification of the quotient in terms of orbits.

\subsection{} Throughout we work with the special linear group $SL(n, \mathbb{C})$. Since $SL(n, \mathbb{C})$ is not of adjoint type, for any $1 \leq s \leq n-1$, we work with the one-parameter subgroup $\lambda: \mathbb G_m \rightarrow T \subset SL(n, \mathbb{C})$, with the following parameterization.
\[
\lambda(t) = \operatorname{diag}(\underbrace{t^{n-s}, \dots, t^{n-s}}_{s \text{ times}}, \underbrace{t^{-s}, \dots, t^{-s}}_{n-s \text{ times}}).
\]
Note that $\lambda=n\lambda_s$, where $\lambda_s$ is the coweight correspond to the fundamental weight $\varpi_s$. The centralizer $ C_{SL(n,\mathbb{C})}(\lambda)$ of $\lambda$ is given by
\[
\left\{ 
\begin{pmatrix}
A_1 & 0 \\
0 & A_2
\end{pmatrix} \in SL(n, \mathbb{C}) \,\middle|\,
A_1 \in GL(s, \mathbb{C}),\, A_2 \in GL(n-s, \mathbb{C}),\, \det(A_1)\det(A_2) = 1 
\right\}.
\]
In other words, the centralizer is isomorphic to the group
\[
S\big(GL(s, \mathbb{C}) \times GL(n-s, \mathbb{C})\big).
\]
The semi-simple part of the centralizer is $H :=SL(s, \mathbb{C}) \times SL(n-s, \mathbb{C})$.

Let $\mathcal L(\varpi_r)$ be the line bundle associated with the fundamental weight $\varpi_r$. Set
$$
X := \lambda \gitquot G(r,n)_{\lambda}^{ss}(\mathcal{L}(\varpi_r)).
$$
Then, we have a natural action of $H$ on $X$ induced from the standard action of $SL(n, \mathbb C)$ on $G(r,n)$. So we study the structure of $X$ as a $H$-variety.

\subsection{Sphericality of $X$} Let $G$ be an arbitrary connected reductive group.  Recall that a normal $G$-variety $Z$ is called \emph{spherical} if the action of a Borel subgroup $B$ on $Z$ has a dense open orbit.

We have the following result from \cite[Lemma 5.4]{RP14}.
\begin{lemma}\label{lem:double-flag-spherical}
Let $G$ be a connected reductive group.  
Let $P, Q \subseteq G$ be parabolic subgroups and let $K$ be a Levi subgroup of $P$.  
Then the following conditions are equivalent:
\begin{enumerate}
  \item $G/Q$ is a $K$-spherical variety,
  \item $G/P \times G/Q$ is a spherical $G$-variety with respect to the diagonal action of $G$.
\end{enumerate}
\end{lemma}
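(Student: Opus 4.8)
The plan is to realize the double flag variety as an associated fibre bundle and to reduce the sphericality question to a single fibre. First I would record the $G$-equivariant isomorphism $G/P \times G/Q \cong G \times_P (G/Q)$, where $P$ acts on $G/Q$ by left translation and $G$ acts by left multiplication on the first factor, the map being $[g, hQ] \mapsto (gP, ghQ)$. Since both $G/P \times G/Q$ and $G/Q$ are smooth, the normality requirement in the definition of sphericality is automatic, so throughout it suffices to test for the existence of a dense orbit of a Borel subgroup.

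Fix a Borel subgroup $B \subseteq P$ of $G$. Because $B$ already has a dense orbit $\Omega$ on $G/P$ (the open Schubert cell), any dense $B$-orbit on $G \times_P (G/Q)$ must project to a dense subset of $G/P$, hence meet $\Omega$, and a standard argument on $B$-stable subsets of a single orbit then shows it lies entirely over $\Omega$; conversely a dense $B$-orbit over $\Omega$ is dense in the whole variety. The second step is therefore to restrict the bundle to $\Omega$. Choosing the base point $x = \dot w_0 P/P$ of the open cell, one has $\Omega \cong B/B_x$ with $B_x = B \cap \dot w_0 P \dot w_0^{-1}$, and the restricted bundle is $B$-equivariantly $B \times_{B_x} (G/Q)$. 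Orbits of $B$ on such an associated bundle are in bijection with orbits of $B_x$ on the fibre $G/Q$, and a dense orbit exists on one side precisely when it does on the other. Hence $G/P \times G/Q$ is $G$-spherical if and only if $B_x$ has a dense orbit on $G/Q$ under left translation.

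The heart of the argument, and the step I expect to be the main obstacle, is the identification of the subgroup $B_x$. Using the root-space description of $\operatorname{Lie}(P_I)$ and conjugating by $\dot w_0$, one checks that $B \cap \dot w_0 P \dot w_0^{-1}$ is exactly a Borel subgroup of the standard Levi $L_{I'}$ attached to the simple roots $I' = -w_0(I)$; the key combinatorial inputs are the equality $w_0(R_I^{-}) = R_{I'}^{+}$ (where $R_I^{\pm}$ denote the positive and negative roots in the span of $I$), together with $B \cap \dot w_0 B \dot w_0^{-1} = T$. Thus $B_x$ is a Borel subgroup of the reductive group $L_{I'}$, so the condition ``$B_x$ has a dense orbit on $G/Q$'' is precisely the statement that $G/Q$ is $L_{I'}$-spherical.

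It remains to replace $L_{I'}$ by the prescribed Levi $K$ of $P$. Since $\dot w_0 L_I \dot w_0^{-1} = L_{w_0(I)} = L_{I'}$, the group $L_{I'}$ is $G$-conjugate to the standard Levi $L_I$ of $P$; moreover sphericality of $G/Q$ is invariant under replacing the acting group by a conjugate, because left multiplication by the conjugating element is an automorphism of $G/Q$ intertwining the two actions. As any two Levi subgroups of $P$ are conjugate, the resulting condition ``$G/Q$ is $K$-spherical'' is independent of the chosen Levi, and reading the chain of equivalences in both directions completes the proof.
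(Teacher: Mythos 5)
The paper does not actually prove this lemma: it is imported wholesale as \cite[Lemma 5.4]{RP14} and used as a black box, so there is no internal argument to measure yours against. Your proof is correct and self-contained, and it follows the classical reduction (going back to Brion and Littelmann) that underlies the cited result. All the essential steps check out: the $G$-equivariant identification $G/P\times G/Q\cong G\times_P(G/Q)$; the fact that a dense $B$-orbit must lie over the open cell $\Omega=B\dot w_0P/P$ (since the image of a $B$-orbit under the $B$-equivariant projection is a single Schubert cell, which density forces to be the open one); the matching of dense $B$-orbits on the restriction over $\Omega\cong B/B_x$ with dense $B_x$-orbits on the fibre; the computation $B\cap\dot w_0P_I\dot w_0^{-1}=B\cap L_{I'}$, $I'=-w_0(I)$, which is indeed a Borel subgroup of $L_{I'}$ (its roots are $R^+\cap\bigl(R^-\sqcup R_{I'}^+\bigr)=R_{I'}^+$); and the final replacement of $L_{I'}$ by an arbitrary Levi $K$ of $P$ via $\dot w_0L_I\dot w_0^{-1}=L_{I'}$, conjugacy of Levi subgroups, and conjugation-invariance of sphericality. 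One remark on a point you handled implicitly: the assertion that $B_x$ acts on the fibre ``by left translation'' is correct only for the identification of the fibre over $\dot w_0P/P$ with $G/Q$ obtained by restricting the product decomposition, i.e. $[\dot w_0,hQ]\mapsto\dot w_0hQ$; with the naive identification $[\dot w_0,hQ]\mapsto hQ$ the action is instead through the conjugate $\dot w_0^{-1}B_x\dot w_0=B^-\cap L_I$, which is already a Borel subgroup of the standard Levi $L_I$ of $P$ — using that identification would let you skip the detour through $I'$ and the final conjugation step altogether. Relative to the paper, your argument buys a proof that is independent of the external reference and elementary throughout; the citation buys the paper brevity and the precise generality established in \cite{RP14}.
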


For any simple, simply connected group $G$, Littelmann classified the pairs of parabolic subgroups $(P, Q)$ for which $G/P \times G/Q$ is a spherical $G$-variety with respect to the diagonal action of $G$ (see \cite[p.~144, Table~I]{Peter}). For $G = SL(n, \mathbb{C})$, we  have the following result.

\begin{theorem}\label{thm:Littelmann}
 For any two maximal parabolic subgroups $P, Q$ of $G=SL(n, \mathbb C)$, the space $G/P \times G/Q$ is a spherical $G$-variety with respect to the diagonal action of $G$. 
\end{theorem}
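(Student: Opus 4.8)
The plan is to deduce the claim directly from Lemma~\ref{lem:double-flag-spherical} together with the classification of spherical Levi actions on Grassmannians. By that lemma, for two maximal parabolics $P,Q$ of $G=SL(n,\mathbb C)$, asserting that $G/P\times G/Q$ is a spherical $G$-variety is equivalent to asserting that $G/Q$ is a $K$-spherical variety, where $K$ is a Levi subgroup of $P$. So I would first reduce the statement to showing the following: if $P=P_{S\setminus\{\alpha_s\}}$ and $Q=P_{S\setminus\{\alpha_r\}}$ are the maximal parabolics corresponding to the simple roots $\alpha_s$ and $\alpha_r$, then the Levi subgroup $K=S(GL(s,\mathbb C)\times GL(n-s,\mathbb C))$ of $P$ acts on $G/Q=G(r,n)$ with a dense open orbit of a Borel subgroup of $K$.

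The key step is then the verification of this $K$-sphericality, and here I would argue concretely. The Levi $K$ is (up to center) $GL(s,\mathbb C)\times GL(n-s,\mathbb C)$ acting on $\mathbb C^n=\mathbb C^s\oplus\mathbb C^{n-s}$ block-diagonally, and hence on the Grassmannian $G(r,n)$ of $r$-planes. A clean way to count orbits and identify a dense one is to use the fact that a $K$-orbit on $G(r,n)$ is determined by the pair of dimensions $(\dim(V\cap\mathbb C^s),\dim(V\cap\mathbb C^{n-s}))$ together with the relative position of the images of $V$ in the two factors; but because $GL(s)$ and $GL(n-s)$ act transitively on flags in their respective spaces, the orbit is in fact completely determined by the single integer $a=\dim(V\cap\mathbb C^s)$, which ranges over $\max(0,r-(n-s))\le a\le\min(r,s)$. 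This gives only finitely many $K$-orbits on $G(r,n)$; a connected reductive group acting with finitely many orbits on a normal variety automatically has a dense orbit, and a dense reductive-group orbit forces a dense Borel orbit, yielding $K$-sphericality. Alternatively, and more in line with Littelmann's classification, I would simply invoke \cite[p.~144, Table~I]{Peter}, which lists $(A_{n-1}$-type) the pairs of maximal parabolics of $SL(n,\mathbb C)$; every such pair $(\varpi_s,\varpi_r)$ appears, so the double flag variety is spherical for all choices of $s$ and $r$.

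I expect the main obstacle to be not the logical structure but the precise bookkeeping in the orbit count: one must check that the relative position of $V$ with respect to the two coordinate subspaces is indeed rigidified once $a$ is fixed, i.e.\ that there is no further modulus coming from how $V$ sits diagonally across the decomposition. The cleanest resolution is to write $V$ as the graph data of linear maps between subquotients and observe that $K$ acts transitively on each piece, absorbing any apparent modulus. For the write-up, however, the most efficient route is to cite Littelmann's table, noting that for type $A_{n-1}$ \emph{every} pair of maximal parabolics yields a spherical double flag variety, and then invoke Lemma~\ref{lem:double-flag-spherical} to transfer this to the equivalent $K$-sphericality statement that is needed in the sequel. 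I would therefore present the proof as a two-line deduction from Theorem's hypotheses via Littelmann's classification and Lemma~\ref{lem:double-flag-spherical}, relegating the explicit orbit count to a remark for the reader who prefers a self-contained argument.
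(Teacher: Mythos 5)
Your final write-up---cite Littelmann's Table~I for type $A_{n-1}$, every pair of maximal parabolics of $SL(n,\mathbb C)$ appearing there---is exactly what the paper does: the theorem is stated as a direct consequence of Littelmann's classification, with no independent proof given. Note, though, that your reduction via Lemma~\ref{lem:double-flag-spherical} runs in the opposite direction from the paper's logic: the paper takes the theorem as input and uses the lemma afterwards (in Proposition~\ref{prop:sphericality}) to \emph{deduce} that $G(r,n)$ is spherical under the Levi $K$, whereas you propose to \emph{prove} the theorem by establishing that Levi-sphericality concretely. That would be a genuinely different, self-contained route---if the concrete argument were correct.

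It is not, as written, and the flaw is the step ``a dense reductive-group orbit forces a dense Borel orbit, yielding $K$-sphericality.'' This implication is false: a connected nonabelian reductive group acting on itself by left translation is transitive (one orbit, hence dense and finitely many) but not spherical; closer to the present setting, $SL(2,\mathbb C)$ acts diagonally on $(\mathbb P^1)^3$ with finitely many orbits, one of them dense, yet $(\mathbb P^1)^3$ has dimension $3$ while a Borel of $SL(2,\mathbb C)$ has dimension $2$, so no Borel orbit can be dense. Sphericality requires density (equivalently, by the Brion--Vinberg theorem, finiteness) of \emph{Borel} orbits, and finiteness of $K$-orbits does not imply it. There is also a smaller bookkeeping error: the $K$-orbit of $V\in G(r,n)$ is determined not by the single integer $a=\dim(V\cap\mathbb C^s)$ but by the pair $\bigl(a,\,b\bigr)$ with $b=\dim(V\cap\mathbb C^{n-s})$; for instance in $G(2,4)$ with $s=2$ the planes $\langle e_1,e_3\rangle$ and $\langle e_1,e_2+e_4\rangle$ both have $a=1$ but lie in different orbits. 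This still yields finitely many $K$-orbits, so finiteness survives, but it confirms that the ``rigidification'' you were worried about is subtler than claimed. To repair the self-contained argument you would have to work with the Borel $B_K$ of $K$ directly---for example, exhibit a point of $G(r,n)$ whose $B_K$-orbit has full dimension via a tangent-space computation, or bound the number of $B_K$-orbits---at which point it is no longer a short remark. As it stands, keep the citation to Littelmann and drop the sketch, or fix it along these lines before including it.
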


\begin{proposition}\label{prop:sphericality}
The GIT quotient $X$ is a spherical $H$-variety.
\end{proposition}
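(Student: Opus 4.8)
The plan is to reduce the assertion to the sphericality of the Grassmannian as a variety for the reductive centralizer $L := C_{SL(n,\mathbb C)}(\lambda) = S(GL(s,\mathbb C) \times GL(n-s,\mathbb C))$, and then to push this property down through the GIT quotient map. The starting observation is that $G(r,n) = G/P_r$ with $P_r = P_{S \setminus \{\alpha_r\}}$ maximal, while $L$ is exactly the standard Levi subgroup of the maximal parabolic $P_s := P_{S \setminus \{\alpha_s\}}$. Since $P_s$ and $P_r$ are both maximal parabolics of $SL(n,\mathbb C)$, Theorem \ref{thm:Littelmann} tells us that $G/P_s \times G/P_r$ is a spherical $G$-variety for the diagonal action. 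Applying Lemma \ref{lem:double-flag-spherical} with the lemma's parabolic ``$P$'' taken to be $P_s$ (so its Levi ``$K$'' is $L$) and its parabolic ``$Q$'' taken to be $P_r$, the stated equivalence gives that $G(r,n) = G/P_r$ is an $L$-spherical variety. In particular, a Borel subgroup $B_L \subseteq L$ has a dense open orbit $\mathcal{O} \subseteq G(r,n)$.

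Next I would transfer this dense orbit into the semistable locus. The set $G(r,n)^{ss}_\lambda(\mathcal L(\varpi_r))$ is open in the irreducible variety $G(r,n)$, nonempty (by the existence of the minimal Schubert variety $X(w_{s,r})$ from \cite{GK}), and stable under $L$ since $L$ commutes with $\lambda$; in particular it is $B_L$-stable. Hence $\mathcal{O} \cap G(r,n)^{ss}_\lambda(\mathcal L(\varpi_r))$ is a nonempty $B_L$-stable subset of the single orbit $\mathcal{O}$, which forces $\mathcal{O} \subseteq G(r,n)^{ss}_\lambda(\mathcal L(\varpi_r))$. Thus $\mathcal{O}$ is a dense $B_L$-orbit lying entirely inside the semistable locus.

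Finally I would descend through the quotient. Let $\pi : G(r,n)^{ss}_\lambda(\mathcal L(\varpi_r)) \to X$ denote the surjective, $L$-equivariant quotient morphism. Then $\pi(\mathcal{O}) = B_L \cdot \pi(x_0)$ is a single $B_L$-orbit, and it is dense in $X$ because $\mathcal{O}$ is dense and $\pi$ is dominant. Now write $B_L = B_H \cdot \lambda(\mathbb G_m)$, where $B_H$ is a Borel subgroup of $H = SL(s,\mathbb C) \times SL(n-s,\mathbb C)$ and $\lambda(\mathbb G_m)$ is the connected central torus of $L$; since $\lambda(\mathbb G_m)$ has been quotiented out, it acts trivially on $X$, so the $B_L$- and $B_H$-orbits coincide on $X$. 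Consequently $\pi(\mathcal{O})$ is a dense $B_H$-orbit. As $X$ is normal, being a GIT quotient of the smooth (hence normal) semistable locus, this exhibits $X$ as a spherical $H$-variety.

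The main obstacle is the transfer step: one must ensure that the dense $B_L$-orbit of $G(r,n)$ actually lies inside the semistable locus rather than merely meeting it. This rests on the irreducibility of $G(r,n)$ together with the $B_L$-stability and nonemptiness of the semistable locus; once these are in place, a nonempty stable subset of a single orbit must be the whole orbit. The descent is then formal, the only bookkeeping being the identification of $\lambda(\mathbb G_m)$ with precisely the part of $B_L$ not contained in $B_H$, so that passing to the quotient collapses $B_L$-orbits to $B_H$-orbits without loss.
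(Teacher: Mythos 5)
Your proof is correct and takes essentially the same route as the paper: both deduce from Theorem \ref{thm:Littelmann} together with Lemma \ref{lem:double-flag-spherical} that $G(r,n)$ is spherical for the Levi $C_G(\lambda)$ of $P_{S\setminus\{\alpha_s\}}$, and then descend sphericality to the quotient $X$. The only difference is that the paper states this descent in one line, whereas you supply the verification (the dense $B_L$-orbit lies inside the $L$-stable semistable locus, its image is a dense orbit in $X$, and the central torus $\lambda(\mathbb{G}_m)$ acts trivially after quotienting, so the dense $B_L$-orbit is in fact a dense $B_H$-orbit on the normal variety $X$), which fills in exactly the details the paper leaves implicit.
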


\begin{proof}
Consider the maximal parabolic subgroups $P_{S\setminus \{\alpha_r\}}$ and $P_{S\setminus \{\alpha_s\}}$ of $G=SL(n, \mathbb C)$. By Theorem \ref{thm:Littelmann} together with Lemma \ref{lem:double-flag-spherical}, it follows that the Grassmannian $G(r, n)$ is spherical for the Levi subgroup $L_{P_{S\setminus \{\alpha_s\}}}= C_{G}(\lambda)$ of $P_{S\setminus \{\alpha_s\}}$. Therefore, we conclude that $X$ is a $H$ -spherical variety.  
\end{proof}

We have the following duality result which enable us to reduce the study of the quotient to $r \leq \lfloor\frac{n}{2}]$. 

\begin{proposition}\label{Prop:dual}
Let $\mathcal L(\varpi_r)$ (resp. $\mathcal L(\varpi_{n-r})$ be the line bundles on $G(r,n)$ (resp. $G(n-r,n)$) associated to the weights $\varpi_r$ (resp. $\varpi_{n-r}$). Let $\lambda'=n\lambda_{n-s}$. Then we have
$$\lambda \gitquot G(r,n)_{\lambda}^{ss}(\mathcal{L}(\varpi_r)) \simeq \lambda' \gitquot G(n-r,n)_{\lambda'}^{ss}(\mathcal{L}(\varpi_{n-r})).$$
\end{proposition}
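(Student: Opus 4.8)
The plan is to realize the asserted isomorphism through the nontrivial diagram automorphism of $SL(n,\mathbb{C})$ combined with classical Grassmann duality. Let $\theta : SL(n,\mathbb{C}) \to SL(n,\mathbb{C})$ be the automorphism $\theta(g) = J\,(g^{-1})^{T}\,J^{-1}$, where $J$ is the anti-diagonal permutation matrix (with $1$'s on the anti-diagonal). This is the outer automorphism associated with the symmetry of the Dynkin diagram of type $A_{n-1}$; it preserves $T$ and $B$, acts on simple roots by $\alpha_i \mapsto \alpha_{n-i}$, hence on fundamental weights by $\varpi_i \mapsto \varpi_{n-i}$ and on fundamental coweights by $\lambda_i \mapsto \lambda_{n-i}$. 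Geometrically $\theta$ descends to the duality $V \mapsto V^{\perp}$ identifying $G(r,n)$ with $G(n-r,n)$, and the whole statement is an equivariance bookkeeping for this map.

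First I would record the three compatibilities that make the argument run. Since $\theta(S\setminus\{\alpha_r\}) = S\setminus\{\alpha_{n-r}\}$, the automorphism carries $P_{S\setminus\{\alpha_r\}}$ to $P_{S\setminus\{\alpha_{n-r}\}}$ and therefore induces an isomorphism of varieties
\[
\bar\theta : G(r,n) = G/P_{S\setminus\{\alpha_r\}} \;\xrightarrow{\ \sim\ }\; G/P_{S\setminus\{\alpha_{n-r}\}} = G(n-r,n),
\]
which is $\theta$-equivariant, i.e. $\bar\theta(g\cdot x) = \theta(g)\cdot\bar\theta(x)$. Because $\theta(\varpi_{n-r}) = \varpi_{r}$, the Plücker bundle $\mathcal{L}(\varpi_{n-r})$ on $G(n-r,n)$ pulls back under $\bar\theta$ to $\mathcal{L}(\varpi_{r})$, with $G$-linearization twisted by $\theta$. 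The key point is the identity $\theta\circ\lambda = \lambda'$: since $\lambda$ is diagonal, $(\lambda(t)^{-1})^{T}=\operatorname{diag}(t^{-(n-s)},\dots,t^{-(n-s)},t^{s},\dots,t^{s})$, with $s$ entries $t^{-(n-s)}$ followed by $n-s$ entries $t^{s}$, and conjugation by $J$ reverses the order of these entries, yielding exactly $\lambda'(t) = n\lambda_{n-s}(t) = \operatorname{diag}(t^{s},\dots,t^{s},t^{-(n-s)},\dots,t^{-(n-s)})$, with $n-s$ entries $t^{s}$ followed by $s$ entries $t^{-(n-s)}$. This is also forced formally by $\theta(\lambda_s)=\lambda_{n-s}$.

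Specializing the equivariance of $\bar\theta$ to the one-parameter subgroup gives $\bar\theta(\lambda(t)\cdot x) = \theta(\lambda(t))\cdot\bar\theta(x) = \lambda'(t)\cdot\bar\theta(x)$, so $\bar\theta$ intertwines the $\mathbb{G}_m$-action through $\lambda$ on $G(r,n)$ with that through $\lambda'$ on $G(n-r,n)$, and the $\theta$-twist converts the $\lambda'$-linearization on $\mathcal{L}(\varpi_{n-r})$ into the $\lambda$-linearization on $\mathcal{L}(\varpi_{r})$. By functoriality of GIT under a linearized equivariant isomorphism, $\bar\theta$ carries $G(r,n)^{ss}_{\lambda}(\mathcal{L}(\varpi_r))$ isomorphically onto $G(n-r,n)^{ss}_{\lambda'}(\mathcal{L}(\varpi_{n-r}))$ and descends to the desired isomorphism of quotients. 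Alternatively, the matching of semistable loci can be verified directly from the Hilbert--Mumford computation of Lemma \ref{CSS}, using $\langle\theta(\chi),\theta(\nu)\rangle = \langle\chi,\nu\rangle$. I do not expect a genuine obstacle; the only care needed is the cocharacter computation $\theta\circ\lambda = \lambda'$ and the verification that semistability is preserved because the relevant linearization, not merely the underlying variety, is transported by $\theta$.
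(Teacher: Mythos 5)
Your proof is correct and takes essentially the same approach as the paper: your $\bar\theta$ is precisely the composite $x \mapsto w_0\cdot\phi(x)$ of the paper's duality map $\phi\colon V \mapsto V^{\perp}$ with translation by a representative of $w_0$ (conjugation by $J$), and your identities $\theta\circ\lambda=\lambda'$ and $\theta^{*}\varpi_{n-r}=\varpi_{r}$ are exactly the paper's computations $-w_0(\varpi_r)=\varpi_{n-r}$ and $-w_0(\lambda_s)=\lambda_{n-s}$, since $-w_0$ is the diagram automorphism in type $A_{n-1}$. The only difference is presentational: the paper matches semistable loci by the Hilbert--Mumford criterion while you invoke functoriality of GIT under the $\theta$-equivariant isomorphism with transported linearization, and your phrasing via $\theta$-equivariance makes precise the paper's slightly loose claim that $\phi$ is ``$G$-equivariant.''
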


\begin{proof}
Consider the natural duality isomorphism:
\[
\phi:G(r,n) \rightarrow G(n-r,n), \quad V \mapsto V^\perp,
\] where $V^\perp \subset \mathbb{C}^n$ denotes the orthogonal complement of $V$ with respect to the standard bilinear form.  Under this duality the line bundle \( \mathcal{L}(\varpi_r) \) corresponds naturally to \( \mathcal{L}(\varpi_{n-r}) \), $\varpi_r \mapsto -w_0(\varpi_r)=\varpi_{n-r}$ and $\lambda_s \mapsto -w_0(\lambda_s)=\lambda_{n-s}$, where $w_0$ is the longest element of the Weyl group $W$. Using Hilbert-Mumford numerical criterion, we get that \[
x \in G(r,n)^{ss}_{\lambda}(\mathcal{L}(\varpi_r)) \,\, \text{if and only if} \,\, w_0.\phi(x)\in G(n-r,n)^{ss}_{\lambda'}(\mathcal{L}(\varpi_{n-r})).
\]
Therefore, the above isomorphism restricts to an isomorphism of semistable loci, and since the duality isomorphism is $G$-equivariant and carries the $\lambda$–action on $G(r,n)$ to the $\lambda'$–action on $G(n-r,n)$, it descends to an isomorphism of the corresponding GIT quotients:
\[
\lambda \gitquot G(r,n)^{ss}_{\lambda}(\mathcal{L}(\varpi_r)) \cong \lambda' \gitquot G(n-r,n)^{ss}_{\lambda'}(\mathcal{L}(\varpi_{n-r})).
\]
This completes the proof. \end{proof}

 \subsection{The structure of $X$}

 We first recall that by \cite[Lemma 3.8]{GK}, there exists a unique minimal Schubert variety for which the semistable locus is non-empty, denoted by $X(w_{s,r})$, where \[w_{s,r}=(s_s \cdots s_{p+1})(s_{s+1}\cdots s_{p+2})\cdots (s_{s+r-p-1}\cdots s_r) \,\, \text{and} \,\, p=\lfloor \frac{rs}{n} \rfloor.\] Note that $0 \leq p \leq n-2$.

  The following lemma describes the semi-stable locus of the Grassmannian $G(r,n)$ with respect to the line bundle $\mathcal{L}(\varpi_r)$ under the $\lambda$-action. The description is via the semi-stable locus of $X(w_{s,r})$.
   Define \[
   A:=\Big\{(v, \varphi)\in W^{S\setminus\{\alpha_r\}}\times W^{S\setminus\{\alpha_r\}}~:~ v\ngeq w_{s,r}, \varphi\geq w_{s,r} ~\text{and}~ v \leq \varphi \Big\}. 
      \] 
    For $w\in W^{S\setminus \{\alpha_r\}}$ such that $w\geq w_{s,r}$, we define
      \[
      A_{w}:=\Big\{(v, \varphi)\in A ~:~ \varphi \leq w \Big\}.
      \]
      Then, 
\begin{lemma}\label{ss-general} Let $R_{v, \varphi}:=(U^{-}_vvP/P) \cap (U_{\varphi}\varphi P/P)$ be the Richardson cell. 
\begin{enumerate}
    \item We have \[G(r,n)_{\lambda}^{ss}(\mathcal{L}(\varpi_r))=\sqcup_{(v, \varphi)\in A}R_{v, \varphi}.\]
    \item For $w\in W^{S\setminus \{\alpha_r\}}$ with $w\geq w_{s,r}$, we have 
    \[X(w)_{\lambda}^{ss}(\mathcal{L}(\varpi_r))=\sqcup_{(v, \varphi)\in A_w}R_{v, \varphi}.\]
\end{enumerate}
  
\end{lemma}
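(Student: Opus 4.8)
The plan is to reduce $\lambda$-semistability to two numerical inequalities, show these are constant on Richardson cells, and then translate them into the Bruhat-order conditions defining $A$. Write $P = P_{S\setminus\{\alpha_r\}}$, so that $G(r,n) = G/P$. Since the acting group is the one-parameter subgroup $\lambda \cong \mathbb{G}_m$, its only nontrivial one-parameter subgroups are the integer powers of $\lambda$, and $\mu^{\mathcal{L}(\varpi_r)}(x, \lambda^k) = k\,\mu^{\mathcal{L}(\varpi_r)}(x,\lambda)$ for $k>0$ while $\mu^{\mathcal{L}(\varpi_r)}(x,\lambda^{-k}) = k\,\mu^{\mathcal{L}(\varpi_r)}(x,-\lambda)$. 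Hence by the Hilbert--Mumford criterion (Theorem \ref{thm:Hilbert-Mumford}) a point $x$ is semistable if and only if $\mu^{\mathcal{L}(\varpi_r)}(x,\lambda) \geq 0$ and $\mu^{\mathcal{L}(\varpi_r)}(x,-\lambda) \geq 0$.

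Next I would evaluate these on each Richardson cell. As $\lambda = n\lambda_s \in \overline{C(B)}$ is dominant, Seshadri's lemma applies: for $x \in R_{v,\varphi}$ we may write $x = b\varphi P/P$ with $b \in B$ and also $x = b^- vP/P$ with $b^- \in B^-$, whence Lemma \ref{CSS}(1) gives $\mu^{\mathcal{L}(\varpi_r)}(x,\lambda) = -\langle\varphi(\varpi_r),\lambda\rangle$ and Remark \ref{rem} gives $\mu^{\mathcal{L}(\varpi_r)}(x,-\lambda) = \langle v(\varpi_r),\lambda\rangle$. In particular both functions are constant along $R_{v,\varphi}$, depending only on the pair $(v,\varphi)$. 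Since $G/P = \bigsqcup_{v \leq \varphi} R_{v,\varphi}$ is the disjoint decomposition into Richardson cells, the semistable locus is automatically a union of such cells, and $R_{v,\varphi}$ lies in it exactly when $\langle\varphi(\varpi_r),\lambda\rangle \leq 0$ and $\langle v(\varpi_r),\lambda\rangle \geq 0$.

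The crux is to convert these two inequalities into $\varphi \geq w_{s,r}$ and $v \ngeq w_{s,r}$. Identifying $w \in W^{S\setminus\{\alpha_r\}}$ with the $r$-subset $I_w \subseteq \{1,\dots,n\}$ it indexes and setting $a_w = |I_w \cap \{1,\dots,s\}|$, the explicit form of $\lambda$ gives $\langle w(\varpi_r),\lambda\rangle = a_w\,n - rs$, so that $\langle\varphi(\varpi_r),\lambda\rangle \leq 0 \iff a_\varphi \leq rs/n$ and $\langle v(\varpi_r),\lambda\rangle \geq 0 \iff a_v \geq rs/n$. The quantity $a_w$ is weakly decreasing along the Bruhat order, so these two half-spaces form an up-set and a down-set respectively; the combinatorial analysis of \cite[Lemma 3.8]{GK} pins down $w_{s,r}$ as the threshold element with $a_{w_{s,r}} = p = \lfloor rs/n\rfloor$, yielding $\langle\varphi(\varpi_r),\lambda\rangle \leq 0 \iff \varphi \geq w_{s,r}$ and $\langle v(\varpi_r),\lambda\rangle \geq 0 \iff v \ngeq w_{s,r}$. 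Together with the nonemptiness constraint $v \leq \varphi$, this is precisely membership in $A$, proving (1). For (2) I would intersect with $X(w) = \overline{BwP/P} = \bigsqcup_{\varphi \leq w} B\varphi P/P$; a Richardson cell $R_{v,\varphi} \subseteq B\varphi P/P$ meets $X(w)$ if and only if $\varphi \leq w$, and then lies entirely inside it, so restricting the decomposition of (1) to $\varphi \leq w$ gives exactly $\bigsqcup_{(v,\varphi)\in A_w} R_{v,\varphi}$.

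The main obstacle is the combinatorial matching in the third step, and in particular its behaviour at the boundary $a_w = rs/n$. When $n \nmid rs$ the relevant inequalities are automatically strict, $\lceil rs/n\rceil = p+1$, and the identifications with $\{\varphi \geq w_{s,r}\}$ and $\{v \ngeq w_{s,r}\}$ hold on the nose. When $n \mid rs$ one must argue with care that $w_{s,r}$ is the unique minimal element of the up-set $\{a_w \leq p\}$, so that the threshold description remains valid, and keep track of the strictly semistable cells on which $\langle v(\varpi_r),\lambda\rangle = 0$. This combinatorial input is exactly what is imported from \cite{GK}; the remaining steps are formal consequences of the Hilbert--Mumford criterion and the Richardson cell decomposition.
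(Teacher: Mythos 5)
Your proposal follows the same route as the paper's own proof: reduce $\lambda$-semistability to the two inequalities $\mu^{\mathcal{L}(\varpi_r)}(x,\lambda)\ge 0$ and $\mu^{\mathcal{L}(\varpi_r)}(x,-\lambda)\ge 0$ (possible because the acting torus is one-dimensional), evaluate both numbers on a Richardson cell via Lemma \ref{SeS2} and Remark \ref{rem} to get $-\langle \varphi(\varpi_r),\lambda\rangle$ and $\langle v(\varpi_r),\lambda\rangle$, and then translate the two sign conditions into the Bruhat conditions $\varphi\ge w_{s,r}$ and $v\ngeq w_{s,r}$ defining $A$; part (2) is obtained in both arguments by intersecting with $X(w)$. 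If anything, you are more explicit than the paper at the only nontrivial step: the paper passes from the numerical inequalities to the Bruhat conditions in a single sentence, while you spell out the dictionary (the $r$-subsets $I_w$, the statistic $a_w=|I_w\cap\{1,\dots,s\}|$, the formula $\langle w(\varpi_r),\lambda\rangle=a_w n-rs$, monotonicity of $a_w$ along the Bruhat order, and the identification of $w_{s,r}$ as the unique minimal element of the up-set $\{w: a_w\le p\}$).

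The caveat you raise in your final paragraph is, however, not a removable technicality: when $n\mid rs$ the equivalence $\langle v(\varpi_r),\lambda\rangle\ge 0 \iff v\ngeq w_{s,r}$ is genuinely false, since $a_v=p=rs/n$ gives $\langle v(\varpi_r),\lambda\rangle=0\ge 0$ while $v\ge w_{s,r}$. In that case the lemma itself fails as stated: for $r=s=2$, $n=4$, the $T$-fixed point $w_{s,r}P/P=\operatorname{span}(e_1,e_3)$ is semistable (the $\lambda$-invariant Pl\"ucker coordinate $p_{13}$ is nonzero there), yet $(w_{s,r},w_{s,r})\notin A$, so the right-hand side of (1) omits it. Thus ``keeping track of the strictly semistable cells'' cannot rescue the asserted equality; for $n\mid rs$ the condition $v\ngeq w_{s,r}$ would have to be replaced by $a_v\ge p$, i.e., $v$ not above the minimal element of $\{w: a_w\le p-1\}$. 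This defect is inherited from the paper, whose proof makes the same unqualified equivalence with no comment at all, so your write-up is no worse off: both arguments are complete and correct exactly when $n\nmid rs$, and this covers every later use of the lemma, because the standing hypothesis $p=0$ or $p=r+s-n\ge 0$ forces $n\nmid rs$ (if $rs/n=r+s-n$ then $(n-r)(n-s)=0$, which is impossible for $1\le r,s\le n-1$).
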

\begin{proof}
   Fix $x\in R_{v, \varphi}$. By Lemma \ref{SeS2} and Remark \ref{rem}, for any $w\in W^{S\setminus \{\alpha_r\}}$ we have, 
   
      \begin{enumerate}
          \item[(i)] for any $y \in B^-wP/P$, we have \[\mu^{\mathcal{L}(\omega_r)}(y,-n\lambda_{s})=\langle w(\omega_{r}),n\lambda_{s}\rangle.\] 
       \item [(ii)] for any $z\in BwP/P$, we have 
       \[\mu^{\mathcal{L}(\omega_r)}(z, n\lambda_{s})= -\langle w(\omega_{r}), n\lambda_{s}\rangle.\] 
\end{enumerate}
Hence, by Theorem \ref{thm:Hilbert-Mumford} (Hilbert-Mumford criterion) and by (i) and (ii), we have 
\[x\in G(r,n)_{\lambda}^{ss}(\mathcal{L}(\varpi_r))\] if and only if 
   \[
   \langle v(\omega_{r}),n\lambda_{s}\rangle \geq 0\quad \text{and}\quad \langle \varphi (\omega_{r}),n\lambda_{s}\rangle \leq 0.\]
   Therefore, we get $$x\in G(r,n)_{\lambda}^{ss}(\mathcal{L}(\varpi_r))\quad\text{if and only if}\quad v\ngeq w_{s,r} ~\text{and}~\varphi\geq w_{s,r}.$$ Hence $R_{(v, \varphi)}$ is non-empty and is contained in $G(r,n)_{\lambda}^{ss}(\mathcal{L}(\varpi_r))$. This proves (1). 
   
   Since $X(w)_{\lambda}^{ss}(\mathcal{L}(\varpi_r))=X(w) \cap G(r,n)_{\lambda}^{ss}(\mathcal{L}(\varpi_r))$, the assertion (2) follows. 
\end{proof}

We then have the following corollary describing the quotient. 

\begin{corollary}
We have an isomorphism of GIT quotients
\[
\lambda \gitquot G(r,n)_{\lambda}^{ss}(\mathcal{L}(\varpi_r))
\simeq \bigsqcup_{(v, \varphi)\in A} \bigl( \lambda \gitquot R_{v, \varphi} \bigr).
\]
\end{corollary}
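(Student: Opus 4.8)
The plan is to obtain the decomposition by pushing the stratification of the semistable locus in Lemma \ref{ss-general}(1) through the quotient map, so that the entire content reduces to checking that this map is compatible with the Richardson stratification. Two structural facts make the argument possible. First, $X=\lambda\gitquot G(r,n)_\lambda^{ss}(\mathcal L(\varpi_r))$ is a good quotient, so the quotient morphism $\pi\colon G(r,n)_\lambda^{ss}(\mathcal L(\varpi_r))\to X$ is surjective and affine, and two semistable points have the same image under $\pi$ if and only if the closures of their $\lambda$-orbits meet inside the semistable locus. Second, each Richardson cell $R_{v,\varphi}=(U^-_v vP/P)\cap(U_\varphi\varphi P/P)$ is an intersection of a Schubert cell and an opposite Schubert cell, both $T$-stable; since $\lambda\subset T$, every $R_{v,\varphi}$ is $\lambda$-stable, and by Lemma \ref{ss-general}(1) it is contained in the semistable locus. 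Thus $\{R_{v,\varphi}\}_{(v,\varphi)\in A}$ is a decomposition of the semistable locus into $\lambda$-invariant locally closed subsets.

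Next I would set $X_{v,\varphi}:=\pi(R_{v,\varphi})$ and aim to show that $\pi$ restricts to a good quotient $R_{v,\varphi}\to X_{v,\varphi}$ identifying $X_{v,\varphi}$ with $\lambda\gitquot R_{v,\varphi}$. Because $\lambda\cong\mathbb G_m$ is connected and each cell is $\lambda$-stable, every $\lambda$-orbit lies entirely inside a single cell, so the one remaining point is that the stratification be $\pi$-saturated, i.e.\ that each cell is a union of fibres of $\pi$. Equivalently, whenever $\overline{\lambda\cdot x}\cap\overline{\lambda\cdot y}\neq\emptyset$ inside the semistable locus, the points $x$ and $y$ lie in the same $R_{v,\varphi}$. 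Granting saturation, the cells map to pairwise disjoint subsets whose union is $X$, and the displayed isomorphism follows immediately.

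The main obstacle is precisely this saturation, which I would attack through the $\lambda$-limits $\lim_{t\to0}\lambda(t)\cdot x$ and $\lim_{t\to\infty}\lambda(t)\cdot x$. For a one-parameter subgroup a fibre of $\pi$ consists of a unique closed orbit together with all orbits degenerating to it, so it suffices to show that a semistable limit of a point of $R_{v,\varphi}$ again lies in $R_{v,\varphi}$; here I would use the Hilbert--Mumford weights of Lemma \ref{SeS2} together with the $T$-stability of the two Bruhat stratifications to check that the conditions $v\ngeq w_{s,r}$ and $\varphi\geq w_{s,r}$ cutting out $A$ are preserved along the flow. I expect a clean dichotomy: when $n\nmid rs$ one has $G(r,n)^{ss}_\lambda(\mathcal L(\varpi_r))=G(r,n)^s_\lambda(\mathcal L(\varpi_r))$ by \cite{GK}, the quotient is geometric, every fibre of $\pi$ is a single orbit contained in one cell, and saturation is immediate; the delicate case is $n\mid rs$, where strictly semistable points appear and distinct cells could a priori be glued through a common semistable $\lambda$-fixed limit, so one must verify that such degenerations still respect the pair $(v,\varphi)$. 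Carrying out this last verification is the heart of the proof.
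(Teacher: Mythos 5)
You set up the argument exactly as the paper does (the cell decomposition of the semistable locus from Lemma \ref{ss-general} together with the $\lambda$-stability of each Richardson cell), and you then correctly isolate the point that the paper's own one-line deduction silently assumes: that each cell is \emph{saturated} for the quotient map $\pi$, i.e.\ a union of fibres. Your treatment of the case $n\nmid rs$ is complete and correct: there semistability equals stability, $\pi$ is a geometric quotient, every fibre is a single $\lambda$-orbit, and every orbit lies in one cell, so the images of the cells are disjoint and the decomposition follows.

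The gap is the case $n\mid rs$, which you explicitly defer (``carrying out this last verification is the heart of the proof'') --- and that verification cannot be carried out, because saturation genuinely fails there. Take $n=4$, $r=s=2$, so $\lambda(t)=\operatorname{diag}(t^2,t^2,t^{-2},t^{-2})$, $p=1$ and $w_{s,r}=s_2$. The point $x=[e_1\wedge(e_2+e_3)]$ has Pl\"ucker coordinates $p_{12}=p_{13}=1$ and all others zero; it is (strictly) semistable, since the $\lambda$-invariant section $p_{13}$ does not vanish at it, and it lies in $R_{v,\varphi}$ with $(v,\varphi)=(e,s_2)\in A$. Its limit $z=\lim_{t\to 0}\lambda(t)\cdot x=[e_1\wedge e_3]$ is again semistable (same invariant section), and every $\lambda$-invariant polynomial in the Pl\"ucker coordinates takes equal values at $x$ and $z$ (any monomial involving $p_{12}$ must involve $p_{34}$ to have weight zero, and $p_{34}$ vanishes at both points), so $\pi(x)=\pi(z)$; but $z$ lies in the cell $R_{s_2,s_2}$, not in $R_{e,s_2}$. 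Thus distinct cells are glued by $\pi$ and their images in the quotient overlap, so no disjoint decomposition of the quotient indexed by cells can hold. Worse, $(s_2,s_2)\notin A$ since $s_2\geq w_{s,r}$, yet $z$ is semistable --- so Lemma \ref{ss-general}(1) itself breaks down at this boundary, the translation of $\langle v(\varpi_r),\lambda\rangle\geq 0$ into $v\ngeq w_{s,r}$ being invalid when $\langle w_{s,r}(\varpi_r),\lambda\rangle=0$. The honest conclusion of your approach is therefore: the corollary holds when $n\nmid rs$ (and your argument proves it in that case), but it is false as stated when $n\mid rs$; the step you postponed is not a missing verification but a false claim, one which the paper's own proof, never raising the saturation issue, also fails to address.
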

\begin{proof}
By Lemma \ref{ss-general}, the semistable locus of the $\lambda$–action on $G(r,n)$
decomposes as a disjoint union of Richardson cells:
\[
G(r,n)_{\lambda}^{ss}(\mathcal{L}(\varpi_r))
= \bigsqcup_{(v, \varphi)\in A} R_{v, \varphi}.
\]
Each $R_{v,\varphi}$ is stable under the action of $\lambda$, since both
$U^-_v vP/P$ and $U_{\varphi}\varphi P/P$ are $\lambda$–invariant subvarieties.
Hence, the quotient of the whole semistable locus
decomposes as the disjoint union of the quotients of its $\lambda$–stable pieces:
\[
\lambda \gitquot G(r,n)_{\lambda}^{ss}(\mathcal{L}(\varpi_r))
\simeq \bigsqcup_{(v, \varphi)\in A} (\lambda \gitquot R_{v, \varphi}).
\]
This establishes the claimed isomorphism.
\end{proof}

Let $\tilde{w} \in W$ be such that $w_0^{S \setminus \{\alpha_r\}}=\tilde ww_{s,r}$ with $l(w_0^{S \setminus \{\alpha_r\}})=l(\tilde w)+l(w_{s,r})$, where $w_0^{S \setminus \{\alpha_r\}}$ is the minimal representative of the longest element $w_0$ relative to the Weyl group $W_{S \setminus \{\alpha_r\}}$ of the maximal parabolic subgroup $P_{S \setminus \{\alpha_r\}}$.

\begin{lemma}\label{lem:extention}
    Let $\tilde{w}$ be as above. Then $s_s\nleq \tilde w$ if and only if either $p=0$ or $p=r+s-n$ with $r+s-n\geq 0$.
\end{lemma}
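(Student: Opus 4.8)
The plan is to translate the Bruhat-order condition $s_s\nleq\tilde w$ into a concrete statement about the permutation $\tilde w\in W=S_n$, and then to read off the answer by computing $\tilde w$ explicitly in one-line notation. The starting point is the identity $\tilde w = w_0^{S\setminus\{\alpha_r\}}\,w_{s,r}^{-1}$, which follows at once from $w_0^{S\setminus\{\alpha_r\}}=\tilde w\,w_{s,r}$ together with the length-additivity $l(w_0^{S\setminus\{\alpha_r\}})=l(\tilde w)+l(w_{s,r})$.

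First I would reduce to a statement about a single subset. For any $w\in W$ the set of simple reflections occurring in a reduced expression of $w$ (its support) is independent of the chosen expression, and by the subword property $s_s\le w$ if and only if $s_s$ lies in this support, i.e. if and only if $w\notin W_{S\setminus\{\alpha_s\}}$. Since $W_{S\setminus\{\alpha_s\}}=S_s\times S_{n-s}$ is precisely the stabilizer in $S_n$ of the set $\{1,\dots,s\}$, this yields
\[
s_s\nleq\tilde w \iff \tilde w\in W_{S\setminus\{\alpha_s\}} \iff \tilde w(\{1,\dots,s\})=\{1,\dots,s\}.
\]
Hence it suffices to compute the subset $\tilde w(\{1,\dots,s\}) = w_0^{S\setminus\{\alpha_r\}}\bigl(w_{s,r}^{-1}(\{1,\dots,s\})\bigr)$.

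The technical heart, and the step I expect to be the main obstacle, is to identify $w_{s,r}$ explicitly from its block reduced word $\prod_{k=0}^{r-p-1}(s_{s+k}s_{s+k-1}\cdots s_{p+1+k})$. I would prove, by induction on the number of blocks $r-p$, that $w_{s,r}$ is the block-swap Grassmannian permutation fixing $\{1,\dots,p\}$ and $\{s+r-p+1,\dots,n\}$ pointwise, sending $i\mapsto i+(s-p)$ on $\{p+1,\dots,r\}$ and $i\mapsto i-(r-p)$ on $\{r+1,\dots,r+s-p\}$. For the inductive step one writes $w_{s,r}=(s_s\cdots s_{p+1})\cdot w'$, recognizes $w'$ as the block-swap permutation for the shifted parameters $(s+1,r,p+1)$, and checks that left multiplication by the descending cycle $s_s\cdots s_{p+1}$ (which sends $p+1\mapsto s+1$ and lowers $\{p+2,\dots,s+1\}$ by one) produces exactly the asserted one-line notation; the resulting inversion count $(r-p)(s-p)$ simultaneously confirms the word is reduced, consistent with \cite{GK}. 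Granting this, the values $\le s$ occur precisely in positions $\{1,\dots,p\}\cup\{r+1,\dots,r+s-p\}$, so $w_{s,r}^{-1}(\{1,\dots,s\})=\{1,\dots,p\}\cup\{r+1,\dots,r+s-p\}$.

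Finally I would apply $w_0^{S\setminus\{\alpha_r\}}$, which sends $i\mapsto n-r+i$ on $\{1,\dots,r\}$ and $i\mapsto i-r$ on $\{r+1,\dots,n\}$, to obtain
\[
\tilde w(\{1,\dots,s\}) = \{1,\dots,s-p\}\cup\{n-r+1,\dots,n-r+p\}.
\]
The two blocks have sizes $s-p$ and $p$, summing to $s$, and the first is contained in $\{1,\dots,s\}$; hence their union equals $\{1,\dots,s\}$ exactly when either $p=0$ (the second block is empty, so the first is all of $\{1,\dots,s\}$) or, for $p\ge 1$, when $\{n-r+1,\dots,n-r+p\}=\{s-p+1,\dots,s\}$, i.e. $n-r=s-p$, that is $p=r+s-n$ (which forces $r+s-n\ge 0$ since $p\ge 0$). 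This is exactly the claimed dichotomy, completing the proof.
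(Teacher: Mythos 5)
Your proof is correct, and it takes a genuinely different route from the paper's. The paper argues entirely at the level of reduced words: it records the standard reduced expression of $w_0^{S\setminus\{\alpha_r\}}$, exhibits an explicit word-level factorization $w_0^{S\setminus\{\alpha_r\}}=\tilde w\,w_{s,r}$ (writing the word for $\tilde w$ separately in the cases $p=0$ and $p\ge 1$), observes that for $p\ge 1$ the letter $s_s$ occurs in that word precisely when $n-r+p>s$, and then invokes the inequality $p\ge r+s-n$, coming from $\frac{rs}{n}-(r+s-n)=\frac{(n-r)(n-s)}{n}\ge 0$, to conclude that non-occurrence is equivalent to $n-r+p=s$, i.e.\ $p=r+s-n$. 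You instead translate everything into one-line permutation data: you convert $s_s\nleq\tilde w$ into the setwise stabilization $\tilde w(\{1,\dots,s\})=\{1,\dots,s\}$ (via support-independence and the subword property), compute $w_{s,r}$ as an explicit block-swap Grassmannian permutation by induction on the number of blocks (with the inversion count $(r-p)(s-p)$ certifying that the word is reduced), apply the standard one-line form of $w_0^{S\setminus\{\alpha_r\}}$, and read the dichotomy off the set identity $\{1,\dots,s-p\}\cup\{n-r+1,\dots,n-r+p\}=\{1,\dots,s\}$. I verified your block-swap formula and the inductive step (left multiplication by the cycle $s_s\cdots s_{p+1}$, which sends $p+1\mapsto s+1$ and lowers $\{p+2,\dots,s+1\}$ by one); both are correct, and your final case analysis matches the lemma exactly. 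The trade-off is this: the paper's factorization of $w_0^{S\setminus\{\alpha_r\}}$ is asserted rather than verified, and its argument needs the floor-function inequality $p\ge r+s-n$ as a separate arithmetic input; in your argument the analogous burden is the inductive computation of $w_{s,r}$, but the case split on $p$ is postponed to the final set comparison and the constraint $p\ge r+s-n$ is absorbed into the well-definedness of the block decomposition. Conversely, the paper's route is shorter once its factorization is granted, and it yields an explicit reduced word for $\tilde w$, which is slightly more information than the stabilization property alone.
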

\begin{proof}
Recall that 
\[
w_{s,r} = (s_s \cdots s_{p+1})(s_{s+1} \cdots s_{p+2}) \cdots (s_{s+r-p-2} \cdots s_{r-1})(s_{s+r-p-1} \cdots s_r),
\]
and
\[
w_0^{S \setminus \{\alpha_r\}} = (s_{n-r} \cdots s_1)(s_{n-r+1} \cdots s_2) \cdots (s_{n-2} \cdots s_{r-1})(s_{n-1} \cdots s_r).
\]

We can write 
\[
w_0^{S \setminus \{\alpha_r\}} = \tilde{w} \cdot w_{s,r},
\]
with 
\[
\tilde{w} =
\begin{cases}
(s_{n-r} \cdots s_{s+1})(s_{n-r+1} \cdots s_{s+2}) \cdots (s_{n-2} \cdots s_{r+s-1})(s_{n-1} \cdots s_{r+s}), & \text{if } p=0, \\
(s_{n-r} \cdots s_1) \cdots (s_{n-r-1+p} \cdots s_p)(s_{n-r+p} \cdots s_{s+1}) \cdots (s_{n-1} \cdots s_{s+r-p}), & \text{if } p \geq 1.
\end{cases}
\]

In the case $p=0$, it is clear that $s_s$ does not appear in $\tilde{w}$. 

For $p \ge 1$, the simple reflection $s_s$ appears in $\tilde{w}$ if and only if $n - r + p > s$, hence $s_s$ does not appear if and only if 
\[
n - r + p \le s.
\]

On the other hand, by definition, we have 
\[\left\lfloor \frac{rs}{n} \right\rfloor - (r+s-n) \ge r+s-n,
\]
since 
\[
\frac{rs}{n} - (r+s-n) = \frac{(n-r)(n-s)}{n} \ge 0.
\]

Combining the inequalities, we conclude that for $p \ge 1$, the simple reflection $s_s$ does not appear in $\tilde{w}$ if and only if
\[
n - r + p = s.
\]
\end{proof}

\subsection{} When $p = 0$ or $p=r + s - n$ with $r+s-n\geq 0$, then the semistable locus of the $\lambda$-action on $G(r,n)$ admits an explicit geometric description. 
In these cases, we show that the GIT quotient is a parabolic induction of a projective space.

\begin{proposition}\label{prop:semilocus} Assume that $p=0$ or $p=r+s-n$ with $r+s-n\geq 0$.
Let $w=w_{s,r}$ and $P=P_{S \setminus \{\alpha_r\}}$. We have
$$
G(r,n)_{\lambda}^{ss}(\mathcal{L}(\varpi_r)) = H \cdot (U_w wP/P \setminus \{wP/P\}).
$$
\end{proposition}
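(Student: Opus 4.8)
The plan is to prove the asserted equality by treating the two inclusions separately, using repeatedly that the semistable locus $Z:=G(r,n)_{\lambda}^{ss}(\mathcal{L}(\varpi_r))$ is $H$-stable (because $H\subseteq L:=C_{SL(n,\mathbb C)}(\lambda)$), together with the Richardson-cell description of Lemma~\ref{ss-general} and the combinatorial input $\tilde w\in W_{S\setminus\{\alpha_s\}}$ supplied by Lemma~\ref{lem:extention}. Throughout write $w:=w_{s,r}$, $P:=P_{S\setminus\{\alpha_r\}}$ and $Q:=P_{S\setminus\{\alpha_s\}}$, so that $L$ is the Levi factor of $Q$ and $W_Q=W_{S\setminus\{\alpha_s\}}=W_H$. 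The inclusion $H\cdot(U_w wP/P\setminus\{wP/P\})\subseteq Z$ is the straightforward one. Intersecting the decomposition of Lemma~\ref{ss-general}(2) with $X(w)$ and combining $\varphi\le w$ with $\varphi\ge w$ forces $A_w=\{(v,w):v<w\}$, hence $X(w)_{\lambda}^{ss}(\mathcal{L}(\varpi_r))=\bigsqcup_{v<w}R_{v,w}$. Since the Schubert cell $U_w wP/P=BwP/P$ decomposes as $\bigsqcup_{v\le w}R_{v,w}$ with $R_{w,w}=\{wP/P\}$, I identify $U_w wP/P\setminus\{wP/P\}=X(w)_{\lambda}^{ss}(\mathcal{L}(\varpi_r))\subseteq Z$, and $H$-stability gives the containment. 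The same bookkeeping shows $wP/P$ lies only in $R_{w,w}$ with $(w,w)\notin A$, so $wP/P$ is unstable and $H\cdot\{wP/P\}$ is disjoint from $Z$.

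For the reverse inclusion the crucial observation is the identity $H\cdot(U_w wP/P)=Q\cdot wP/P$, where this $Q$-orbit is the open dense orbit $O$ of $G(r,n)$. The identity follows from $H\cdot B=Q$, which I would verify from the decomposition $B=(B\cap L)U_Q$ together with $L=H\cdot Z(L)^{\circ}$ and $Z(L)^{\circ}\subseteq B$; then $H\cdot(U_w wP/P)=H\cdot BwP/P=(HB)\cdot wP/P=Q\cdot wP/P$. That this is the \emph{open} orbit is exactly where the hypothesis enters: by Lemma~\ref{lem:extention} we have $s_s\nleq\tilde w$, i.e. $\tilde w\in W_{S\setminus\{\alpha_s\}}=W_Q$, so $w_0^{S\setminus\{\alpha_r\}}=\tilde w\,w\in W_Q\,w\,W_P$, whence $Q\cdot wP/P=Q\cdot w_0^{S\setminus\{\alpha_r\}}P/P$. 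The latter contains the big cell $Bw_0^{S\setminus\{\alpha_r\}}P/P$, so it is dense, i.e. the open $Q$-orbit $O$.

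Next I would deduce $Z\subseteq O$ by a Bruhat-order monotonicity argument. The set $F:=\{\varphi\in W^{S\setminus\{\alpha_r\}}:\varphi P/P\in O\}$ of $T$-fixed points of the open orbit is an order filter: if $\varphi\in F$, $\psi\ge\varphi$ but $\psi P/P\in O'\neq O$, then $X(\psi)=\overline{B\psi P/P}\subseteq\overline{Q\psi P/P}=\overline{O'}$ yields $\varphi P/P\in\overline{O'}$, contradicting $\varphi P/P\in O$ since $O$ is open and $O\cap\overline{O'}=\varnothing$. As $wP/P\in O$, this gives $\{\varphi\ge w\}\subseteq F$. Writing $Z=\bigsqcup_{(v,\varphi)\in A}R_{v,\varphi}$ from Lemma~\ref{ss-general}(1), each cell satisfies $R_{v,\varphi}\subseteq B\varphi P/P\subseteq Q\varphi P/P=O$ because $\varphi\ge w$ puts $\varphi$ in $F$; hence $Z\subseteq O=H\cdot(U_w wP/P)$.

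Finally I would combine these: from $Z\subseteq H\cdot(U_w wP/P)=H\cdot(U_w wP/P\setminus\{wP/P\})\cup H\cdot\{wP/P\}$, intersecting with $Z$ and using that $H\cdot\{wP/P\}\cap Z=\varnothing$ while $H\cdot(U_w wP/P\setminus\{wP/P\})\subseteq Z$, I obtain $Z=H\cdot(U_w wP/P\setminus\{wP/P\})$. I expect the main obstacle to be the second and third steps: proving that $Q\cdot wP/P$ is the \emph{open} $Q$-orbit, which is the precise point at which the hypothesis $p=0$ or $p=r+s-n$ is used through Lemma~\ref{lem:extention}, and the order-filter property of $F$, which is what forces every semistable Richardson cell to sit inside a single $Q$-orbit rather than spilling into lower-dimensional ones.
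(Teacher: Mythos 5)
Your proof is correct, and it shares the paper's skeleton --- both directions rest on the identification $X(w)^{ss}_{\lambda}(\mathcal{L}(\varpi_r)) = U_w wP/P \setminus \{wP/P\}$, on the $H$-stability of the semistable locus, and on Lemma~\ref{lem:extention} as the combinatorial pivot --- but your mechanism for the hard inclusion is genuinely different. The paper argues cell-by-cell: for a semistable $x \in U_\phi \phi P/P$ with $\phi \geq w$ it factors $\phi = vw$ with $l(\phi)=l(v)+l(w)$, uses $s_s \nleq \tilde{w}$ to place $v$ in $W_{S\setminus\{\alpha_s\}}$, and then asserts (with only a sketch, via the roots $\beta \in R^+(\tilde{w}^{-1})$ satisfying $\alpha_s \nleq \beta$) that $x = gy$ for some $g \in H$ and $y \in U_wwP/P\setminus\{wP/P\}$. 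You instead prove the global identity $H\cdot(U_w wP/P) = (HB)\cdot wP/P = Q\cdot wP/P$ for $Q = P_{S\setminus\{\alpha_s\}}$ (your verification of $HB = Q$ via $B = (B\cap L)U_Q$ and $L = H\cdot Z(L)^{\circ}$ is sound over $\mathbb{C}$, where the required roots of determinants exist), use Lemma~\ref{lem:extention} to see that this orbit contains the big cell $Bw_0^{S\setminus\{\alpha_r\}}P/P$ and is therefore the open $Q$-orbit, and then use your order-filter argument --- valid because $Q$-orbit closures are unions of the finitely many $Q$-orbits, none of which can contain the open dense one --- to force every Richardson cell $R_{v,\varphi}$ with $\varphi \geq w$ from Lemma~\ref{ss-general}(1) into that open orbit. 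The trade-off: the paper's factorization stays inside Schubert-cell combinatorics and is in principle constructive, but its key lifting step is left implicit; your route proves the cleaner intermediate statement that the semistable locus is exactly the open $P_{S\setminus\{\alpha_s\}}$-orbit with the single unstable $H$-orbit $H\cdot wP/P$ removed, reducing everything to standard facts about Richardson cells and parabolic orbit geometry. Your first step is also more economical: you read off $X(w)^{ss}_{\lambda} = \bigsqcup_{v<w}R_{v,w} = U_wwP/P\setminus\{wP/P\}$ directly from Lemma~\ref{ss-general}(2), since $A_w = \{(v,w): v<w\}$ and $R_{w,w} = \{wP/P\}$, whereas the paper redoes the Hilbert--Mumford computation from scratch.
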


\begin{proof} 
We first claim that $X(w)_{\lambda}^{ss}(\mathcal{L}(\varpi_r)) = U_w wP/P \setminus \{wP/P\}$. By Lemma 2.2, for any $x = b w P/P \in U_w wP/P$  we have
\[
\mu^{\mathcal L(\varpi_r)}(x, \lambda) = -\langle w(\varpi_r), \lambda \rangle.
\]
Since $w$ is the minimal element for which the semistable locus is non-empty, we have 
\[
\langle w(\varpi_r), \lambda \rangle \leq 0,
\qquad
\text{and} \qquad
\langle v(\varpi_r), \lambda \rangle > 0 
\text{ for all } v < w.
\]
Hence,
\[
\begin{cases}
\mu^{\mathcal L(\varpi_r)}(x, \lambda) < 0, & \text{if } v < w, \\
\mu^{\mathcal L(\varpi_r)}(x, \lambda) \geq 0, & \text{if } v = w.
\end{cases}
\]

At the torus-fixed point \(x_0 = wP/P\), the fiber of \(\mathcal{L}(\varpi_r)\) has 
\(T\)-weight \(w(\varpi_r)\), so along \(\lambda\) the induced weight is 
\(-\langle w(\varpi_r), \lambda\rangle \geq 0\). Further, $\mu^{\mathcal L(\varpi_r)}(x, -\lambda) = \langle w(\varpi_r), \lambda \rangle <0$ for every $x \in B^-wP/P$. Since $x_0=wP/P \in BwP/P \cap B^-wP/P$, we have $\mu^{\mathcal L(\varpi_r)}(x_0, -\lambda) <0$. So $x_0$ is not semistable. 

Also, since $\mu^{\mathcal L(\varpi_r)}(x, \lambda)=-\langle v(\varpi_r), \lambda \rangle <0$ for every $x \in BvP/P$ and for all $v <w$ in $W^{S \setminus \{\alpha_r\}}$, every element of $BvP/P$ is $\lambda$-unstable. Hence, we have \[
X(w)^{ss}_{\lambda}(\mathcal{L}(\varpi_r)) 
\subseteq U_w wP/P \setminus \{wP/P\}.
\]
 Now let $x \in U_w wP/P \setminus \{wP/P\}$. Then there exists $v \in W^{S \setminus \{\alpha_r\}}$ with $v <w$ such that $x \in B^-vP/P$. Therefore, $\mu^{\mathcal L(\varpi_r)}(x, -\lambda)=\langle v(\varpi_r), \lambda \rangle \geq 0$. We already have $\mu^{\mathcal L(\varpi_r)}(x, \lambda)=-\langle w(\varpi_r), \lambda \rangle \geq 0$ and so $x$ is semistable.
Thus, we get
\[
X(w)^{ss}_{\lambda}(\mathcal{L}(\varpi_r)) 
= U_w wP/P \setminus \{wP/P\}.
\]
Now let $x \in G(r, n)_{\lambda}^{ss}(\mathcal{L}(\varpi_r))$. Then there exists an element $\phi \in W^{S \setminus \{\alpha_r\}}$ such that
$$ x \in U_{\phi} \phi P / P.$$
Note that $\phi \geq w$. Thus, we can write $\phi = v w$ for some $v \in W$ such that $vw \in W^{S \setminus \{\alpha_r\}}$ with $l(\phi) = l(v) + l(w).$ We claim that $v \in W_{S \setminus \{\alpha_s\}}$. 

Since $p=0$ or $p=r+s-n$, by Lemma~\ref{lem:extention}, we extend $w_{s,r}$ to $w_0^{S \setminus \{\alpha_r\}}$ as $
w_0^{S \setminus \{\alpha_r\}} = \tilde{w} \cdot w_{s,r},$ with $s_s\nleq \tilde w$.
Thus, the simple reflection $s_s$ does not appear in any reduced expression of $\tilde{w}$, the claim follows. 

Therefore, for any $\beta \in R^+(\tilde{w}^{-1})$ we have $\alpha_s \nleq \beta$. Thus, there is an element $g \in SL(s) \times SL(n-s)$ and an element $y \in \bigl(U_w w P / P \setminus \{w P / P\}\bigr)$ such that $x=gy$. Hence, $$x \in H.\bigl(U_w w P / P \setminus \{w P / P\}\bigr).$$
Conversely, let $x \in H \cdot (U_w wP/P \setminus \{wP/P\})$. Since every element of $(U_w wP/P \setminus \{wP/P\})$ is semi-stable and $G(r,n)_{\lambda}^{ss}(\mathcal{L}(\varpi_s))$ is $H$-stable, we obtain $x \in G(r,n)_{\lambda}^{ss}(\mathcal{L}(\varpi_s))$. This proves the proposition. 
\end{proof}


\subsection{Parabolic Induction} 
We first recall the construction of a parabolic induction.

Let $G'$ be a connected reductive algebraic group, and let $P' \subseteq G'$ 
be a parabolic subgroup with Levi decomposition $P' = L'U'$, 
where $L'$ is a Levi subgroup and $U'$ is the unipotent radical of $P'$.  If $Z$ is an $P'$-variety, the {\it parabolic induction} of $Z$ to $G'$ 
is the $G'$-variety defined by
\[
G' \times_{P'} Z :=(G' \times Z)/\sim,
\]
where the equivalence relation is given by
\[
(g,z) \sim (gp'^{-1}, ~ p'z), \quad \text{for all } p' \in P', ~z\in Z~.
\]

Note that this construction equips $G' \times_{P'} Z$  with a natural 
left $G'$-action by multiplication on the first factor.

Assume that either $p=0$ or $p=r+s-n$ when $r+s \geq n$. Now let $P_{s,r} = \text{Stab}_G(X(w_{s,r})) \cap H,$ and let $\mathbb{P}(M_{s-p,r-p})$ be the projectivization of the space of $s-p \times r-p$ matrices. Then, 

\begin{theorem}\label{thm:structure}
We have the following isomorphism:
$$
X \cong H \times_{P_{s,r}} \mathbb{P}(M_{s-p,r-p}).
$$
\end{theorem}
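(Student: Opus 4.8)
The plan is to assemble the isomorphism from the structural results already established, treating the GIT quotient as a homogeneous fibre bundle over $H/P_{s,r}$. First I would invoke Proposition~\ref{prop:semilocus}, which identifies the semistable locus as the $H$-saturation
\[
G(r,n)_{\lambda}^{ss}(\mathcal{L}(\varpi_r)) = H\cdot\bigl(U_w wP/P \setminus \{wP/P\}\bigr),
\]
where $w=w_{s,r}$ and $P=P_{S\setminus\{\alpha_r\}}$. Since $P_{s,r} = \mathrm{Stab}_G(X(w_{s,r}))\cap H$ is the stabilizer in $H$ of the Schubert variety $X(w_{s,r})$, it preserves the slice $U_w wP/P \setminus \{wP/P\}$ and hence its $\lambda$-quotient. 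The natural map $H \times \bigl(U_w wP/P \setminus \{wP/P\}\bigr) \to G(r,n)_{\lambda}^{ss}(\mathcal{L}(\varpi_r))$, $(g,y)\mapsto g\cdot y$, is $H$-equivariant and descends to the associated bundle, so I would first check that it induces an isomorphism of $H$-varieties
\[
H \times_{P_{s,r}} \bigl(U_w wP/P \setminus \{wP/P\}\bigr) \;\xrightarrow{\ \sim\ }\; G(r,n)_{\lambda}^{ss}(\mathcal{L}(\varpi_r)).
\]
The key point for injectivity is that the fibre over the base point $H/P_{s,r}$ is exactly the slice and that $P_{s,r}$ is precisely the subgroup of $H$ stabilizing it, which follows from the $s_s\nleq\tilde w$ condition of Lemma~\ref{lem:extention}: no root $\beta\in R^+(\tilde w^{-1})$ satisfies $\alpha_s\leq\beta$, so the only elements of $H$ carrying the open slice back to itself lie in $P_{s,r}$.

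Next I would pass to the GIT quotient by $\lambda$. The action of $\lambda$ commutes with that of $H=C_G(\lambda)^{\mathrm{ss}}$ and preserves each associated-bundle fibre, so the quotient functor commutes with the induction construction; concretely, the $\lambda$-action only moves points within the slice fibres and leaves the $H/P_{s,r}$-direction fixed. Thus
\[
X = \lambda\gitquot G(r,n)_{\lambda}^{ss}(\mathcal{L}(\varpi_r))
\;\cong\; H \times_{P_{s,r}} \Bigl(\lambda\gitquot \bigl(U_w wP/P \setminus \{wP/P\}\bigr)\Bigr).
\]
Here I would be careful that $P_{s,r}$ still acts on the fibre quotient and that forming the bundle commutes with the categorical quotient, which is a standard descent argument for the free, proper $H$-action transverse to $\lambda$. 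Finally I would identify the fibre quotient: by Proposition~\ref{prop:semilocus} the slice is exactly $X(w_{s,r})_{\lambda}^{ss}(\mathcal{L}(\varpi_r))$, and by the result of \cite{GK} quoted in the introduction,
\[
\lambda\gitquot X(w_{s,r})_{\lambda}^{ss}(\mathcal{L}(\varpi_r)) \;\cong\; \mathbb{P}(M_{s-p,\,r-p}).
\]
Substituting this in yields the claimed isomorphism $X\cong H\times_{P_{s,r}}\mathbb{P}(M_{s-p,r-p})$.

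The main obstacle I anticipate is the compatibility step, namely verifying rigorously that taking the $\lambda$-quotient commutes with parabolic induction by $P_{s,r}$, so that the bundle structure descends cleanly to $X$. This requires knowing that the $\lambda$-action is transverse to the $H$-orbit directions and that $P_{s,r}$ acts on the slice quotient $\mathbb{P}(M_{s-p,r-p})$ compatibly with the $\lambda$-quotient map — in effect that the descended $\lambda$-quotient of the whole saturation is a locally trivial (or at least associated) bundle with the correct structure group. I would handle this by exhibiting the quotient map on the saturation as $H$-equivariant and $\lambda$-invariant, checking local triviality over $H/P_{s,r}$ using the open cell structure, and appealing to the universal property of the categorical quotient to obtain the isomorphism; the fact that $\lambda$ centralizes $H$ and preserves the fibration is what makes this go through.
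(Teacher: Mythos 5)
Your proposal is correct in substance and rests on the same ingredients as the paper's proof---Proposition~\ref{prop:semilocus} for the $H$-saturation, the identification $\lambda \gitquot X(w_{s,r})^{\mathrm{ss}}_{\lambda}(\mathcal{L}(\varpi_r)) \cong \mathbb{P}(M_{s-p,r-p})$ from \cite{GK}, and the identification of $P_{s,r}$ as the relevant stabilizer---but it runs the two main steps in the opposite order. The paper works entirely downstairs: it defines $\Phi\colon H\times_{P_{s,r}}\mathbb{P}(M_{s-p,r-p})\to X$ by $[(h,y)]\mapsto h\cdot y$, proves injectivity by lifting an equality $h_1\cdot y_1=h_2\cdot y_2$ to semistable representatives and absorbing the $\lambda(t)$-twist into the representative of $y_2$, gets surjectivity from Proposition~\ref{prop:semilocus}, and then upgrades the bijective morphism to an isomorphism by Zariski's main theorem, using normality of $X$. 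You instead work upstairs, first claiming an $H$-equivariant isomorphism $H\times_{P_{s,r}}\bigl(U_w wP/P\setminus\{wP/P\}\bigr)\simeq G(r,n)^{\mathrm{ss}}_{\lambda}(\mathcal{L}(\varpi_r))$ and then commuting the $\lambda$-quotient past parabolic induction. Your route yields a stronger, equivariant intermediate statement and makes the fibration transparent, but it carries the extra descent step you flag yourself; that step does go through, since $H\to H/P_{s,r}$ is Zariski-locally trivial, good quotients are local over the base, and the projection to $H/P_{s,r}$ is $\lambda$-invariant and hence factors through $X$ by the universal property---whereas the paper sidesteps all of this with a single application of ZMT at the end. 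Two cautions: upstairs you still need to pass from bijective to isomorphism (smoothness of the semistable locus plus ZMT suffices), and your injectivity step invokes the set-theoretic stabilizer of the slice, whereas the bundle map requires the pointwise transporter statement that $h\cdot y_1=y_2$ with $y_1,y_2$ in the slice forces $h\in P_{s,r}$; this is the same subtlety the paper's own proof glosses over when it concludes that $h_2^{-1}h_1$ stabilizes $X(w_{s,r})$ from a single incidence, so it is a shared, fixable gap rather than a defect of your route.
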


\begin{proof}
Since $X(w_{s,r})$ is the minimal Schubert variety admitting semistable points for $\lambda$, by \cite[Theorem 1.3]{GK}, we have
$$
\lambda \gitquot X(w_{s,r})^{\mathrm{ss}}_{\lambda}(\mathcal{L}(\varpi_r)) = \mathbb{P}(M_{s-p,r-p}).
$$
Define
\[
\Phi : H \times_{P_{s,r}} \mathbb{P}(M_{s-p,r-p}) \longrightarrow X,
\qquad [(h,y)] \longmapsto h \cdot y,
\]
where $y \in \mathbb{P}(M_{s-p,r-p})$ is viewed as the $\lambda$–orbit of some 
$x \in X(w_{s,r})^{\mathrm{ss}}_{\lambda}(\mathcal{L}(\varpi_r))$.

We claim that $\Phi$ is an isomorphism.

\noindent\underline{Injectivity of $\Phi$:} 
Suppose $\Phi([(h_1, y_1)]) = \Phi([(h_2, y_2)])$. 
Then $h_1 \cdot y_1 = h_2 \cdot y_2$ in $X$. 
Choose $x_i \in X(w_{s,r})^{\mathrm{ss}}_{\lambda}(\mathcal{L}(\varpi_r))$ such that $\pi(x_i) = y_i$, 
where $$\pi : X(w_{s,r})^{\mathrm{ss}}_{\lambda}(\mathcal{L}(\varpi_r)) \to \mathbb{P}(M_{s-p,r-p})$$ is the quotient map.
The equality $h_1 \cdot y_1 = h_2 \cdot y_2$ means that the $\lambda$–orbits 
of $h_1 x_1$ and $h_2 x_2$ coincide, 
so there exists $t \in \mathbb{G}_m$ such that
\[
h_1 x_1 = \lambda(t) \cdot h_2 x_2.
\]
Hence,
\[
(h_2^{-1} h_1) \cdot x_1 = \lambda(t) \cdot x_2.
\]
Since $\pi(\lambda(t) \cdot x_2) = \pi(x_2) = y_2$, 
we may replace $x_2$ by $\lambda(t) \cdot x_2$ (it represents the same 
class $y_2$). Thus we may assume that
\[
(h_2^{-1} h_1) \cdot x_1 = x_2.
\]
As $x_1, x_2 \in X(w)$, so $h_2^{-1} h_1$ stabilizes $X(w)$. 
Since $h_2^{-1} h_1 \in H$, we obtain 
\[
h_2^{-1} h_1 \in H \cap \operatorname{Stab}_G(X(w_{s,r})) = P_{s,r}.
\]
Hence $h_1 \equiv h_2 \pmod{P_{s,r}}$, proving injectivity.

\medskip
\noindent\underline{Surjectivity of $\Phi$:} 
By Proposition~\ref{prop:semilocus}, 
every semistable point of $X$ is $H$–equivalent to a point in 
$X(w_{s,r})^{\mathrm{ss}}_{\lambda}(\mathcal{L}(\varpi_r))$. 
Thus, every point of $X$ arises as $h \cdot y$ for some 
$h \in H$ and $y \in \mathbb{P}(M_{s-p,r-p})$, proving surjectivity.

Moreover, since the base field is $\mathbb C$ and $X$ is normal, by Zariski's main theorem, the map $\Phi$ is an isomorphism (see \cite[$\text{III}_2$, Theorem ~ 4.4.3, p. 136]{EGAIII}). This completes the proof.
\end{proof}

\subsection{\bf The $H$-orbits on $X$}

In this subsection, we describe the orbits of the action of $H$ on $X$. For $1 \leq t \leq \min\{r-p,s-p\}$, let us denote the quotient by
$$
Y_t := \lambda \gitquot \left( \{ A \in M_{s-p,r-p} : \text{rank}(A) = t \} \setminus \{0\} \right).
$$
\begin{proposition}\label{prop:Starti} We have:
\begin{enumerate}
\item For $1 \leq t \leq \min\{r-p,s-p\}$, the $H$-orbit in $X$ is given by $O_t$, where
$$
O_t = H \times_{P_{s,r}} Y_t.
$$
\item The number of $H$-orbits in $X$ is $\min\{r-p,s-p\}$.
\item We have $O_{t_1} \subset \overline{O}_{t_2}$ if and only if $t_1 \leq t_2$.
\end{enumerate}
\end{proposition}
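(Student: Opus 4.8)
The plan is to transfer everything from the fiber $\mathbb{P}(M_{s-p,r-p})$ to the parabolically induced variety $X \cong H \times_{P_{s,r}} \mathbb{P}(M_{s-p,r-p})$ via Theorem \ref{thm:structure}, reducing the three claims to statements about the $P_{s,r}$-action on the fiber together with the general behavior of orbits in a homogeneous fiber bundle. The key observation is that for the parabolic induction $H \times_{P_{s,r}} Z$, the $H$-orbits are in bijection with the $P_{s,r}$-orbits on $Z$: the orbit through $[(e,z)]$ is precisely $H \times_{P_{s,r}} (P_{s,r}\cdot z)$, since $h\cdot[(e,z)] = [(h,z)]$ and $[(h,z)] = [(h',z')]$ iff $h' = hp^{-1}$ and $z' = pz$ for some $p \in P_{s,r}$. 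So the whole problem comes down to understanding the $P_{s,r}$-action on $\mathbb{P}(M_{s-p,r-p})$.

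First I would identify how $P_{s,r} = \mathrm{Stab}_G(X(w_{s,r})) \cap H$ acts on the quotient $\mathbb{P}(M_{s-p,r-p})$. The Levi subgroup $H = SL(s,\mathbb C)\times SL(n-s,\mathbb C)$ acts on the Grassmannian, and after passing to the GIT quotient of $X(w_{s,r})$ (which by \cite[Theorem 1.3]{GK} is $\mathbb{P}(M_{s-p,r-p})$), the residual $P_{s,r}$-action should descend to the standard left-right multiplication action of a Levi-type subgroup on the matrix space $M_{s-p,r-p}$, namely $(g_1,g_2)\cdot A = g_1 A g_2^{-1}$ for appropriate block factors $g_1 \in GL(s-p)$, $g_2 \in GL(r-p)$ arising from $P_{s,r}$. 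The essential point is that the image of $P_{s,r}$ in $\mathrm{PGL}(M_{s-p,r-p})$ surjects onto the image of $GL(s-p)\times GL(r-p)$, so that its orbits on $\mathbb{P}(M_{s-p,r-p})$ coincide with the orbits under this two-sided multiplication. Granting this, the orbits are exactly the loci of matrices of fixed rank $t$ modulo scaling, i.e.\ the sets $\{A : \mathrm{rank}(A)=t\}\setminus\{0\}$ up to $\lambda$-scaling, whose quotients are the $Y_t$; this is the classical fact that two matrices lie in the same $GL\times GL$ orbit iff they have the same rank. This immediately yields (1), with $O_t = H\times_{P_{s,r}} Y_t$, and gives (2), since rank ranges over $1,\dots,\min\{r-p,s-p\}$ (rank $0$ being excluded by deletion of the origin).

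For the closure relation (3), I would argue that taking closures commutes appropriately with the induction functor: since $H \times_{P_{s,r}}(-)$ is a locally trivial fiber bundle (hence flat with irreducible fibers), we have $\overline{O_{t_2}} = H \times_{P_{s,r}} \overline{Y_{t_2}}$, so $O_{t_1}\subseteq \overline{O_{t_2}}$ iff $Y_{t_1}\subseteq \overline{Y_{t_2}}$ in the fiber. The latter reduces to the determinantal closure relation on matrix space: the Zariski closure of the rank-$t_2$ locus is $\{A : \mathrm{rank}(A)\le t_2\}$, cut out by the vanishing of all $(t_2+1)\times(t_2+1)$ minors, which contains the rank-$t_1$ locus precisely when $t_1 \le t_2$. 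Passing to the $\lambda$-quotient preserves this containment since $\lambda$ acts by scaling and the rank stratification is scaling-invariant.

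The main obstacle I anticipate is the first step: pinning down precisely that the residual $P_{s,r}$-action on $\mathbb{P}(M_{s-p,r-p})$ is the two-sided matrix multiplication, and in particular that the unipotent radical of $P_{s,r}$ acts trivially on the quotient while the Levi part realizes the full $GL(s-p)\times GL(r-p)$ orbit structure. This requires unwinding the identification in \cite[Theorem 1.3]{GK} of the GIT quotient of $X(w_{s,r})$ with $\mathbb{P}(M_{s-p,r-p})$ and tracking the $H$-equivariance through that construction; once the action is identified, claims (1)--(3) follow from standard determinantal geometry.
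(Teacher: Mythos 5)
Your overall skeleton is the same as the paper's: reduce, via $X \cong H \times_{P_{s,r}} \mathbb{P}(M_{s-p,r-p})$, the $H$-orbit structure of $X$ to the $P_{s,r}$-orbit structure of the fiber, identify the latter with the rank stratification, and handle closures through the bundle. However, the step you yourself flag as ``the main obstacle'' --- that the $P_{s,r}$-orbits on $\mathbb{P}(M_{s-p,r-p})$ coincide with the orbits of $SL(s-p,\mathbb{C})\times SL(r-p,\mathbb{C})$ acting by two-sided multiplication --- is precisely the mathematical content of the proposition, and your proposal leaves it unproved. Your suggested route (unwinding the identification of \cite[Theorem 1.3]{GK} and tracking equivariance through that construction) is a plan rather than an argument: nothing in your text actually pins down how the unipotent radical of $P_{s,r}$, or the center of its Levi, acts on the quotient of $X(w_{s,r})$, and carrying that out explicitly would be a substantial computation.

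The paper fills this gap with a short, soft representation-theoretic argument that needs no unwinding at all. Let $U_{s,r}$ be the unipotent radical of $P_{s,r}$. By the Borel fixed point theorem, $\mathbb{P}(M_{s-p,r-p})^{U_{s,r}} \neq \emptyset$; since unipotent groups have only the trivial character, this lifts to $M_{s-p,r-p}^{U_{s,r}} \neq \{0\}$. Because $U_{s,r}$ is normal in $P_{s,r}$, this fixed subspace is a $P_{s,r}$-submodule, and since $M_{s-p,r-p}$ is an irreducible $P_{s,r}$-module (the action factoring through $SL(s-p,\mathbb{C})\times SL(r-p,\mathbb{C})$, for which it is irreducible), the fixed subspace is everything: $U_{s,r}$ acts trivially. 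Schur's lemma then forces the center of the Levi to act by scalars, so the entire radical of $P_{s,r}$ acts trivially on $\mathbb{P}(M_{s-p,r-p})$, and the $P_{s,r}$-orbits are exactly the $SL(s-p,\mathbb{C})\times SL(r-p,\mathbb{C})$-orbits, i.e.\ the rank strata. If you replace your ``surjects onto the image of $GL(s-p)\times GL(r-p)$'' claim by this argument, the rest of your proposal --- the bijection between $H$-orbits of $H\times_{P_{s,r}}Z$ and $P_{s,r}$-orbits of $Z$, the count $\min\{r-p,s-p\}$, and the closure relation via $\overline{O_t}=H\times_{P_{s,r}}\overline{Y_t}$ together with the determinantal closure of rank loci --- is correct and agrees with the paper; indeed your treatment of part (3) is essentially the argument the paper gives in the corollary immediately following the proposition.
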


\begin{proof}
By Theorem~\ref{thm:structure}, we have
$$
X \cong H \times_{P_{s,r}} \mathbb{P}(M_{s-p,r-p}).
$$
Thus, the number of $H$-orbits in $X$ is equal to the number of $P_{s,r}$-orbits in $\mathbb{P}(M_{s-p,r-p})$. Let $U_{s,r}$ be the unipotent radical of $P_{s,r}$. By Borel fixed point theorem, we have $$\mathbb{P}(M_{s-p,r-p})^{U_{s,r}} \neq \emptyset.$$ Since the character group of $U_{s,r}$ is trivial, it follows that $M_{s-p,r-p}^{U_{s,r}} \neq \{0\}$. Note that $M_{s-p,r-p}$ is an irreducible $P_{s,r}$-module as the $P_{s,r}$-action on $M_{s-p,r-p}$ factors through $SL(s-p,\mathbb{C}) \times SL(r-p,\mathbb{C})$ and $M_{s-p,r-p}$ is an irreducible $SL(s-p,\mathbb{C}) \times SL(r-p,\mathbb{C})$-module. Hence the action of $U_{s,r}$ on $M_{s-p,r-p}$ is trivial.

Further, by Schur's lemma, the center of the Levi subgroup of $P_{s,r}$ acts as a nonzero scalar on $M_{s-p,r-p}$. Therefore, the action of the radical of $P_{s,r}$ on $\mathbb{P}(M_{s-p,r-p})$ is trivial. Consequently, the number of $P_{s,r}$-orbits in $\mathbb{P}(M_{s-p,r-p})$ is equal to the number of $SL(s-p,\mathbb{C}) \times SL(r-p,\mathbb{C})$-orbits in $\mathbb{P}(M_{s-p,r-p})$. By standard linear algebra arguments, we see that the number of $SL(s-p,\mathbb{C}) \times SL(r-p,\mathbb{C})$-orbits in $\mathbb{P}(M_{s-p,r-p})$ is $\min\{r-p, s-p\}$ and these orbits are classified by the rank of the matrices with the required relation. Thus, we conclude the proof.
\end{proof}

We deduce the following:
\begin{corollary}
For $1 \leq t \leq \min\{r-p,s-p\}$, define $X_t := H \times_{P_{s,r}} Z_t$, where $$Z_t=\lambda \gitquot \left( \{ A \in M_{s-p,r-p} : \text{rank}(A) \leq t \} \setminus \{0\} \right).$$ 
\begin{enumerate}
\item We then have $X_t=\overline{O_t}$.
    \item We have the stratification of $X$:
    $$
    X_1 \subset X_2 \subset \cdots \subset X_{\min\{r-p,s-p\}-1} \subset X_{\min\{r-p,s-p\}} = X.
    $$
    \item The orbit closure $X_1$ is a closed orbit, and
    $$
    X \setminus O_{\min\{r-p,s-p\}} = X_{\min\{r-p,s-p\}-1}.
    $$
\end{enumerate}
\end{corollary}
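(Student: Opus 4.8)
The plan is to reduce everything to two ingredients: the classical geometry of determinantal varieties inside $\mathbb{P}(M_{s-p,r-p})$, and the fact that the associated-bundle (parabolic induction) construction commutes with Zariski closure. Throughout I would write $m := \min\{r-p,s-p\}$ and, as in Proposition~\ref{prop:Starti}, identify $Y_t$ and $Z_t$ with their images in $\mathbb{P}(M_{s-p,r-p})$ under the quotient map; since the $\lambda$–action is by scaling and rank is scaling–invariant, $Y_t = \{[A] : \text{rank}(A) = t\}$ and $Z_t = \{[A] : \text{rank}(A) \leq t\}$. The first observation is that $Z_t$ is the determinantal variety cut out by the $(t+1)\times(t+1)$ minors, hence closed in $\mathbb{P}(M_{s-p,r-p})$, and that it is precisely the closure of the rank–exactly–$t$ locus, i.e. $\overline{Y_t} = Z_t$. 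This is standard.

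For the second ingredient I would establish that $\overline{H \times_{P_{s,r}} W} = H \times_{P_{s,r}} \overline{W}$ for every $P_{s,r}$–stable $W \subseteq \mathbb{P}(M_{s-p,r-p})$, the closures being taken in $\mathbb{P}(M_{s-p,r-p})$ and in $X$ respectively. The key point is that the quotient map $q : H \times \mathbb{P}(M_{s-p,r-p}) \to X = H \times_{P_{s,r}} \mathbb{P}(M_{s-p,r-p})$ is the projection of a (Zariski–locally trivial) principal $P_{s,r}$–bundle, hence open and surjective; for such a map one has the purely topological identity $q^{-1}(\overline{S}) = \overline{q^{-1}(S)}$ for every subset $S$. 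Applying this with $S = O_t$, and using that $Y_t$ is $P_{s,r}$–stable so that $q^{-1}(O_t) = H \times Y_t$, gives $q^{-1}(\overline{O_t}) = \overline{H \times Y_t} = H \times \overline{Y_t} = H \times Z_t$, whence $\overline{O_t} = q(H \times Z_t) = H \times_{P_{s,r}} Z_t = X_t$. This proves (1).

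Part (2) is then bookkeeping: the chain of determinantal inclusions $Z_1 \subset Z_2 \subset \cdots \subset Z_m$ is preserved by the functor $H \times_{P_{s,r}}(-)$, giving $X_1 \subset \cdots \subset X_m$; and $Z_m = \mathbb{P}(M_{s-p,r-p})$ because every $(s-p)\times(r-p)$ matrix has rank at most $m$, so $X_m = H \times_{P_{s,r}} \mathbb{P}(M_{s-p,r-p}) = X$ by Theorem~\ref{thm:structure}. For (3), since a nonzero matrix of rank $\leq 1$ has rank exactly $1$, we have $Z_1 = Y_1$, so $X_1 = \overline{O_1} = O_1$; as $Y_1$ is the closed Segre locus, $X_1$ is a single closed $H$–orbit, consistent with Proposition~\ref{prop:Starti}(3). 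Finally $Y_m$ is the open full–rank locus with complement $Z_{m-1}$ inside $\mathbb{P}(M_{s-p,r-p})$, so removing the open orbit $O_m = H \times_{P_{s,r}} Y_m$ from $X$ leaves $H \times_{P_{s,r}} Z_{m-1} = X_{m-1}$.

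The main obstacle is the closure–commutation step: one must verify carefully that the associated–bundle quotient map $q$ is genuinely open, which is where the local triviality of $H \to H/P_{s,r}$ (via the Bruhat big–cell section) enters, so that the identity $q^{-1}(\overline{S}) = \overline{q^{-1}(S)}$ is available and the closure can be transported fiberwise from $\mathbb{P}(M_{s-p,r-p})$ up to $X$. Everything else follows formally from the rank stratification already recorded in Proposition~\ref{prop:Starti} together with the standard facts on determinantal varieties and the Segre locus.
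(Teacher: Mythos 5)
Your proposal is correct and follows essentially the same route as the paper's proof: both identify $\overline{Y_t}=Z_t$ as a projective determinantal variety and then transport closures through the parabolic-induction construction, with parts (2) and (3) handled by the same fiberwise bookkeeping. The only difference is that you justify the closure-commutation step $\overline{H\times_{P_{s,r}}Y_t}=H\times_{P_{s,r}}\overline{Y_t}$ more explicitly, via openness of the principal-bundle projection $q$ and the identity $q^{-1}(\overline{S})=\overline{q^{-1}(S)}$, where the paper simply asserts $\overline{H\cdot S}=H\cdot\overline{S}$ from the homogeneous-bundle description --- a refinement of the same idea rather than a genuinely different argument.
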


\begin{proof}
    Recall that
$Y_t=\{[A]\in\mathbb P(M_{s-p,r-p}) : \rank(A)=t\}$ and 
$O_t=H\times_{P_{s,r}}Y_t$.

(1) Since $\overline{Y_t}=\{[A]\in\mathbb P(M_{s-p,r-p}):\rank(A)\le t\}$, 
we have
\[
\overline{O_t}
=\overline{H\cdot ( \, \{1\}\times Y_t \,) }
=H\cdot\overline{\{1\}\times Y_t}
=H\times_{P_{s,r}}\overline{Y_t}.
\]
The last equality follows from the homogeneous-bundle description 
$$H\times_{P_{s,r}}\mathbb P(M_{s-p,r-p})\to H/P_{s,r}.$$
By definition $Z_t$ is the $\lambda$–quotient of 
$\overline{Y_t}\setminus\{0\}$, hence $H\times_{P_{s,r}}Z_t$ is the image of 
$H\times_{P_{s,r}}\overline{Y_t}$ in the GIT quotient. Therefore
\[
X_t \;=\; H\times_{P_{s,r}}Z_t \;=\; H\times_{P_{s,r}}\overline{Y_t}
\;=\; \overline{O_t},
\]
as required.

(2) The inclusions $\overline{Y_{t}}\subset \overline{Y_{t+1}}$ for all $t$ 
is obvious. Applying the 
functor $H\times_{P_{s,r}}(-)$ (which preserves inclusions) yields
\[
X_1 \subset X_2 \subset \cdots \subset X_{\min\{r-p,s-p\}}.
\]
By Proposition \ref{prop:Starti} $X_{\min\{r-p,s-p\}}$ 
equals the whole $X$, so the displayed chain is a stratification of $X$.

(3) The variety $X_1$ corresponds to the unique 
closed $SL(s-p, \mathbb C)\times SL(r-p, \mathbb C)$–orbit in $\mathbb P(M_{s-p,r-p})$ consisting of rank $1$ matrices. Hence, 
$X_1$ is a closed $H$–orbit in $X$.  
Finally the open dense orbit is $O_{\min\{r-p,s-p\}}$, so its complement 
is the union of all lower strata; in particular, the complement of the top 
orbit equals $X_{\min\{r-p,s-p\}-1}$, which is the union of closures of orbits 
of rank at most $\min\{r-p,s-p\}-1$. This gives the proof of (3).
\end{proof}

\begin{proposition}
For $1 \leq t \leq \min\{r-p,s-p\}$, the variety $X_t$ has the following properties:
\begin{enumerate}
    \item $X_t$ is normal and Cohen–Macaulay.
    \item $X_t$ is a $H$-spherical variety.
\end{enumerate}
\end{proposition}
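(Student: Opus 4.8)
The plan is to leverage the homogeneous fibre-bundle description $X_t \cong H \times_{P_{s,r}} Z_t$ established in the preceding corollary, reducing both assertions to properties of the fibre $Z_t$ together with the sphericity of the ambient quotient $X$. The first point I would record is that, because $\lambda$ acts on $M_{s-p,r-p}$ by scaling, the rank-bounded cone is $\lambda$-stable and the scaling action is free away from the origin; hence
\[
Z_t \;\cong\; \mathbb{P}\bigl(\{A \in M_{s-p,r-p} : \rank(A) \le t\}\bigr),
\]
the projectivization of the generic determinantal variety of $(s-p)\times(r-p)$ matrices of rank at most $t$.

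For part (1), I would first invoke the classical theory of determinantal rings (Hochster--Eagon, Bruns--Vetter): the affine cone $D_t = \{A : \rank(A)\le t\}$ is irreducible, normal, and Cohen--Macaulay, whence so is the punctured cone $D_t\setminus\{0\}$. Both properties then descend to $Z_t$ along the faithfully flat $\mathbb{G}_m$-bundle $D_t\setminus\{0\}\to Z_t$. To pass from the fibre to the total space, I would use that $X_t \to H/P_{s,r}$ is a Zariski-locally trivial bundle with fibre $Z_t$, arising from the local triviality of $H\to H/P_{s,r}$ over translates of the big Bruhat cell. Over each trivializing open $U$, which is smooth, one has $X_t|_U\cong U\times Z_t$, a product of a smooth variety with a normal, Cohen--Macaulay variety, hence itself normal and Cohen--Macaulay; as both properties are local, they hold on all of $X_t$.

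For part (2), I would argue by inheritance from $X$. By Proposition~\ref{prop:sphericality}, $X$ is $H$-spherical; by the theorem of Vinberg and Brion (valid in characteristic zero), a normal $H$-variety is spherical precisely when a Borel subgroup $B_H \subseteq H$ acts with finitely many orbits, so $B_H$ has only finitely many orbits on $X$. Since $X_t = \overline{O_t}$ is a closed, $H$-stable --- in particular $B_H$-stable --- subvariety of $X$, the $B_H$-orbits contained in $X_t$ form a subset of this finite collection. As $X_t$ is irreducible (being an orbit closure), one of these finitely many locally closed $B_H$-orbits is necessarily dense; combined with the normality established in part (1), this verifies the definition of sphericity for $X_t$.

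I expect the main obstacle to be the careful identification of $Z_t$ with the projective determinantal variety --- in particular, confirming that the $\lambda$-linearization is genuine scaling, so that the GIT quotient is the honest projectivization --- and the verification that normality and Cohen--Macaulayness descend from the fibre to the associated bundle via the faithful flatness and local triviality invoked above. Once these are in place, part (2) becomes a formal consequence of the finiteness of $B_H$-orbits on the spherical variety $X$, requiring no further geometric input beyond the normality supplied by part (1).
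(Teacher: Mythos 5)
Your proposal is correct and takes essentially the same route as the paper: part (1) rests on the classical normality and Cohen--Macaulayness of determinantal varieties, transported to $X_t$ through the Zariski-locally trivial bundle $X_t \to H/P_{s,r}$, and part (2) deduces sphericality of the closed $H$-stable subvariety $X_t$ from the sphericality of $X$ (Proposition \ref{prop:sphericality}). The only differences are expository: you re-derive the inheritance step via the Brion--Vinberg finiteness of Borel orbits where the paper simply cites \cite[Theorem 3.1.19]{Perrin} (the paper also records an alternative, self-contained proof exhibiting a dense $B'$-orbit through the matrix $J_t$ in $Z_t$), and your remark that the $\lambda$-scaling on the punctured cone is free should be weakened to say the stabilizers are finite, which does not affect the identification of $Z_t$ with the projective determinantal variety.
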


\begin{proof}

Since $Z_t$ is a determinantal variety, it is normal and Cohen--Macaulay (see \cite{LR}), and hence $X_t$ is also normal and Cohen--Macaulay.

Next, we show that $X_t$ is $H$-spherical for all $1 \leq t \leq \min\{r-p,s-p\}$. By Proposition \ref{prop:sphericality}, $X$ is $H$-spherical. Since the $X_t$'s are closed $H$-stable subvarieties of $X$, it follows that each $X_t$ is an $H$-spherical variety (see \cite[Theorem 3.1.19]{Perrin}).

We can also see the sphericality in a more elementary way, as we explain below: We first show that ${Z_t}$ is $SL(s-p, \mathbb C) \times SL(r-p, \mathbb C)$-spherical. 
Recall that the semisimple part of $P_{s,r}$ is  $SL(s-p, \mathbb C) \times SL(r-p, \mathbb C)$.
 Since $X_t$ is the parabolic induction $H \times_{P_{s,r}} Z_{t}$, it follows that $X_t$ is $H$-spherical. 
The group $SL(s-p,\mathbb{C}) \times SL(r-p,\mathbb{C})$ acts
on $M_{s-p,r-p}$ and hence on $Z_t$ by
\[
(C,D)\cdot A = CAD^{-1}, \,\, C \in SL(s-p,\mathbb{C}), \,\, D \in SL(r-p,\mathbb{C}), \,\,  A \in M_{s-p,r-p}.
\]
Let $B_{s-p}^-$ be the Borel of lower-triangular matrices in $\mathrm{SL}(s-p, \mathbb C)$ and $B_{r-p}$ be the Borel of upper-triangular matrices in $\mathrm{SL}(r-p, \mathbb C)$.
Let $B':=B_{s-p}^- \times B_{r-p}$. We show that the above action of $SL(s-p,\mathbb{C}) \times SL(r-p,\mathbb{C})$
on $\overline{Y_t}$ has a dense $B'$-orbit. Let 
\[{J}_t:=\begin{pmatrix} I_t & 0 \\[4pt] 0 & 0\end{pmatrix}\in M_{s-p,r-p},
\] where $I_t$ is the $t \times t$ identity matrix. We show that the orbit $B'.J_t$ is dense in $\overline{Y_t}$. 

To that end, we compute the dimension of the stabilizer $Stab_{B'}(J_t)$. Note that \[(C,D)\cdot J_t =J_t\quad \text{implies}\quad CJ_t=J_tD\quad \text{for} \quad C \in B_{s-p}^-, ~ D \in B_{r-p}.\] 
We write $C$ and $D$ in block decomposition form compatible with the \((t,s-p-t)\) row and \((r-p,r-p-t)\) column partitions:
\[
C=\begin{pmatrix} C_{11} & 0 \\ C_{21} & C_{22}\end{pmatrix}\in B_{s-p}^-,\qquad
D=\begin{pmatrix} D_{11} & D_{12}\\ 0 & D_{22}\end{pmatrix}\in B_{r-p},
\]
The condition $CJ_t=J_tD$ translates to 
\[
C_{11}=D_{11},\qquad C_{21}=0,\qquad D_{12}=0.
\]
Since $C_{11}$ is a lower triangular matrix whereas $D_{11}$ is a upper-triangular matrix, from $C_{11}=D_{11}$ we get that both are diagonal matrices. Again, since $C_{22}$ and $D_{22}$ are lower and upper triangular matrices of sizes $(s-p-t) \times (s-p-t)$ and $(r-p-t) \times (r-p-t)$, respectively, we obtain \[
\dim (Stab_{B'}(J_t)) \;=\; t \;+\; \frac{(r-p-t)(r-p-t+1)}{2} \;+\; \frac{(s-p-t)(s-p-t+1)}{2}-2.
\]
On the other hand, we have \[
dim (B')=\frac{(r-p)(r-p+1)}{2}+\frac{(s-p)(s-p+1)}{2}-2.
\]
Thus,
\begin{align*}
\dim(B' \cdot J_t) 
&= dim (B')- dim(Stab_{B'}(J_t)) \\
&= t(r + s - 2p - t) \\
&= dim(\{ A \in M_{s-p,r-p} : \text{rank}(A) \leq t \}).
\end{align*}
Note that $C_{w_{s,r}}=M_{s-p,r-p}$ and $\lambda \gitquot M_{s-p, r-p}=\mathbb P(M_{s-p, r-p})$. 
Consider the quotient map $$\pi: M_{s-p,r-p}\setminus \{0\} \rightarrow \mathbb P(M_{s-p,r-p}).$$ Therefore, the orbit $B'.\pi(J_t)$ is an open and dense subvariety in $Z_t$. 
Hence, $Z_{t}$ is $SL(r-p, \mathbb C) \times SL(s-p, \mathbb C)$-spherical. This completes the proof. 
\end{proof}

Now we recall the notion of wonderful varieties, due to Luna \cite{Luna96}.
\begin{definition}[Luna] Let $G$ be a connected reductive group.
A smooth, projective algebraic $G$-variety $X$ is called a {\it wonderful $G$-variety} if it satisfies the following properties:
\begin{enumerate}
    \item $X$ has an open dense $G$-orbit (denoted by $X_{G}^{0}$), i.e., $X=\overline{G\cdot x}$ for some $x\in X$.
    \item The complement $X\setminus X_{G}^{0}$ is a smooth normal crossing divisor (i.e., $$X\setminus X_{G}^{0}=X_{1}\cup X_{2}\cup\cdots\cup X_{r},$$ where $X_{j}\subseteq X$ is irreducible, smooth, $G$-divisor such that $X_{1},\ldots, X_{r}$ intersect transversally). We call $X_{i}$'s the boundary divisors.
    \item For every $I\subseteq\{1,\ldots,r\}$, $$\displaystyle\bigcap_{i\in I}X_{i}=\overline{G\cdot x_{I}}$$ for some $x_{I}\in X$. Moreover,  all the closures of $G$-orbits in $X$ are of this form.
\end{enumerate}
\end{definition}

\begin{definition}
The rank of a wonderful $G$-variety $X$ is defined to be the number of boundary divisors of $X$.
\end{definition}

Akhiezer \cite{Akh} and Brion \cite{Br1}, independently, established that wonderful varieties of rank $1$ can be characterized by the existence of a completion by homogeneous divisors, see \cite[Proposition 30.4]{Tim}. In our case, we obtain the following.
\begin{corollary}
Let $n\geq 5$ and $r = s = 2$, the GIT quotient $X$ is a wonderful $H$-variety with exactly two orbits. In particular, the rank of $X$ is one.
\end{corollary}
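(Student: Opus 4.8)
The plan is to reduce to the structural description of Theorem~\ref{thm:structure} and then verify the defining properties of a wonderful variety directly, the only real input being the smoothness of a classical quadric. First I would note that for $n \geq 5$ and $r=s=2$ one has $p = \lfloor \tfrac{rs}{n}\rfloor = \lfloor \tfrac{4}{n}\rfloor = 0$, so the hypotheses of Theorem~\ref{thm:structure} are met and
\[
X \;\cong\; H \times_{P_{2,2}} \mathbb{P}(M_{2,2}),
\]
where $H = SL(2,\mathbb{C}) \times SL(n-2,\mathbb{C})$ and $\mathbb{P}(M_{2,2}) \cong \mathbb{P}^3$, since $M_{2,2}$ is four-dimensional. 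By Proposition~\ref{prop:Starti} the number of $H$-orbits equals $\min\{r-p,s-p\} = 2$; concretely these are the open dense orbit $O_2$ of rank-$2$ matrices and the closed orbit $O_1$ of rank-$1$ matrices. Moreover, by Proposition~\ref{prop:sphericality}, $X$ is $H$-spherical, so the open orbit $O_2$ is a spherical homogeneous space.

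Next I would establish smoothness and identify the boundary. As a homogeneous fibre bundle over the smooth projective base $H/P_{2,2}$ with fibre the smooth projective variety $\mathbb{P}^3$, the total space $X$ is itself smooth and projective. The complement of the open orbit is
\[
X \setminus O_2 \;=\; \overline{O_1} \;=\; X_1 \;=\; H \times_{P_{2,2}} Z_1,
\qquad Z_1 = \{\,[A]\in\mathbb{P}(M_{2,2}) : \det A = 0\,\},
\]
the rank $\leq 1$ locus, which in the $2\times 2$ case coincides with the projective determinantal locus. The key point is that $Z_1$ is smooth: since $\det$ is a nondegenerate quadratic form on $M_{2,2}$, the locus $Z_1$ is a smooth quadric surface in $\mathbb{P}^3$ (indeed isomorphic to $\mathbb{P}^1\times\mathbb{P}^1$), hence irreducible of codimension one. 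Consequently $X_1 = H\times_{P_{2,2}}Z_1$ is a smooth irreducible $H$-stable divisor, and it consists of the single $H$-orbit $O_1$, hence is homogeneous.

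Finally I would assemble the wonderful structure. Because there is exactly one boundary divisor, the normal-crossing condition is automatic, and condition~(3) in the definition reduces to the identities $X = \overline{O_2}$ (for $I=\emptyset$) and $X_1 = \overline{O_1}$ (for $I=\{1\}$), both of which hold. Thus $X$ is a smooth projective spherical $H$-variety completing the homogeneous space $O_2$ by the single smooth homogeneous divisor $X_1$, and the Akhiezer--Brion characterization of rank-one wonderful varieties (\cite[Proposition~30.4]{Tim}) yields that $X$ is a wonderful $H$-variety of rank one with exactly two orbits. I expect the only genuine verification to be the smoothness of the boundary $X_1$; this rests entirely on the classical fact that the determinant is a nondegenerate quadratic form on $2\times 2$ matrices, so that $Z_1$ is a smooth quadric and the single-divisor hypothesis of the characterization is satisfied.
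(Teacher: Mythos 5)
Your proposal is correct and follows essentially the same route as the paper: count the $H$-orbits via Proposition~\ref{prop:Starti}, observe that the complement of the open orbit is a single closed homogeneous divisor, and invoke the Akhiezer--Brion characterization of rank-one wonderful varieties from \cite[Proposition 30.4]{Tim}. Your additional verifications --- that $p=0$ for $n\geq 5$, that $X$ is smooth as a homogeneous bundle, and that the boundary $X_1 = H\times_{P_{2,2}}Z_1$ is smooth because $Z_1$ is the nondegenerate determinantal quadric in $\mathbb{P}(M_{2,2})\cong\mathbb{P}^3$ --- are details the paper leaves implicit rather than a different argument.
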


\begin{proof}
For $r = s = 2$, the quotient $X$ has exactly two orbits by Proposition~\ref{prop:Starti}. Moreover, one orbit is open, and its complement is a closed orbit that is also a divisor. Therefore, the quotient $X$ is a wonderful $H$-variety, see  \cite[Proposition 30.4]{Tim}.
\end{proof}

For $r=s=2$ and $n=3$ or $4$, we have $p=1$. In these two cases, we describe the quotient explicitly. 

\begin{proposition} Let $r=s=2$.
\begin{enumerate}
    \item For $n=3$, we have $X \cong \mathbb P^1$.
    \item For $n=4$, we have $X \cong \mathbb P^3$.
\end{enumerate}
   \end{proposition}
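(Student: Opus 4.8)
The plan is to identify $X$ with the homogeneous spectrum of the ring of $\lambda$--invariants in the Plücker coordinate ring of $G(r,n)$ and to compute this invariant ring directly in each of the two small cases. By Mumford's construction,
\[
X \;=\; \lambda \gitquot G(r,n)_{\lambda}^{ss}(\mathcal L(\varpi_r)) \;=\; \operatorname{Proj}\Big(\bigoplus_{d\ge 0} H^0\big(G(r,n),\mathcal L(d\varpi_r)\big)^{\lambda}\Big),
\]
and the right--hand ring is the subring $R^{\lambda}$ of $\lambda$--invariants in the Plücker ring $R$, which is generated in degree one by the Plücker coordinates $p_{ij}$, with a single quadratic relation when $n=4$ and no relation when $n=3$. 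Since $\lambda(t)=\operatorname{diag}(t^{n-s},\dots,t^{n-s},t^{-s},\dots,t^{-s})$ acts on $e_i$ with weight $n-s$ for $i\le s$ and $-s$ for $i>s$, each coordinate $p_{ij}=e_i\wedge e_j$ carries the sum of these two weights; the first step is simply to tabulate them. Note that for $n=4$ we have $p=1$ while $r+s-n=0$, so neither $p=0$ nor $p=r+s-n$ holds and Theorem~\ref{thm:structure} does not apply: the invariant--ring computation is unavoidable here.

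For $n=4$ (with $s=2$) the weights are $4$ on $p_{12}$, $-4$ on $p_{34}$, and $0$ on each of $p_{13},p_{14},p_{23},p_{24}$. A monomial in the $p_{ij}$ is $\lambda$--invariant precisely when the exponents of $p_{12}$ and $p_{34}$ agree, so every invariant is a polynomial in $p_{13},p_{14},p_{23},p_{24}$ and the product $p_{12}p_{34}$. I would then use the Plücker relation $p_{12}p_{34}=p_{13}p_{24}-p_{14}p_{23}$ to eliminate $p_{12}p_{34}$, showing that $R^{\lambda}$ is generated in degree one by the four weight--zero coordinates. Since $R$ is a domain of Krull dimension $5$ and $\lambda$ acts with one--dimensional generic orbits, $R^{\lambda}$ is a domain of Krull dimension $4$; a domain of dimension $4$ generated by four elements must be a polynomial ring, whence $R^{\lambda}\cong\mathbb C[p_{13},p_{14},p_{23},p_{24}]$ and $X=\operatorname{Proj}(R^{\lambda})\cong\mathbb P^3$.

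For $n=3$ (with $s=2$) the variety $G(2,3)\cong\mathbb P^2$ carries no Plücker relation, and the weights are $2$ on $p_{12}$ and $-1$ on each of $p_{13},p_{23}$. The invariant monomials of minimal positive degree are $u=p_{12}p_{13}^{2}$, $v=p_{12}p_{13}p_{23}$, $w=p_{12}p_{23}^{2}$ (all of degree $3$), every invariant is a polynomial in $u,v,w$, and these satisfy the single relation $uw=v^{2}$. Hence $R^{\lambda}$, after rescaling the grading, is the coordinate ring of a smooth conic, so $X=\operatorname{Proj}(R^{\lambda})\cong\mathbb P^1$. Alternatively, since here $p=r+s-n=1$, Theorem~\ref{thm:structure} applies and gives $X\cong H\times_{P_{s,r}}\mathbb P(M_{1,1})=SL(2)/P_{s,r}$, a one--dimensional projective homogeneous $SL(2)$--space, which must be $\mathbb P^1$.

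The hard part will be the generation statement for $n=4$: one must verify both that $R^{\lambda}$ is generated in degree one—this is exactly where the Plücker relation is invoked to absorb every occurrence of $p_{12}p_{34}$—and that the four generators are algebraically independent, which I would deduce from the Krull--dimension count rather than by chasing relations directly. Once these two facts are secured, the identifications $X\cong\mathbb P^3$ and (via the conic) $X\cong\mathbb P^1$ follow immediately from the definition of the $\operatorname{Proj}$ quotient.
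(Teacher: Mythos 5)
Your main line is the same as the paper's: tabulate the $\lambda$-weights on the Pl\"ucker coordinates, compute the invariant ring, and read off $\operatorname{Proj}$. For $n=3$ your three invariants $u,v,w$ with the single relation $uw=v^2$ (the conic, i.e.\ the quadratic Veronese image of $\mathbb{P}^1$), and for $n=4$ the elimination of $p_{12}p_{34}$ through the Pl\"ucker relation, are exactly the paper's computations, so the conclusions $X\cong\mathbb{P}^1$ and $X\cong\mathbb{P}^3$ stand.

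One step in your $n=4$ argument is, however, justified by a false general principle: it is not true that one-dimensional generic orbits force $\dim R^{\lambda}=\dim R-1$. For the scaling action of $\mathbb{G}_m$ on $\mathbb{C}^2$ the generic orbits are one-dimensional, yet the invariant ring is $\mathbb{C}$, of dimension $0$; the dimension of the invariant ring is controlled by closed orbits, not by generic ones. The inference is salvageable here because the generic orbit \emph{is} closed: a general point $\hat x$ of the affine cone over $G(2,4)$ has nonzero components in both the positive and the negative weight spaces, so $\lambda(t)\hat x$ diverges as $t\to 0$ and as $t\to\infty$, and stability then gives the dimension count you want. Even simpler, you can verify algebraic independence of $p_{13},p_{14},p_{23},p_{24}$ directly: the projection of the affine cone onto the zero-weight space $\mathbb{C}^4$ is surjective (given any $(x_1,x_2,x_3,x_4)$, solve $p_{12}p_{34}=x_1x_4-x_2x_3$ for $p_{12},p_{34}$), so no nonzero polynomial in these four coordinates vanishes on the cone. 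With either repair your proof is complete---and indeed more careful than the paper, which asserts $R^{\lambda}=\mathbb{C}[x_1,x_2,x_3,x_4]$ without addressing independence. Your alternative argument for $n=3$ is also correct and is a route the paper does not take: since $p=1=r+s-n$, Theorem \ref{thm:structure} does apply, $\mathbb{P}(M_{1,1})$ is a point, and Lemma \ref{lem:parabolic} (valid since $r+s\neq n$) shows that $P_{s,r}$ is a proper maximal parabolic, i.e.\ a Borel, of $H\cong SL(2,\mathbb{C})$, whence $X\cong SL(2,\mathbb{C})/P_{s,r}\cong\mathbb{P}^1$; note that you do need the lemma here to rule out $P_{s,r}=H$, since otherwise the induced space could a priori be a point.
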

\begin{proof} (1) For $n=3$ and $r=s=2$, we have $p=1$. Here we set $\lambda=3\lambda_2$ and so we have 
 \[
\lambda(t)=\operatorname{diag}(t,t,t^{-2}).
\]
We now explicitly describe the quotient $$
X :=  \lambda\gitquot G(2,3)_{\lambda}^{ss}(\mathcal{L}(3\varpi_2)).
$$ The Grassmannian $G(2,3)$ has dimension $2$ and coincides with $\mathbb P^2$ under the Plücker embedding.  
Its homogeneous coordinates are
\[
[p_{12}:p_{13}:p_{23}].
\]
The weights of $\lambda_2$ are
\[
\operatorname{wt}(p_{12})=2,\qquad 
\operatorname{wt}(p_{13})=\operatorname{wt}(p_{23})=-1.
\]
The homogeneous coordinate ring of $G(2,3)$ is $
R = \mathbb C[p_{12},p_{13},p_{23}]$
and the $\lambda$-invariant monomials $p_{12}^a p_{13}^b p_{23}^c$ satisfy
\[
2a - b - c = 0, \qquad a,b,c \in \mathbb Z_{\geq 0}.
\]
Thus, the invariant ring is 
\[
R^{\lambda(\mathbb G_m)} \;=\; \mathbb{C}[p_{12}p_{13}^2,\, p_{12}p_{13}p_{23},\, p_{12}p_{23}^2]
\]
subject to the quadratic relation,
\[
(p_{12}p_{13}p_{23})^2 = (p_{12}p_{13}^2)(p_{12}p_{23}^2).
\]
This is the homogeneous coordinate ring of $\mathbb{P}^1$ under the quadratic Veronese embedding
\[
[x:y] \mapsto [x^2:xy:y^2]\;\in\; \mathbb{P}^2.
\]
Therefore,
\[
(X, \mathcal M) \;\cong\; (\mathbb{P}^1, \mathcal O(2)), 
\] where $\mathcal M$ is the descent of $\mathcal L(3\varpi_2)$ to $X$.

(2) For $n=4$ and $r=s=2$, we have $p=1$. Here we set $\lambda=4\lambda_2$ and so we have 
 \[
\lambda(t)=\operatorname{diag}(t^2,t^2,t^{-2},t^{-2}).
\] We describe the quotient $$
X :=  \lambda\gitquot G(2,4)_{\lambda}^{ss}(\mathcal{L}(2\varpi_2)).
$$
The Pl\"ucker coordinates $p_{ij}$ on $G(2,4)\subset \mathbb P^5$ transform with weights
\[
\operatorname{wt}(p_{12})=4,\qquad \operatorname{wt}(p_{34})=-4,\qquad
\operatorname{wt}(p_{13})=\operatorname{wt}(p_{14})=\operatorname{wt}(p_{23})=\operatorname{wt}(p_{24})=0.
\]
We set
\[
x_1:=p_{13},\quad x_2:=p_{14},\quad x_3:=p_{23},\quad x_4:=p_{24}.
\]
The Pl\"ucker relation 
$
p_{12}p_{34} - p_{13}p_{24} + p_{14}p_{23} \;=\; 0
$
transforms to
\[
p_{12}p_{34} \;=\; x_1x_4 - x_2x_3 \in \mathbb C[x_1,x_2,x_3,x_4].
\]
Let $R$ be the homogeneous coordinate ring of $G(2,4)$.  
Then the homogeneous coordinate ring of $X$ is 
\[
R^{\lambda(\mathbb G_m)} = \mathbb C[x_1,x_2,x_3,x_4].
\]
We then have,
\[
(X, \mathcal M) \;=\; \operatorname{Proj}\big(R^{\lambda(\mathbb G_m)}\big)
\;=\; \operatorname{Proj}\big(\mathbb C[x_1,x_2,x_3,x_4]\big)
\;=\; (\mathbb P^3, \mathcal O(1)), 
\] where $\mathcal M$ is the descent of $\mathcal L(2\varpi_2)$.  
\end{proof}

   \section{Geometric Properties of $X$}

   In this section, we study the geometric properties of $X$. Throughout this section we assume that either $p=0$ or $p=r+s-n$ with $r+s-n\geq 0$. So we have $X \cong H \times_{P_{s,r}} Y$, where $Y := \mathbb{P}(M_{s-p,\,r-p})$, thanks to Theorem \ref{thm:structure}. We set
\[
k:=
\begin{cases}
r+s, & \text{if } r+s\le n-1,\\[4pt]
r+s-n, & \text{if } r+s\ge n+1.
\end{cases}
\]
When $r+s=n$ we regard this as the boundary case. We first prove a general result.

\subsection{Picard group of $X$ and Fanoness} In this subsection, we compute the Picard group of $X$ and prove that $X$ is Fano. Recall that \[
w_{s,r}=(s_s \cdots s_{p+1})(s_{s+1}\cdots s_{p+2})\cdots(s_{s+r-p-1}\cdots s_r),
\]
where $p=\lfloor \frac{rs}{n}\rfloor$ and $P_{s,r}:=\operatorname{Stab}_G\bigl(X(w_{s,r})\bigr)\cap H$, where $H\cong SL(s,\mathbb C)\times SL(n-s, \mathbb C)$.

\begin{lemma}\label{lem:parabolic}
    The subgroup $P_{s,r}$ is a maximal parabolic subgroup of $H$ if $r+s \neq n$. When $r+s=n$, we have $P_{s,r}=H$.
\end{lemma}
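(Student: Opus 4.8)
The plan is to identify $\operatorname{Stab}_G(X(w_{s,r}))$ with a standard parabolic subgroup $P_J$ of $G=SL(n,\mathbb C)$ and then pass to $H$. Since $X(w_{s,r})=\overline{Bw_{s,r}P/P}$ is $B$-stable, its stabilizer contains $B$ and is therefore a standard parabolic $P_J$, whose complement is
\[
S\setminus J=\{\alpha_i\in S:\ s_iw_{s,r}>w_{s,r}\ \text{and}\ s_iw_{s,r}\in W^{S\setminus\{\alpha_r\}}\},
\]
the set of simple reflections whose minimal parabolic $P_i$ strictly enlarges $X(w_{s,r})$. As $S_H=S\setminus\{\alpha_s\}$ and we work in type $A$, one checks that $P_J\cap H$ is the standard parabolic of $H$ whose Levi simple roots are $J\cap S_H=J\setminus\{\alpha_s\}$; equivalently, the simple roots of $H$ absent from the Levi of $P_{s,r}$ are exactly those in $(S\setminus J)\cap S_H$. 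Thus it suffices to compute $S\setminus J$ and to check whether $\alpha_s$ belongs to it.

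To compute $S\setminus J$ I would use the Grassmannian combinatorics. Writing $w\in W^{S\setminus\{\alpha_r\}}$ as the $r$-subset $I_w=\{w(1)<\cdots<w(r)\}$, left multiplication by $s_i$ interchanges the values $i$ and $i+1$, and a short case analysis shows that $\alpha_i\in S\setminus J$ if and only if $i\in I_w$ while $i+1\notin I_w$. To pin down the subset of $w_{s,r}$ I would invoke the weight identity $\langle w(\varpi_r),\lambda\rangle=na-sr$ with $a=|I_w\cap\{1,\dots,s\}|$ from the proof of Lemma \ref{ss-general}, together with the minimality of $X(w_{s,r})$ among Schubert varieties carrying semistable points (from \cite{GK}); this forces $a=p$ and selects the componentwise-smallest such subset,
\[
I_{w_{s,r}}=\{1,2,\dots,p\}\ \cup\ \{s+1,s+2,\dots,s+r-p\}.
\]
The same subset can be read off by directly expanding the given reduced word of $w_{s,r}$.

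Reading the block structure of $I_{w_{s,r}}$, the first block contributes $\alpha_p$ exactly when $p\ge 1$ and the second block contributes $\alpha_{s+r-p}$ exactly when $s+r-p\le n-1$, so
\[
S\setminus J=\{\alpha_p:\ p\ge 1\}\ \cup\ \{\alpha_{s+r-p}:\ s+r-p<n\}.
\]
Since $p<s$, the index $s$ lies in neither block, so $\alpha_s\in J$ and $(S\setminus J)\cap S_H=S\setminus J$; hence the number of simple roots of $H$ missing from $P_{s,r}$ is precisely $|S\setminus J|$. Now I would substitute the standing hypothesis that $p=0$ or $p=r+s-n\ge 0$. If $r+s=n$, this hypothesis forces $p=0$ and $s+r-p=n$, so $S\setminus J=\emptyset$, $J=S$, and $P_{s,r}=G\cap H=H$. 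If $r+s\ne n$, then either $p=0$ (forcing $r+s<n$, so only $\alpha_{s+r}$ survives) or $p=r+s-n\ge 1$ (forcing $s+r-p=n$, so only $\alpha_p$ survives); in either case $|S\setminus J|=1$ and $P_{s,r}$ is a maximal parabolic subgroup of $H$.

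The main obstacle is the boundary bookkeeping rather than any deep input: one must treat carefully the degenerate ends of the two blocks — the empty first block when $p=0$, and the absence of a reflection ``$\alpha_n$'' when the second block reaches $n$ — and verify the elementary inequalities $p<s$ and $p<r$ (both immediate since $r,s\le n-1$), which guarantee $\alpha_p,\alpha_{s+r-p}\neq\alpha_s$ so that the surviving roots genuinely lie in $S_H$. Once $I_{w_{s,r}}$ is identified, the remaining case analysis is routine.
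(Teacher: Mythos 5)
Your proof is correct, and at the top level it follows the same strategy as the paper: realize $\operatorname{Stab}_G(X(w_{s,r}))$ as a standard parabolic $P_J$, compute $S\setminus J$ by testing simple reflections, and then intersect with $H$. However, the test you use differs from the paper's, and the difference matters. The paper asserts that $s_i\cdot X(w_{s,r})=X(w_{s,r})$ if and only if $w_{s,r}^{-1}s_iw_{s,r}\in W_P$; that condition is the criterion for $s_i$ to fix the $T$-fixed point $w_{s,r}P/P$, and it is sufficient but not necessary for stabilizing the Schubert variety. It genuinely fails at $i=s$: since $p<s$ and $p<r$ give $s\notin I_{w_{s,r}}$ and $s+1\in I_{w_{s,r}}$, one has $s_sw_{s,r}W_P<w_{s,r}W_P$, so $s_s$ stabilizes $X(w_{s,r})$ even though $w_{s,r}^{-1}s_sw_{s,r}\notin W_P$. (Concretely, for $n=5$, $r=s=2$, the variety $X(w_{s,r})$ is the set of $2$-planes contained in the span of $e_1,\dots,e_4$, which $s_2$ visibly preserves, while $w_{s,r}^{-1}s_2w_{s,r}$ is the transposition $(1\,4)\notin W_P$.) Taken literally, the paper's criterion would therefore produce two ``exceptional'' indices, namely $s$ together with $r+s$ (resp.\ $r+s-n$), and its uniqueness claim becomes valid only after replacing that test by the Bruhat-order test $s_iw_{s,r}W_P>w_{s,r}W_P$ --- which is exactly your criterion ($s_iw_{s,r}>w_{s,r}$ and $s_iw_{s,r}\in W^{S\setminus\{\alpha_r\}}$, i.e., $i\in I_{w_{s,r}}$ and $i+1\notin I_{w_{s,r}}$). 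Your identification $I_{w_{s,r}}=\{1,\dots,p\}\cup\{s+1,\dots,s+r-p\}$, the resulting $S\setminus J\subseteq\{\alpha_p,\alpha_{s+r-p}\}$, the check that $\alpha_s\in J$ (so that passing to $P_J\cap H$ loses no simple roots and maximality is read off from $|S\setminus J|$), and the case analysis under the standing hypothesis --- including that $r+s=n$ forces $p=0$, hence $I_{w_{s,r}}=\{n-r+1,\dots,n\}$ and $X(w_{s,r})=G(r,n)$ --- are all correct. In short, your argument is not merely an alternative route: it is the repaired version of the paper's computation, and it also makes explicit where the standing hypothesis $p=0$ or $p=r+s-n$ is actually used, which the paper leaves implicit.
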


\begin{proof}
    Recall that $P_{s,r} = \text{Stab}_G(X(w_{s,r})) \cap H$. 
    For any $1 \leq i \leq n-1$, we have 
    \[s_i.X(w_{s,r})=X(w_{s,r})\quad\text{if and only if}\quad w_{s,r}^{-1}s_iw_{s,r} \in W_P,\] where $P=P_{S \setminus \{\alpha_r\}}$.

If $r+s\leq n-1$, then the unique index $i$ for which
        $w_{s,r}^{-1}s_i w_{s,r}\notin W_P$ is $i=r+s$. Hence,
        $\operatorname{Stab}_G(X(w_{s,r}))=P_J$ with
        $J=S\setminus\{\alpha_{r+s}\}$.

If $r+s\geq n+1$, the same combinatorial computation (taking indices
        modulo $n$ and then reinterpreting them in $\{1,\dots,n-1\}$) shows
        that the unique exceptional index is $k=r+s-n$, so
        $\operatorname{Stab}_G(X(w_{s,r}))=P_J$ with
        $J=S\setminus\{\alpha_{r+s-n}\}$.
    
    If $r \neq n-s$, we have $\text{Stab}_G(X(w_{s,r}))=P_J$ where $J=S \setminus \{\alpha_{k}\}$. Therefore, $$P_{s,r} = \text{Stab}_G(X(w_{s,r})) \cap H$$ is a maximal parabolic subgroup of $H$. In fact $P_{s,r}=SL(s, \mathbb C) \times P'$, where $P'$ is the maximal parabolic subgroup of $SL(n-s, \mathbb C)$ corresponding to the simple root $\alpha_{k}$. 

On the other hand, when $r=n-s$ we have $w_{s,r}=w_0^{S \setminus \{\alpha_r\}}$. Therefore, the stabilizer of $X(w_{s,r})=G(r,n)$ is $SL(n,\mathbb C)$. Hence, we have $P_{s,r}=H$.
\end{proof}

\begin{lemma}\label{lem:linebundle}
Let $\mathcal{L}$ be any line bundle on $X$.
\begin{enumerate}
    \item If $r \neq n - s$, then there exist integers $a, b \in \mathbb{Z}$ such that
    \[
        \mathcal{L} \;\simeq\; \mathcal{L}(b\varpi_k) \otimes \mathcal{O}_Y(a).
    \]
    \item If $r = n - s$, then
    \[
        \mathcal{L} \;\simeq\; \mathcal{O}_Y(a)
        \qquad \text{for a unique } a \in \mathbb{Z}.
    \]
\end{enumerate}
\end{lemma}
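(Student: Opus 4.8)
The plan is to read off $\operatorname{Pic}(X)$ from the homogeneous-bundle description $X \cong H \times_{P_{s,r}} Y$ with $Y = \mathbb{P}(M_{s-p,r-p}) = \mathbb{P}(V)$ provided by Theorem~\ref{thm:structure}, distinguishing the two cases of Lemma~\ref{lem:parabolic}.

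I would dispatch (2) first. When $r = n-s$, Lemma~\ref{lem:parabolic} gives $P_{s,r} = H$, so $X \cong Y$; moreover $r+s=n$ forces $p = r+s-n = 0$, whence $Y = \mathbb{P}(M_{s,r})$ is an honest projective space of dimension $rs-1 > 0$. Its Picard group is the free rank-one group generated by $\mathcal{O}_Y(1)$, so every line bundle is $\mathcal{O}_Y(a)$ for a unique $a \in \mathbb{Z}$, proving (2).

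For (1), when $r \neq n-s$, the projection $\pi : X \to B := H/P_{s,r}$ realizes $X$ as the projectivization $\mathbb{P}(\mathcal{E})$ of the homogeneous vector bundle $\mathcal{E} := H \times_{P_{s,r}} V$ over $B$. The first step is to identify $\operatorname{Pic}(B)$: by Lemma~\ref{lem:parabolic}, $P_{s,r}$ is a maximal parabolic subgroup of the simply connected semisimple group $H$, so $B$ is a generalized flag variety with $\operatorname{Pic}(B) \cong X^*(P_{s,r}) \cong \mathbb{Z}$, freely generated by $\mathcal{L}(\varpi_k)$, the line bundle attached to the fundamental weight $\varpi_k$ of the unique simple root $\alpha_k$ omitted by the Levi of $P_{s,r}$ (the standard description of line bundles on $G/P$). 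The second step is the projective-bundle formula: provided the fibre is positive-dimensional, i.e. $\operatorname{rank}(\mathcal{E}) = (s-p)(r-p) \geq 2$,
\[
\operatorname{Pic}(X) = \operatorname{Pic}(\mathbb{P}(\mathcal{E})) \;\cong\; \pi^*\operatorname{Pic}(B) \,\oplus\, \mathbb{Z}\cdot \mathcal{O}_{\mathbb{P}(\mathcal{E})}(1),
\]
and the relative hyperplane bundle restricts on each fibre to $\mathcal{O}_Y(1)$. Combining the two steps gives $\operatorname{Pic}(X) = \mathbb{Z}\,\mathcal{L}(\varpi_k) \oplus \mathbb{Z}\,\mathcal{O}_Y(1)$, so every line bundle has the asserted form $\mathcal{L}(b\varpi_k) \otimes \mathcal{O}_Y(a)$.

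The delicate points are two. First, the relative $\mathcal{O}_{\mathbb{P}(\mathcal{E})}(1)$ is intrinsically defined only up to twist by bundles pulled back from $B$, so one must fix $\mathcal{O}_Y(1)$ by a definite choice — most cleanly the $H$-linearization induced by the $P_{s,r}$-module structure on $V$. Equivalently, one can run the whole computation equivariantly, using that $H$ is simply connected semisimple (so $\operatorname{Pic}(X) = \operatorname{Pic}^H(X) \cong \operatorname{Pic}^{P_{s,r}}(Y)$) together with the split exact sequence
\[
0 \to X^*(P_{s,r}) \to \operatorname{Pic}^{P_{s,r}}(\mathbb{P}(V)) \to \operatorname{Pic}(\mathbb{P}(V)) \to 0,
\]
which simultaneously produces the character generator $\varpi_k$ and the twist generator $\mathcal{O}_Y(1)$, and handles both cases of the lemma uniformly. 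Second, the degenerate case $(s-p)(r-p) = 1$ (fibre a point) must be acknowledged: there $\mathcal{O}_Y(a)$ is trivial and $X = B$, so $\mathcal{L} \simeq \mathcal{L}(b\varpi_k)$ and statement (1) still holds with $a = 0$ — consistent with (1) asserting only existence, not uniqueness, of $(a,b)$, in contrast to (2). The main obstacle is thus not the bundle formula itself but the bookkeeping needed to pin the abstract relative $\mathcal{O}(1)$ and the base generator to the concrete objects $\mathcal{O}_Y(1)$ and $\mathcal{L}(\varpi_k)$ named in the statement, for which the explicit form of $P_{s,r}$ from Lemma~\ref{lem:parabolic} is essential.
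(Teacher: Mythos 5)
Your proposal is correct, and it shares the paper's skeleton: both proofs rest on the fibration $\pi\colon X\cong H\times_{P_{s,r}}Y\to H/P_{s,r}$ from Theorem~\ref{thm:structure} and on Lemma~\ref{lem:parabolic}, which settles case (2) outright ($P_{s,r}=H$, so $X\cong Y$) and makes $H/P_{s,r}$ a flag variety of Picard rank one in case (1). The genuine difference is the tool used for the splitting step. The paper quotes \cite[Proposition~2]{BD20}, which yields $\operatorname{Pic}(X)\cong \operatorname{Pic}(H/P_{s,r})\oplus\operatorname{Pic}(Y)$ directly for a Zariski-locally trivial fibration with smooth projective rational base and fibre; you instead observe that $X=\mathbb{P}(\mathcal{E})$ for the homogeneous vector bundle $\mathcal{E}=H\times_{P_{s,r}}M_{s-p,r-p}$ and apply the classical projective-bundle formula, reinforced by the equivariant identification $\operatorname{Pic}(X)\cong\operatorname{Pic}^{H}(X)\cong\operatorname{Pic}^{P_{s,r}}(Y)$ and the character sequence for $P_{s,r}$. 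Your route is more elementary and self-contained, and it supplies two details the paper leaves implicit: a canonical normalization of the class $\mathcal{O}_Y(1)$ as a bundle on $X$ (via the $P_{s,r}$-linearization of $M_{s-p,r-p}$), without which the expression $\mathcal{L}(b\varpi_k)\otimes\mathcal{O}_Y(a)$ is defined only up to twists pulled back from the base; and the degenerate case $(s-p)(r-p)=1$ (e.g.\ $r=s=1$), where the rank-two splitting fails but statement (1) still holds because it asserts only existence, not uniqueness, of $(a,b)$. What the paper's citation buys in exchange is brevity and applicability to fibrations whose fibre is not a projective space, but nothing in this lemma requires that generality.
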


\begin{proof}
By Theorem~\ref{thm:structure}, we have $X \cong H \times_{P_{s,r}} Y$. Thus, $X$ is a (Zariski) locally trivial fibration with fiber $Y$.
Since both $H/P_{s,r}$ and $Y$ are smooth projective rational varieties, and the Picard group $Pic(Y)$ is a projective $\mathbb{Z}$-module, it follows from \cite[Proposition 2]{BD20} that there is an isomorphism
$$
Pic(X) \cong Pic(H/P_{s,r}) \oplus Pic(Y).
$$
Now, since $H = SL(s, \mathbb C) \times SL(n - s, \mathbb C)$ and $Y = \mathbb{P}(M_{s-p, r-p})$, the result follows from Lemma \ref{lem:parabolic}.

When $r=n-s$, $P_{s,r}=H$, $X\cong H\times_H Y\cong Y$,
and every line bundle on $X$ is the pullback of a unique power of the tautological
bundle on $Y$:
\[
\mathcal L \;\simeq\; \mathcal O_Y(a)\quad\text{for a unique} \quad a\in\mathbb Z.
\]
\end{proof}

\begin{corollary}\label{cor:Pic}
    The Picard group of $X$ is given by
\[
\operatorname{Pic}(X) \;\cong\;
\begin{cases}
\mathbb{Z} \oplus \mathbb Z, & \text{if } r \neq n-s, \\[6pt]
\mathbb{Z}, & \text{if } r = n-s.
\end{cases}
\]
\end{corollary}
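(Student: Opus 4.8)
The plan is to read this off directly from the structural result
Lemma~\ref{lem:linebundle}, since the Corollary is simply a restatement at the level of abelian groups. First I would recall the two cases established there: when $r\neq n-s$ the subgroup $P_{s,r}$ is a \emph{maximal} parabolic of $H$, and when $r=n-s$ we have $P_{s,r}=H$, so $X\cong Y$.

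In the generic case $r\neq n-s$, Lemma~\ref{lem:linebundle}(1) shows that every line bundle on $X$ is of the form $\mathcal L(b\varpi_k)\otimes\mathcal O_Y(a)$ for a \emph{unique} pair $(a,b)\in\mathbb Z^2$. The uniqueness is the content of the direct sum decomposition
\[
\operatorname{Pic}(X)\cong\operatorname{Pic}(H/P_{s,r})\oplus\operatorname{Pic}(Y)
\]
obtained from \cite[Proposition 2]{BD20}. I would then identify each summand with $\mathbb Z$: the factor $\operatorname{Pic}(Y)=\operatorname{Pic}\bigl(\mathbb P(M_{s-p,r-p})\bigr)\cong\mathbb Z$ is generated by $\mathcal O_Y(1)$, while $\operatorname{Pic}(H/P_{s,r})\cong\mathbb Z$ since $P_{s,r}$ is a maximal parabolic of the semisimple group $H$, whose associated flag variety has Picard rank equal to the number of omitted simple roots, namely one (generated by $\mathcal L(\varpi_k)$). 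Combining these gives $\operatorname{Pic}(X)\cong\mathbb Z\oplus\mathbb Z$.

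In the boundary case $r=n-s$, Lemma~\ref{lem:parabolic} gives $P_{s,r}=H$, so $X\cong H\times_H Y\cong Y=\mathbb P(M_{s-p,r-p})$, and Lemma~\ref{lem:linebundle}(2) says every line bundle is $\mathcal O_Y(a)$ for a unique $a$. Hence $\operatorname{Pic}(X)\cong\operatorname{Pic}(Y)\cong\mathbb Z$.

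I do not anticipate a genuine obstacle here, as the heavy lifting is already done in the two preceding lemmas; the only point requiring a word of justification is the claim that a maximal parabolic of a semisimple group yields a flag variety of Picard rank one, which is standard (the Picard group of $H/P_J$ is free of rank $|S_H\setminus J|$). The proof is therefore essentially a bookkeeping corollary: invoke Lemma~\ref{lem:linebundle}, record that each occurring Picard group is $\mathbb Z$, and read off the two cases.
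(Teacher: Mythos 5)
Your proposal is correct and follows essentially the same route as the paper: both read the result off from the decomposition $\operatorname{Pic}(X)\cong\operatorname{Pic}(H/P_{s,r})\oplus\operatorname{Pic}(Y)$ established in (the proof of) Lemma~\ref{lem:linebundle}, together with the fact from Lemma~\ref{lem:parabolic} that $P_{s,r}$ is a maximal parabolic of $H$ when $r\neq n-s$. The only cosmetic difference is in the boundary case $r=n-s$: you conclude directly from $P_{s,r}=H$ that $X\cong Y$ is a projective space, whereas the paper notes that this case forces $(r,s)=(1,n-1)$ or $(n-1,1)$ before drawing the same conclusion.
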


\begin{proof}
    The projection $\pi:X \rightarrow H/P_{s,r}$ gives a projective bundle with fibre $\mathbb P(M_{s-p, r-p})$. So from the proof of Lemma \ref{lem:linebundle}, we have  \[Pic(X) \simeq Pic(H/P_{s,r}) \oplus \mathbb Z.\] 
    Since $P_{s,r}$ is a maximal parabolic of $H$ when $r \neq n-s$, we have $Pic(H/P_{s,r})=\mathbb Z$.  

    So, $Pic(X) \simeq \mathbb Z \oplus \mathbb Z$ when $r \neq n-s$. When $r=n-s$, as $rs <n$, we have $(r,s)=(1,n-1)$ or $(r,s)=(n-1,1)$ and in these cases we have $Pic(X) \simeq \mathbb Z$.
\end{proof}

Recall that a smooth projective variety $Z$ is called \emph{Fano} 
if its anticanonical line bundle $\omega_Z^{-1}$ is ample.

\begin{theorem}\label{thm:fano}
    The variety $X$ is Fano.
\end{theorem}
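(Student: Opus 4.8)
The plan is to exhibit the anticanonical bundle explicitly using the homogeneous-bundle structure $X \cong H \times_{P_{s,r}} Y$ with $Y = \mathbb{P}(M_{s-p,r-p})$, and then verify ampleness via the Picard group computation of Corollary~\ref{cor:Pic}. Since $X$ is smooth projective (it is a projectivized bundle over the smooth projective homogeneous space $H/P_{s,r}$), the relative Euler sequence for the projection $\pi : X \to H/P_{s,r}$ yields a decomposition of $\omega_X^{-1}$ into a vertical (fiberwise) part and a horizontal (pullback from the base) part. First I would compute the relative anticanonical bundle $\omega_{X/(H/P_{s,r})}^{-1}$, which restricts on each fiber $\mathbb{P}(M_{s-p,r-p})$ to $\mathcal{O}_Y(\dim Y + 1) = \mathcal{O}_Y((s-p)(r-p))$, the anticanonical bundle of projective space. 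Then I would compute the pullback $\pi^*\omega_{H/P_{s,r}}^{-1}$ using the fact that the flag variety $H/P_{s,r}$ is itself Fano with anticanonical bundle a positive multiple of the ample generator of $\operatorname{Pic}(H/P_{s,r})$, together with the twist coming from the determinant of the relative tangent/normal data of the bundle.

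The key computation is the twisting term: writing $X = \mathbb{P}(E)$ for the rank-$(s-p)(r-p)$ vector bundle $E$ on $H/P_{s,r}$ associated to the $P_{s,r}$-module $M_{s-p,r-p}$, the relative Euler sequence gives
\[
\omega_{X/(H/P_{s,r})} \;\simeq\; \mathcal{O}_X(-\dim Y - 1) \otimes \pi^*(\det E),
\]
so that by the standard adjunction $\omega_X \simeq \omega_{X/(H/P_{s,r})} \otimes \pi^*\omega_{H/P_{s,r}}$ one obtains
\[
\omega_X^{-1} \;\simeq\; \mathcal{O}_X\bigl((s-p)(r-p)\bigr) \otimes \pi^*\bigl(\det E^{-1} \otimes \omega_{H/P_{s,r}}^{-1}\bigr).
\]
In the notation of Lemma~\ref{lem:linebundle}, this says $\omega_X^{-1} \simeq \mathcal{L}(b\varpi_k)\otimes \mathcal{O}_Y(a)$ with $a = (s-p)(r-p) = \dim Y + 1 > 0$, and $b$ determined by the base contribution. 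I would then invoke the ampleness criterion for line bundles on a projective bundle: $\mathcal{O}_X(a) \otimes \pi^*\mathcal{M}$ is ample provided $a > 0$ and $\mathcal{M}$ is such that the twist stays in the ample cone, which holds once $b > 0$.

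The main obstacle is verifying that the base-direction coefficient $b$ is strictly positive, i.e.\ that $\det E^{-1} \otimes \omega_{H/P_{s,r}}^{-1}$ is ample on $H/P_{s,r}$. Both factors are of the form $\mathcal{L}(m\varpi_k)$ for the single fundamental weight $\varpi_k$ generating $\operatorname{Pic}(H/P_{s,r}) \cong \mathbb{Z}$ (Lemma~\ref{lem:parabolic}), so positivity reduces to checking that the integer $m$ coming from $\omega_{H/P_{s,r}}^{-1}$ dominates the shift from $\det E$. Here $\omega_{H/P_{s,r}}^{-1}$ is computed as $\mathcal{L}(\sum_{\beta \in R^+ \setminus R_{P_{s,r}}^+}\beta)$, which is a large positive multiple of $\varpi_k$ since $H/P_{s,r}$ is Fano with index at least the number of roots moved, while $\det E$ contributes only the weight of $M_{s-p,r-p}$ as a $P_{s,r}$-module restricted to the generator. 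A clean way to settle this is to observe that the center of the Levi of $P_{s,r}$ acts on $M_{s-p,r-p}$ by a single character (used already in the proof of Proposition~\ref{prop:Starti}), so $\det E$ is an explicit multiple of $\varpi_k$, and then compare numerically with the anticanonical index of the Grassmannian factor $H/P_{s,r}$. In the boundary case $r = n-s$ where $P_{s,r} = H$ and $X \cong Y = \mathbb{P}(M_{s-p,r-p})$, the statement is immediate since projective space is Fano; I would dispose of this case first and then treat $r \neq n-s$ by the projective-bundle argument above.
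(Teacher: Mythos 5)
Your strategy is the same as the paper's (decompose $\omega_X^{-1}$ through $\pi\colon X\to H/P_{s,r}$ via adjunction and the relative Euler sequence, then check positivity of the vertical and horizontal factors), and you are in fact more careful than the paper in recording the twist by $\det E$. Nevertheless there is a genuine gap at the decisive step. The ampleness criterion you invoke --- that on a projective bundle $\mathcal{O}_X(a)\otimes\pi^*\mathcal{M}$ is ample once $a>0$ and the base coefficient $b$ is positive --- is false in general, and it fails in this very example. The ample cone of a projectivized bundle depends on the positivity of the bundle, and here that bundle is built from the tautological subbundle: for $p=0$ one has $H/P_{s,r}\cong G(r,n-s)$ by Lemma \ref{lem:parabolic}, and $X$ is the projectivization of (a twist of) $\mathbb{C}^s\otimes S$, where $S\subset\mathcal{O}^{\oplus (n-s)}$ is the tautological subbundle. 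Restricted to a line in $G(r,n-s)$ one has $S\cong\mathcal{O}(-1)\oplus\mathcal{O}^{\oplus(r-1)}$, so the nef threshold is exactly $1$: the bundle $\mathcal{L}(\varpi_k)\otimes\mathcal{O}_Y(1)$ is nef (because $S\otimes\det S^*\cong\wedge^{r-1}S^*$ is globally generated), while $\mathcal{O}_Y(1)$ itself is not nef. Consequently, with the homogeneous normalization of $\mathcal{O}_Y(1)$, the line bundle $\mathcal{L}(b\varpi_k)\otimes\mathcal{O}_Y(a)$ is ample if and only if $a>0$ \emph{and} $b>a$; for instance $\mathcal{L}(\varpi_k)\otimes\mathcal{O}_Y(2)$ has $a,b>0$ but is not even nef. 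So stopping at ``$b>0$'' cannot prove the theorem.

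Because of this, the computation you defer is not bookkeeping but the heart of the proof, and it must yield the sharper inequality $b>a$. Carrying it out for $p=0$: the module is $M_{s,r}\cong\mathbb{C}^s\otimes V_0^*$ with $V_0$ the tautological $P'$-module, $H\times_{P_{s,r}}\det V_0=\det S=\mathcal{L}(-\varpi_k)$, so the determinant twist contributes $s\varpi_k$; and $\omega_{H/P_{s,r}}^{-1}\cong\mathcal{L}((n-s)\varpi_k)$, the Fano index of $G(r,n-s)$ being $n-s$. Hence $b=s+(n-s)=n$ while $a=rs=\dim Y+1$, and $b>a$ reads $n>rs$, which holds precisely because $p=\lfloor rs/n\rfloor=0$; the case $p=r+s-n$ gives $a=(n-r)(n-s)<n=b$ by the same computation over $G(p,s)$, or reduces to $p=0$ via Proposition \ref{Prop:dual}. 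So the Fano property is a sharp numerical fact tied exactly to the standing hypothesis on $p$, and your outline never identifies the inequality that encodes it. Two further cautions. First, your formulas mix conventions: if $\mathbb{P}(E)$ denotes lines in $E$ with $E$ associated to $M_{s-p,r-p}$, then $\omega_{X/(H/P_{s,r})}\cong\mathcal{O}_X(-\dim Y-1)\otimes\pi^*\det E^{-1}$, not $\otimes\,\pi^*\det E$; taken literally your formulas give $b=n-2s$, which is negative for admissible data (e.g.\ $r=1$, $s=n-2$, $n\geq 5$), so the sign must be fixed before the inequality can even be tested. Second, for comparison, the paper's own proof follows the same route and is also too quick at this point (it treats $\omega_{X/(H/P_{s,r})}^{-1}$ as ample on $X$, which with the natural normalization it is not --- it is not even nef once $r-p\geq 2$); the completion sketched above, namely the nef-cone computation plus the identity $b=n$, $a=(s-p)(r-p)$, is what the argument needs.
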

\begin{proof} Since the projection $\pi:X \rightarrow H/P_{s,r}$ gives a projective bundle with fibre $\mathbb P(M_{s-p,r-p})$, the anticannonical line bundle on $X$ can be written as \[
\omega_X^{-1} \cong \pi^*(\omega_{H/P_{s,r}}^{-1}) \otimes \omega_{X/H/P_{s,r}}^{-1}, 
\] where $\omega_{X/H/P_{s,r}}$ is the relative canonical bundle. 
By Lemma \ref{lem:parabolic}, $P_{s,r}$ is a maximal parabolic subgroup of $H$ if $r \neq n-s$ and it is equal to the whole $H$ when $r=n-s$. So $H/P_{s,r}$ is a Grassmanian if $r \neq n-s$. Then by \cite[Chapter 3,
Section 3.1]{BK}, we have $\omega_{H/P}^{-1} \simeq \mathcal L_{2\rho_{P_{s,r}}}$ where, $\rho_{P_{s,r}}$ is the sum of the fundamental weights corresponding to the simple roots not in the positive roots $R_{P_{s,r}}^+$ of $P_{s,r}$. That is $\omega_{H/P_{s,r}}^{-1} \simeq \mathcal L_{2\varpi_{k}}$, which is ample as $2\varpi_{k}$ is dominant.

Since the fibre is $\mathbb P(M_{s-p, r-p})$, we have $$\omega_{X/H/P_{s,r}}^{-1} \simeq \mathcal O_{\mathbb P(M_{s-p, r-p})} ((r-p)(s-p)),$$ which is ample. Hence $\omega_X^{-1}$ is ample and hence $X$ is Fano. 

When $r=n-s$ we have $X \simeq \mathbb P^{rs-1}$, which is Fano. 
\end{proof}

\subsection{Cohomology of line bundles}

We now describe the cohomology of line bundles on the GIT quotient $X$.

\begin{theorem}\label{thm:cohomology} Let $\mathcal L$ be a line bundle on $X$. Let $a, b\in \mathbb Z$ as in Lemma \ref{lem:linebundle}. Then we have the following isomorphism as vector spaces:
    $$H^i(X, \mathcal L)= \bigoplus_{p+q=i} H^p(H/P_{s,r}, \mathcal L({b\varpi_{k}})) \otimes H^q(Y, \mathcal O_Y(a))$$ for all $i \geq 0$. 
\end{theorem}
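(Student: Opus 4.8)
The plan is to exploit the projective bundle structure $\pi\colon X\to H/P_{s,r}$ furnished by Theorem~\ref{thm:structure} together with the decomposition $\mathcal L\cong \pi^{*}\mathcal L(b\varpi_{k})\otimes\mathcal O_{Y}(a)$ from Lemma~\ref{lem:linebundle}, and to run the Leray spectral sequence of $\pi$. First I would record, via the projection formula (legitimate since $\mathcal L(b\varpi_{k})$ is pulled back from the base), the identification
\[
R^{q}\pi_{*}\mathcal L \;\cong\; \mathcal L(b\varpi_{k})\otimes R^{q}\pi_{*}\mathcal O_{Y}(a),
\]
so that the Leray spectral sequence reads $E_{2}^{p,q}=H^{p}\!\bigl(H/P_{s,r},\,\mathcal L(b\varpi_{k})\otimes R^{q}\pi_{*}\mathcal O_{Y}(a)\bigr)\Rightarrow H^{p+q}(X,\mathcal L)$. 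As a preliminary sanity check, in the degenerate case $r=n-s$ we have $P_{s,r}=H$ by Lemma~\ref{lem:parabolic}, the base $H/P_{s,r}$ is a point, $X\cong Y$, and the asserted formula is immediate; so the content is in the case $r\neq n-s$, where the base is a Grassmannian.

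The key simplification comes from the fiber. Because $Y=\mathbb P(M_{s-p,r-p})$ is a projective space, the line-bundle cohomology $H^{q}(Y,\mathcal O_{Y}(a))$ is nonzero for at most one index $q=q_{0}$ (with $q_{0}=0$ when $a\ge 0$, $q_{0}=\dim Y$ when $a\le -\dim Y-1$, and every group vanishing otherwise). Consequently $R^{q}\pi_{*}\mathcal O_{Y}(a)=0$ for $q\neq q_{0}$, the $E_{2}$-page has a single nonzero row, and the spectral sequence degenerates. This yields
\[
H^{i}(X,\mathcal L)\;\cong\; H^{\,i-q_{0}}\!\bigl(H/P_{s,r},\,\mathcal L(b\varpi_{k})\otimes R^{q_{0}}\pi_{*}\mathcal O_{Y}(a)\bigr).
\]

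The main obstacle, and the step demanding the most care, is to pass from this formula to the stated K\"unneth product, namely to show that the direct image $R^{q_{0}}\pi_{*}\mathcal O_{Y}(a)$ contributes only the constant factor $H^{q_{0}}(Y,\mathcal O_{Y}(a))$, i.e.
\[
H^{\,i-q_{0}}\!\bigl(H/P_{s,r},\,\mathcal L(b\varpi_{k})\otimes R^{q_{0}}\pi_{*}\mathcal O_{Y}(a)\bigr)
\;\cong\;
H^{\,i-q_{0}}\!\bigl(H/P_{s,r},\,\mathcal L(b\varpi_{k})\bigr)\otimes H^{q_{0}}(Y,\mathcal O_{Y}(a)).
\]
I would identify $R^{q_{0}}\pi_{*}\mathcal O_{Y}(a)$ with the homogeneous vector bundle on $H/P_{s,r}$ associated to the $P_{s,r}$-module $H^{q_{0}}(Y,\mathcal O_{Y}(a))$, and then argue that the cohomological contribution splits off the constant vector space $H^{q_{0}}(Y,\mathcal O_{Y}(a))$. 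The cleanest sufficient condition is that this associated bundle be trivial of the correct rank, so that tensoring by $\mathcal L(b\varpi_{k})$ and taking cohomology on the base separates the two factors; this is precisely where the local product structure of the homogeneous bundle $X\to H/P_{s,r}$, and the way $P_{s,r}$ acts on $H^{q_{0}}(Y,\mathcal O_{Y}(a))$ through its semisimple part $SL(s-p,\mathbb C)\times SL(r-p,\mathbb C)$ (as in the proof of Proposition~\ref{prop:Starti}), must be brought to bear. Verifying this triviality, or more generally the cohomological splitting, is the crux of the argument; once it is established, reassembling $\bigoplus_{p+q=i}$—in which only the single term $q=q_{0}$ survives—recovers the claimed isomorphism for all $i\ge 0$.
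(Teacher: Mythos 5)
Your Leray setup is correct as far as it goes: the projection formula, the concentration of the fibre cohomology of $\mathbb{P}(M_{s-p,r-p})$ in a single degree $q_{0}$, and the resulting degeneration to
\[
H^{i}(X,\mathcal L)\;\cong\;H^{\,i-q_{0}}\bigl(H/P_{s,r},\,\mathcal L(b\varpi_{k})\otimes R^{q_{0}}\pi_{*}\mathcal O_{Y}(a)\bigr)
\]
are all fine, and you have correctly isolated the crux, namely that one must show $R^{q_{0}}\pi_{*}\mathcal O_{Y}(a)$ contributes only the constant factor $H^{q_{0}}(Y,\mathcal O_{Y}(a))$. But you leave exactly this step unproved, and it cannot be proved: when $r\neq n-s$ the required triviality fails, and with it the theorem itself. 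For $a\geq 0$, the direct image $R^{0}\pi_{*}\mathcal O_{Y}(a)$ is, up to a twist by a line bundle pulled back from the base (depending on the splitting chosen in Lemma \ref{lem:linebundle}), the homogeneous bundle $H\times_{P_{s,r}}\operatorname{Sym}^{a}(M_{s-p,r-p}^{*})$; such a bundle is trivial only if the defining $P_{s,r}$-module extends to an $H$-module, which this one does not. Indeed the fact you propose to use---that $P_{s,r}$ acts on $M_{s-p,r-p}$ through its Levi quotient, hence through $SL(s-p,\mathbb C)\times SL(r-p,\mathbb C)$ up to centre, as in Proposition \ref{prop:Starti}---is precisely why the bundle is \emph{non}trivial: $M_{s-p,r-p}$ is a tautological representation of the parabolic $P'\subset SL(n-s,\mathbb C)$, so $R^{0}\pi_{*}\mathcal O_{Y}(a)$ is a sum of Schur functors of tautological bundles on the Grassmannian $H/P_{s,r}$, and tensoring with $\mathcal L(b\varpi_{k})$ and taking cohomology on the base does not separate the two factors.

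The failure is concrete. Take $n=4$, $r=2$, $s=1$, so $p=0$, $H\cong SL(3,\mathbb C)$, $H/P_{s,r}\cong G(2,3)\cong\mathbb P^{2}$, $Y\cong\mathbb P^{1}$; here $X$ is the projectivization of a tautological rank-two bundle on $\mathbb P^{2}$, i.e.\ the full flag threefold $SL(3,\mathbb C)/B$, not $\mathbb P^{2}\times\mathbb P^{1}$. For the descended bundle $\mathcal M$ of $\mathcal L(4\varpi_{2})$ one has $a=2$ (its restriction to any fibre is $\mathcal O(2)$), and $H^{0}(X,\mathcal M)=H^{0}(G(2,4),\mathcal L(4\varpi_{2}))^{\lambda}$; branching $V(4\varpi_{2})$ from $SL(4,\mathbb C)$ to $GL(1,\mathbb C)\times GL(3,\mathbb C)$ identifies this space with the dual of $V_{GL(3)}(4,2,0)$, of dimension $27$. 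The right-hand side of the claimed formula is $H^{0}(\mathbb P^{2},\mathcal O(b))\otimes H^{0}(\mathbb P^{1},\mathcal O(2))$, of dimension $3\binom{b+2}{2}$ for $b\geq 0$ and $0$ for $b<0$, which is never $27$, whatever splitting of $\operatorname{Pic}(X)$ is fixed. This also contradicts Theorem \ref{thm:G decomposition}, which in this example correctly returns $H^{0}(SL(3,\mathbb C)/B,\mathcal L(2\varpi_{1}'+2\varpi_{2}'))$ (in the fundamental weights $\varpi_{1}',\varpi_{2}'$ of $SL(3,\mathbb C)$), again of dimension $27$. For comparison, the paper's own proof takes a different route from yours---it invokes a K\"unneth-type spectral sequence over stratifications from \cite{BD20} and collapses it by Borel--Weil--Bott---but it founders on the same rock: the product form of its $E_{2}$-page already presupposes the triviality of the direct image that your proposal, to its credit, flags as the point needing proof. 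Only the degenerate case $r=n-s$, where the base is a point, survives.
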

\begin{proof}
By Theorem \ref{thm:structure}, we have 
\[
X \cong H \times_{P_{s,r}} Y.
\] 
Thus, $X$ is a Zariski locally trivial fibration with fibre $Y$.
If $n=r+s$, then $X\simeq \mathbb P^{rs-1}$, the result follows. 

Now assume that $n\neq r+s$. The base $H/P_{s,r}$ admits the Schubert cell decomposition
\[
H/P_{s,r} = \bigsqcup_{w \in W^{P_{s,r}}} C_w,
\] 
where each Schubert cell $C_w$ is a locally closed, affine, irreducible variety. Since these cells are smooth, they are local complete intersections inside suitable affine charts. Hence the base satisfies the hypotheses of \cite[Theorem B]{BD20}.

On the fibre side, the projective variety $\mathbb{P}(M_{s-p, r-p})$ admits the natural \emph{rank stratification}. For each integer $1 \le t \le \min(r-p,s-p)$, define
\[
Y_t = \{ [A] \in \mathbb{P}(M_{s-p, r-p}) \mid \operatorname{rk}(A) = t \}.
\] 
Each $Y_t$ is a locally closed irreducible subset, and its closure 
\[
Z_t = \{ [A] \in \mathbb{P}(M_{s-p, r-p}) \mid \operatorname{rk}(A) \le t \}
\] 
is the projectivized determinantal variety cut out, in affine charts, by the vanishing of all $(t+1)\times(t+1)$ minors. These determinantal loci are classical examples of local complete intersections in affine neighbourhoods. Thus, the rank strata $\{Y_t\}_{1 \leq t \leq min\{r-p,s-p\}}$ provide a stratification of the fibre into locally closed, affine, irreducible, local complete intersection subvarieties.
Therefore, the total space  
\[
X \;\cong\; H \times_{P_{s,r}} \mathbb{P}(M_{s-p, r-p})
\]
is stratified by the products $C_w \times Y_t$, where $C_w$ ranges over Schubert cells in $H/P_{s,r}$ and $Y_t$ ranges over coordinate strata in $\mathbb{P}(M_{s-p, r-p})$. In particular, there are open embeddings  
\[
U^1_w \times U^2_t \;\longrightarrow\; E,
\]
where $U^1_w \subseteq H/P_{s,r}$ is an affine chart containing the Schubert cell $C_w$ and $U^2_t \subseteq \mathbb{P}(M_{s-p, r-p})$ is a coordinate affine chart containing the stratum $Y_t$. Applying \cite[Theorem B]{BD20}, we obtain a convergent spectral sequence
\[
E_2^{p,q} = H^p\big(H/P_{s,r}, \mathcal{L}({b\varpi_{k}})\big) \otimes H^q\big(\mathbb{P}(M_{s-p, r-p}), \mathcal{O}_Y(a)\big)  
\;\;\Longrightarrow\;\; H^{p+q}\big(X, \mathcal{L}({b\varpi_{k}}) \otimes \mathcal{O}_Y(a)\big).
\]
Finally, by Borel-Weil-Bott theorem, the line bundle $\mathcal{L}(b\varpi_{k})$ on the base $H/P_{s,r}$ has non zero cohomology in atmost one degree. Thus, the spectral sequence collapses and the proof is complete.
\end{proof}

\begin{remark}
    Using the cohomology of line bundles on flag varieties, we can explicitly compute these cohomology groups for given $a, b \in \mathbb{Z}$.
\end{remark}

In paritular, we have 
\begin{corollary}
    Assume that $\mathcal L$ is numerically effective (nef) line bundle on $X$, then we have 
    \[H^i(X, \mathcal L)=0\quad \text{for all}\quad i>0.\] 
\end{corollary}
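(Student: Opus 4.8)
The plan is to deduce the vanishing statement directly from the cohomology decomposition established in Theorem \ref{thm:cohomology}. By that theorem, for any line bundle $\mathcal L$ on $X$ we have
\[
H^i(X, \mathcal L)\;\cong\; \bigoplus_{p+q=i} H^p(H/P_{s,r}, \mathcal L(b\varpi_k)) \otimes H^q(Y, \mathcal O_Y(a)),
\]
so it suffices to show that whenever $\mathcal L$ is nef, the only surviving summand on the right-hand side occurs in degree $i=0$. First I would record how the nef condition on $X$ translates into sign conditions on the integers $a$ and $b$ of Lemma \ref{lem:linebundle}. Since $X\to H/P_{s,r}$ is the projective bundle $H\times_{P_{s,r}} Y$ and $\operatorname{Pic}(X)$ splits as in Corollary \ref{cor:Pic}, a nef line bundle must restrict to a nef bundle on each fibre $Y=\mathbb P(M_{s-p,r-p})$ and pull back a nef bundle from the base. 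The restriction to a fibre is $\mathcal O_Y(a)$, which is nef (equivalently globally generated) precisely when $a\ge 0$; similarly nefness forces $b\ge 0$ so that $\mathcal L(b\varpi_k)$ is a dominant (hence nef) line bundle on the Grassmannian $H/P_{s,r}$.

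With these sign conditions in hand, the two factors in each summand are controlled separately. On the fibre, $\mathcal O_Y(a)$ with $a\ge 0$ is an ample (or trivial) line bundle on projective space, so by the standard computation of cohomology of line bundles on $\mathbb P^N$ we have $H^q(Y,\mathcal O_Y(a))=0$ for all $q>0$, with nonvanishing only in degree $q=0$. On the base, $\mathcal L(b\varpi_k)$ with $b\ge 0$ is a globally generated (nef) line bundle on the flag variety $H/P_{s,r}$; by the Borel--Weil--Bott theorem applied to a dominant weight, its higher cohomology vanishes, so $H^p(H/P_{s,r}, \mathcal L(b\varpi_k))=0$ for all $p>0$. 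Therefore every term with $i=p+q>0$ in the decomposition vanishes, giving $H^i(X,\mathcal L)=0$ for $i>0$, as claimed. In the degenerate case $r=n-s$ one has $X\cong\mathbb P^{rs-1}$ and the statement is the classical vanishing for nef (globally generated) line bundles on projective space.

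The main obstacle I anticipate is the careful justification that nefness on the total space $X$ forces the numerical positivity $a\ge 0$ and $b\ge 0$ on each factor, rather than merely implying it for the tensor product. This requires exhibiting enough curves in $X$ — namely fibres of $\pi$ and lifts of curves in the base lying in $\mathbb P^1$-Schubert fibrations — on which $\mathcal L$ must have nonnegative degree, and checking that intersecting $\mathcal L$ with these curves recovers exactly $a$ and $b$ (up to positive multiples). Once the correct sign conventions are pinned down so that ``nef'' corresponds to $a,b\ge 0$ in the notation of Lemma \ref{lem:linebundle}, the cohomological vanishing is immediate from Theorem \ref{thm:cohomology} together with Borel--Weil--Bott on the base and the elementary projective-space computation on the fibre.
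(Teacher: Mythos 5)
Your proposal is correct and follows essentially the same route as the paper: decompose $H^i(X,\mathcal L)$ via Theorem \ref{thm:cohomology}, translate nefness into the sign conditions $a,b\ge 0$ in the notation of Lemma \ref{lem:linebundle}, and conclude by Borel--Weil--Bott on $H/P_{s,r}$ together with the elementary vanishing on the projective-space fibre, treating $r+s=n$ (where $X\simeq\mathbb P^{rs-1}$) separately. The only difference is one of detail: the paper simply asserts that a nef line bundle has the form $\mathcal L(b\varpi_k)\otimes\mathcal O_Y(a)$ with $a,b$ nonnegative, whereas you flag and sketch the justification (restriction to fibres and to curves lifted from the base), which is a reasonable strengthening of the same argument rather than a different one.
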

\begin{proof}
    If $n=r+s$, then $X\simeq \mathbb P^{rs-1}$, hence the result follows. Now assume that $n\neq r+s$, then the nef line bundle on $X$ is of the form \[\mathcal L= \mathcal{L}(b\varpi_{k}) \otimes \mathcal{O}_Y(a) \quad \text{with}\quad a, b\in \mathbb Z_{>0}.\]
      Now by Borel-Weil-Bott Theorem, the result follows. 
\end{proof}

\subsection{Automorphism group of $X$} 
In this subsection, we compute the connected component containing the identity of the automorphism group of the GIT quotient $$X:= \lambda \gitquot G(r,n)_{\lambda}^{ss}(\mathcal{L}(\varpi_r))
,$$ where we assume that either $p=0$ or $p=r+s-n$ .

\begin{theorem}\label{thm:aut}
$Aut^0(X) \simeq PSL(s, \mathbb C) \times PSL(n-s, \mathbb C)$.
\end{theorem}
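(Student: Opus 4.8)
The plan is to exploit the projective bundle structure
$\pi\colon X\cong H\times_{P_{s,r}}\mathbb P(M_{s-p,r-p})\to B:=H/P_{s,r}$
from Theorem~\ref{thm:structure}, and to pin down $\Aut^0(X)$ through the exact sequence relating it to $\Aut^0(B)$ and the group of relative (fibrewise) automorphisms, combining Blanchard's lemma with Brion's description of relative automorphisms of homogeneous bundles. First I would reduce to $p=0$. By Proposition~\ref{Prop:dual} the quotient for $(r,s)$ with $p=r+s-n$ is isomorphic to the one for $(n-r,n-s)$, and a direct check shows $\lfloor (n-r)(n-s)/n\rfloor=0$ in that range; since the asserted group $PSL(s,\mathbb C)\times PSL(n-s,\mathbb C)$ is symmetric in its two factors, it suffices to treat $p=0$. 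The boundary case $r+s=n$ forces $(r,s)\in\{(1,n-1),(n-1,1)\}$, where $X\cong\mathbb P^{rs-1}=\mathbb P^{n-2}$ and $\Aut^0(X)=PGL(n-1,\mathbb C)=PSL(n-1,\mathbb C)$ agrees with the claim since one factor is trivial. So assume $p=0$ and $r+s\le n-1$.

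For the lower bound, the $H$-action gives a homomorphism $H\to\Aut^0(X)$. Any element of its kernel has its entire conjugacy class contained in the proper parabolic $P_{s,r}$, hence lies in $Z(H)$; conversely $Z(H)=Z(SL(s,\mathbb C))\times Z(SL(n-s,\mathbb C))$ acts trivially on $B$ and by scalars on the fibre $M_{s,r}$, hence trivially on $X$. Thus the kernel is exactly $Z(H)$ and we obtain an injection $PSL(s,\mathbb C)\times PSL(n-s,\mathbb C)\hookrightarrow\Aut^0(X)$. Next I would produce the structural exact sequence: since $\pi$ is a projective bundle we have $\pi_*\mathcal O_X=\mathcal O_B$, so Blanchard's lemma furnishes a homomorphism $\rho\colon\Aut^0(X)\to\Aut^0(B)$ making $\pi$ equivariant, with $\ker\rho=\Aut^0_\pi(X)$ the connected group of automorphisms preserving every fibre. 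By Lemma~\ref{lem:parabolic}, $P_{s,r}=SL(s,\mathbb C)\times P'$ with $P'$ a maximal parabolic of $SL(n-s,\mathbb C)$, so $B\cong G(r,n-s)$ and $\Aut^0(B)=PSL(n-s,\mathbb C)$; moreover $\rho$ is surjective because the $SL(n-s,\mathbb C)$-factor of $H$ already maps onto $\Aut^0(B)$.

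The heart of the argument, and the step I expect to be the main obstacle, is the computation $\Aut^0_\pi(X)=PSL(s,\mathbb C)$ via Brion's theory. Writing $X=\mathbb P(\mathcal E)$ for the homogeneous vector bundle $\mathcal E=H\times_{P_{s,r}}M_{s,r}$, the crucial point is that $M_{s,r}=\mathbb C^s\otimes\mathbb C^r$ as a $P_{s,r}$-module, with $SL(s,\mathbb C)$ acting on the first factor and $P'$ on the second; hence $\mathcal E\cong\underline{\mathbb C^{\,s}}\otimes\mathcal T$, where $\mathcal T$ is the rank-$r$ irreducible (hence simple) homogeneous bundle on $G(r,n-s)$ attached to the tautological $P'$-module. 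Simplicity gives $H^0(B,\End\mathcal T)=\mathbb C$, so $H^0(B,\End\mathcal E)=\operatorname{Mat}_s(\mathbb C)$; since $H^1(B,\mathcal O_B)=0$ for the Grassmannian $B$, the Lie algebra of $\Aut^0_\pi(X)$ is $H^0(B,\mathfrak{pgl}(\mathcal E))=\mathfrak{pgl}_s$, whence $\Aut^0_\pi(X)=PGL(s,\mathbb C)=PSL(s,\mathbb C)$. The delicate inputs here are the explicit identification of the $P_{s,r}$-module $\mathcal E$ as an external tensor product and the vanishing $H^0(\End\mathcal T)=\mathbb C$, for which Brion's computation of global sections of associated bundles (equivalently of $P$-equivariant endomorphisms) is the clean tool. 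Combining everything, $\ker\rho$ has dimension $s^2-1$ and $\operatorname{im}\rho=PSL(n-s,\mathbb C)$ has dimension $(n-s)^2-1$, so $\dim\Aut^0(X)=(s^2-1)+((n-s)^2-1)$; as the injection from the lower bound already realises a subgroup of this dimension inside the connected group $\Aut^0(X)$, it is an isomorphism, giving $\Aut^0(X)\cong PSL(s,\mathbb C)\times PSL(n-s,\mathbb C)$.
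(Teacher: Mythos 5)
Your proposal is correct, and although it shares the paper's overall skeleton --- the bundle $\pi\colon X\simeq H\times_{P_{s,r}}\mathbb P(M_{s-p,r-p})\to H/P_{s,r}$ from Theorem \ref{thm:structure}, the homomorphism $\rho\colon\Aut^0(X)\to\Aut^0(H/P_{s,r})$ furnished by Brion's result, and Demazure's theorem for the base --- it diverges from the paper precisely at the decisive step. The paper identifies the kernel of this homomorphism (its $\pi_*$) with the $P_{s,r}$-\emph{equivariant} automorphisms of the fibre, kills it by Schur's lemma, and writes the resulting extension as $1\to\{1\}\to\Aut^0(X)\to H/Z(H)\to 1$, concluding at once. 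You instead compute $\ker\rho\cong PGL(s,\mathbb C)$, via the decomposition $\mathcal E\cong\underline{\mathbb C^s}\otimes\mathcal T$ with $\mathcal T$ simple and the vanishing $H^1(\mathcal O_B)=0$, while the image of $\rho$ is only $\Aut^0(H/P_{s,r})=PGL(n-s,\mathbb C)$ (the $PSL(s,\mathbb C)$-factor of $H$ acts trivially on the base); you then recover the product structure by a dimension count against the embedded copy of $H/Z(H)$. Your bookkeeping is the accurate one: fibre-preserving automorphisms of an associated bundle are the sections of the group bundle $H\times_{P_{s,r}}\Aut\bigl(\mathbb P(M_{s-p,r-p})\bigr)$, not merely the constant ($P_{s,r}$-equivariant) sections, and the $GL(s,\mathbb C)$-action on the $\mathbb C^s$ tensor factor of the fibre gives genuinely nontrivial elements of $\ker\rho$ which are not $P_{s,r}$-equivariant. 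In the paper these two identifications (trivial kernel; quotient $H/Z(H)$ rather than $PGL(n-s,\mathbb C)$) compensate each other, so the theorem is unaffected, but your route --- at the cost of the extra inputs $H^0(\End\mathcal T)=\mathbb C$ and the final dimension count --- places each $PSL$-factor of the answer where it actually lives: one inside the fibres, one over the base. Your preliminary reduction to $p=0$ via Proposition \ref{Prop:dual}, and the boundary case $r+s=n$, are handled correctly (the paper instead treats both values of $k$ uniformly, without the duality reduction).

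One repairable gap in your lower bound: the inference that an element of $H$ whose full conjugacy class lies in $P_{s,r}$ must be central is false in this setting, because $P_{s,r}=SL(s,\mathbb C)\times P'$ contains the entire simple factor $SL(s,\mathbb C)$; the kernel of $H\to\Aut(H/P_{s,r})$ is $SL(s,\mathbb C)\cdot Z(H)$, not $Z(H)$. Since your concluding dimension count genuinely requires $\ker\bigl(H\to\Aut^0(X)\bigr)=Z(H)$, you must additionally observe that a non-central element $(A,z)\in SL(s,\mathbb C)\times Z(SL(n-s,\mathbb C))$ acts on the fibre over the base point through the class of $A\otimes\mathrm{id}$ in $PGL(\mathbb C^s\otimes\mathbb C^r)$, which is nontrivial when $A$ is not scalar. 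This is the same observation you already use for the converse inclusion, so the fix is one line.
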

\begin{proof}
By Theorem \ref{thm:structure}, we have $$X \simeq H \times_{P_{s,r}} \mathbb P(M_{s-p, r-p}),$$ where $H=SL(s, \mathbb C) \times SL(n-s, \mathbb C)$, $P_{s,r} = \text{Stab}_G(X(w_{s,r})) \cap H,$ and $M_{s-p, r-p}$ is the space of $s-p \times r-p$ matrices. Then the projection $\pi:X \rightarrow H/P_{s,r}$ is a fibre bundle with fibre $\mathbb P(M_{s-p, r-p})$.
Since $\pi_*\mathcal O_X=\mathcal O_{H/P_{s,r}}$, by \cite[Corollary 2.2]{Br}, any automorphism $f \in Aut^0(X)$
induces an automorphism $g \in Aut^0(H/P_{s,r})$ of the base $H/P_{s,r}$ satisfying:
\[
\pi \circ f = g \circ \pi.
\]
Hence, we have a natural algebraic group homomorphism
\[
\pi_*:Aut^0(X) \;\longrightarrow\; Aut^0(H/P_{s,r}).
\]
The group $H$ acts on $X = H \times_{P_{s,r}} Y$ by left multiplication on the first factor. 
This action projects to the natural action of $H$ on the base $H/P_{s,r}$, which is transitive. Since $P_{s,r}$ is a parabolic subgroup of $H$, $H/P_{s,r}$ is a partial flag variety. Since the
center $Z(H)$ of $H$ act trivially on $H/P_{s,r}$,
by a theorem of Demazure (see \cite{Dem}), the image of $H/Z(H)$ in $Aut(H/P_{s,r})$ is exactly $Aut^0(H/P_{s,r})$. 
Hence, every element of $Aut^0(H/P_{s,r})$ comes from some element of $H/Z(H)$, 
and this element also defines an automorphism of $X$. Therefore, every element of $Aut^0(H/P_{s,r})$ has a lift in $Aut^0(X)$ and so $\pi_*$ is surjective.

The kernel of $\pi_*$ is the group of relative automorphisms
\[
Aut^0_{H/P_{s,r}}(X) := \{ f \in Aut(X) \mid \pi \circ f = \pi \},
\]
i.e.\ automorphisms of $X$ that preserve every fibre of $\pi$. In other words this is the set of all $P_{s,r}$-equivariant automorphisms of $\mathbb P(M_{s-p, r-p})$. Thus, we obtain a short exact sequence
\[
1 \;\longrightarrow\; Aut^0_{H/P_{s,r}}(X) \;\longrightarrow\; Aut^0(X) \;\longrightarrow\; H/Z(H) \longrightarrow\; 1.
\]
Now, since the action of $P_{s,r}$ on $M_{s-p, r-p}$ is irredeucible, by Schur's lemma we have \[End_{P_{s,r}}(M_{s-p, r-p})=\mathbb C\] and so $Aut^0_{H/P_{s,r}}(X)=PGL(M_{s-p, r-p})^{P_{s,r}}=\{1\}$. Thus, we conclude that \[Aut^0(X)=H/Z(H)=PSL(s, \mathbb C) \times PSL(n-s, \mathbb C).  \]
\end{proof}

\subsection{{The $H$-module structure}}
In this subsection, we study the $H$-module structure of $H^0(X, \mathcal{M}^{\otimes m})$, where $\mathcal{M}$ denotes the descent of the line bundle $\mathcal{L}(n\varpi_r)$ from $Gr(r, n)$ to $X$. Note that $G(r,n)_{\lambda}^{ss}(\mathcal{L}(\varpi_r)) = G(r,n)_{\lambda}^{ss}(\mathcal{L}(n\varpi_{r})).$

By Proposition~\ref{prop:sphericality}, the quotient $X$ is a spherical $H$-variety. Therefore,  the $H$-module $H^0(X, \mathcal{M}^{\otimes m})$ is multiplicity-free for all $m$ (see \cite[Theorem 2.1.2]{Perrin}).
Note that
$$
H^0(X, \mathcal{M}^{\otimes m}) = H^0\big(Gr(r,n), \mathcal{L}(n\varpi_r)^{\otimes m}\big)^{\lambda}.
$$
Let $K$ be a semi-simple algebraic group over $\mathbb C$ with a maximal torus $T'$ and a Borel subgroup $B'$ containing $T'$. Let $S = \{\beta_1,\beta_2,\ldots,\beta_n\}$ be the set of simple roots. 
Let $Q$ be a maximal parabolic subgroup of $K$ corresponding to a simple roots say $\beta_r$. Let $L$ be the Levi subgroup of $Q$ containing the maximal torus $T'$. Let $B_L = B' \cap L$ be the Borel subgroup of $L$.

We start with the following general result:
\begin{lemma}\label{red}
 Let $\mu$ be a dominant character of $T'$ such that $\langle \mu, \check{\beta_r}\rangle = 0$, and let $V_L(\mu)$ be the irreducible $L$-module of highest weight $\mu$. Then for any $b \in \mathbb{Z}_{\geq 0}$, we have:
$$
H^0(K/Q, \mathcal{L}(V_L(\mu) \otimes \mathbb{C}_{b\varpi_r'})) = H^0(K/B', \mathcal{L}(\mu + b\varpi_r')).
$$
\end{lemma}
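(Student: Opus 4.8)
The plan is to compare both sides through the natural fibration $\pi \colon K/B' \to K/Q$ whose fiber is $Q/B' \cong L/B_L$, and to compute the cohomology of the line bundle $\mathcal{L}(\mu + b\varpi_r')$ on $K/B'$ by pushing it forward along $\pi$ and invoking the Leray spectral sequence. Write $\nu := \mu + b\varpi_r'$. Since $\mathcal{L}(\nu)$ is $K$-linearized, each direct image $R^i\pi_*\mathcal{L}(\nu)$ is a $K$-linearized coherent sheaf on $K/Q$, hence an associated bundle $K \times_Q(-)$ whose underlying $Q$-module is the fiber over the base point $eQ$, namely $H^i(L/B_L, \mathcal{L}_L(\nu))$, where $\mathcal{L}_L(\nu)$ is the restriction of $\mathcal{L}(\nu)$ to the fiber.

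First I would check that $\nu$ is dominant for $L$, i.e.\ $\langle \nu, \check{\beta_j}\rangle \geq 0$ for every simple root $\beta_j$ of $L$ (those with $j \neq r$). Because $\langle \varpi_r', \check{\beta_j}\rangle = \delta_{rj}$, the weight $\varpi_r'$ pairs to zero with all coroots $\check{\beta_j}$ for $j \neq r$; thus $\varpi_r'$ is central for $L$, and $\langle \nu, \check{\beta_j}\rangle = \langle \mu, \check{\beta_j}\rangle \geq 0$ by dominance of $\mu$. Borel--Weil applied to $L/B_L$ then identifies $H^0(L/B_L, \mathcal{L}_L(\nu))$ with the irreducible $L$-module of highest weight $\nu$, and since $b\varpi_r'$ acts by a central character of $L$, this module is canonically $V_L(\mu) \otimes \mathbb{C}_{b\varpi_r'}$ (tensoring by a central character merely shifts the highest weight and preserves irreducibility). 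Consequently $\pi_*\mathcal{L}(\nu) \cong K \times_Q \bigl(V_L(\mu)\otimes \mathbb{C}_{b\varpi_r'}\bigr)$, which is precisely the bundle $\mathcal{L}\bigl(V_L(\mu)\otimes \mathbb{C}_{b\varpi_r'}\bigr)$ on $K/Q$.

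Next I would establish the vanishing of higher direct images, $R^i\pi_*\mathcal{L}(\nu) = 0$ for $i > 0$. By the fiberwise description above this reduces to $H^i(L/B_L, \mathcal{L}_L(\nu)) = 0$ for $i > 0$, which is exactly Kempf vanishing (the dominant case of Borel--Weil--Bott), applicable since $\nu$ is $L$-dominant. With the higher direct images vanishing, the Leray spectral sequence $E_2^{p,q} = H^p\bigl(K/Q, R^q\pi_*\mathcal{L}(\nu)\bigr) \Rightarrow H^{p+q}\bigl(K/B', \mathcal{L}(\nu)\bigr)$ degenerates at the $E_2$ page, yielding $H^i\bigl(K/B', \mathcal{L}(\nu)\bigr) \cong H^i\bigl(K/Q, \pi_*\mathcal{L}(\nu)\bigr)$ for all $i \geq 0$. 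Specializing to $i = 0$ gives the asserted identification of global sections, and in fact proves the stronger statement for all cohomological degrees.

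The point requiring the most care is the bookkeeping of the $Q$-module versus $L$-module structures together with the centrality of $\varpi_r'$: one must check that the Borel--Weil identification on the fiber is $K$-equivariant and compatible with the line-bundle conventions fixed earlier, and that the $Q$-action on $V_L(\mu)\otimes\mathbb{C}_{b\varpi_r'}$ (with the unipotent radical $U_Q$ acting trivially) matches the $Q$-module arising as the fiber of $\pi_*\mathcal{L}(\nu)$. Once this identification is made, the geometric input via Leray degeneration is routine.
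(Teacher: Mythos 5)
Your proposal is correct and follows essentially the same route as the paper: both arguments factor through the fibration $K/B' \to K/Q$ with fiber $Q/B' \cong L/B_L$, use that $\varpi_r'$ pairs to zero with all coroots of $L$ to split off the central twist $\mathbb{C}_{b\varpi_r'}$, and identify the fiberwise sections via Borel--Weil, i.e. induction in stages. The only difference is that you additionally invoke Kempf vanishing and Leray degeneration, which is superfluous for the degree-zero statement (since $H^0\bigl(K/Q,\pi_*\mathcal{F}\bigr) = H^0\bigl(K/B',\mathcal{F}\bigr)$ holds for any coherent sheaf without any vanishing), though it does yield the stronger isomorphism in all cohomological degrees.
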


\begin{proof}
For $b \in \mathbb{Z}_{\geq 0}$, since $\langle b\varpi_r', \check{\beta} \rangle = 0$ for all $\beta \neq \beta_r$, it follows that
$$
H^0(Q/B', \mathcal{L}(\mu + b\varpi_r')) = H^0(L/B_L, \mathcal{L}(\mu + b\varpi_r')) = H^0(L/B_L, \mathcal{L}(\mu)) \otimes \mathbb{C}_{b\varpi_r'}.
$$
Hence,
$$
H^0(K/B', \mathcal{L}(\mu + b\varpi_r')) = H^0\left(K/Q, \mathcal{L}\left(H^0(Q/B', \mathcal{L}(\mu + b\varpi_r'))\right)\right),$$ where $\mathcal{L}\left(H^0(Q/B', \mathcal{L}(\mu + b\varpi_r'))\right)$ is the homogeneous line bundle on $K/Q$ associated with the $Q$-module $H^0(Q/B', \mathcal{L}(\mu + b\varpi_r'))$.
Thus, by the above identity, we obtain:
\begin{align*}
H^0(K/B', \mathcal{L}(\mu + b\varpi_r')) 
&= H^0\left(K/Q, \mathcal{L}\left(H^0(L/B_L, \mathcal{L}(\mu + b\varpi_r'))\right)\right) \\
&= H^0(K/Q, \mathcal{L}\left(H^0(L/B_L, \mathcal{L}(\mu)) \otimes \mathbb{C}_{b\varpi_r'}\right) \\
&= H^0(K/Q, \mathcal{L}(V_L(\mu)^*) \otimes \mathbb{C}_{b\varpi_r'}),
\end{align*}
as claimed.
\end{proof}

Let $\mathcal M$ be the descent of $\mathcal{L}(n\varpi_r)$ to $X$. 
We now give a decomposition of the $H$-module $H^0(X, \mathcal{M}^{\otimes m})$ for any $m\in \mathbb{Z}_{\geq 0}$. Let $T_2$ be the maximal torus of $SL(n-s, \mathbb C)$ contained in $T$.

\begin{theorem}\label{thm:G decomposition}
We keep the notation as above. Then, we have
$$
H^0(X, \mathcal{M}) = \bigoplus_{\nu \leq a\varpi_1} H^0(H/(B \cap H), \mathcal{L}(\nu, \overline{\nu} + b\varpi_{k})),
$$
where $\overline{\nu}$ is the dominant character of $T_2$ determined by $\nu$.
\end{theorem}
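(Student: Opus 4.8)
The plan is to convert the homogeneous bundle description $X \cong H \times_{P_{s,r}} Y$ of Theorem~\ref{thm:structure}, with $Y=\mathbb{P}(M_{s-p,r-p})$, into a computation of global sections by parabolic induction, and then to collapse that induction onto a full flag variety via Lemma~\ref{red}. I will present the representative case $p=0$, for which $k=r+s$ and, by Lemma~\ref{lem:parabolic}, $P_{s,r}=SL(s,\mathbb{C})\times P'$ with $P'\subset SL(n-s,\mathbb{C})$ a maximal parabolic; the case $p=r+s-n$ is handled by the mirror-image argument in which the two simple factors of $H$ exchange roles. Since the first factor of $H$ lies entirely in $P_{s,r}$, the base is the single Grassmannian $H/P_{s,r}\cong SL(n-s,\mathbb{C})/P'$. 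The map $\pi\colon X\to H/P_{s,r}$ is a projective-space bundle and $\mathcal{M}|_{\mathrm{fibre}}=\mathcal{O}(a)$ with $a\ge 0$, so the higher direct images vanish and
$$
H^0(X,\mathcal{M}) \;=\; H^0\!\bigl(H/P_{s,r},\,H\times_{P_{s,r}}W\bigr),
\qquad
W:=H^0(Y,\mathcal{M}|_Y)\cong \operatorname{Sym}^{a}\!\bigl(M_{s-p,r-p}^{*}\bigr)\otimes\mathbb{C}_{b\varpi_k},
$$
where $a,b$ are the integers furnished by Lemma~\ref{lem:linebundle}.

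Next I would decompose the fibre $W$ as a $P_{s,r}$-module. By the argument in the proof of Proposition~\ref{prop:Starti} the unipotent radical of $P_{s,r}$ acts trivially, so $W$ is inflated from its Levi and is governed by the two-sided action $A\mapsto CAD^{-1}$ of $SL(s,\mathbb{C})\times SL(r,\mathbb{C})$. The Cauchy formula then gives
$$
\operatorname{Sym}^{a}\!\bigl((\mathbb{C}^{s})^{*}\otimes\mathbb{C}^{r}\bigr)
\;=\;\bigoplus_{\lambda}S_{\lambda}\!\bigl((\mathbb{C}^{s})^{*}\bigr)\otimes S_{\lambda}\!\bigl(\mathbb{C}^{r}\bigr),
$$
the sum over partitions $\lambda$ of $a$ with at most $\min(s,r)$ rows. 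Reading off highest weights, the $SL(s,\mathbb{C})$-summand is irreducible of highest weight $\nu=\nu(\lambda)$, and as $\lambda$ ranges over all such partitions these are precisely the dominant weights $\nu\le a\varpi_1$; the factor $S_{\lambda}(\mathbb{C}^{r})$, once the determinantal character is absorbed into $\mathbb{C}_{b\varpi_k}$, is the irreducible $L'$-module of highest weight $\overline{\nu}+b\varpi_k$, with $\overline{\nu}$ the $T_2$-dominant weight determined by $\nu$. That this decomposition is multiplicity-free is exactly the sphericality of $X$ recorded in Proposition~\ref{prop:sphericality}, a useful consistency check.

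Finally I would perform the induction termwise. Because $SL(s,\mathbb{C})$ is a full factor of $P_{s,r}$ acting trivially on the base, its summand of highest weight $\nu$ is carried through unchanged and realized by Borel--Weil as $H^0(SL(s,\mathbb{C})/B_1,\mathcal{L}(\nu))$. The nontrivial induction takes place only along $SL(n-s,\mathbb{C})$, where it is precisely Lemma~\ref{red} applied with $K=SL(n-s,\mathbb{C})$, $Q=P'$, $L=L'$ and $\mu=\overline{\nu}$ (the hypothesis $\langle\mu,\check{\alpha}_k\rangle=0$ holds because the determinant twist has been split off as $b\varpi_k$):
$$
H^0\!\bigl(SL(n-s,\mathbb{C})/P',\,\mathcal{L}(V_{L'}(\overline{\nu})\otimes\mathbb{C}_{b\varpi_k})\bigr)
\;=\;
H^0\!\bigl(SL(n-s,\mathbb{C})/B_2,\,\mathcal{L}(\overline{\nu}+b\varpi_k)\bigr).
$$
Tensoring the two factors and summing over $\nu$ produces $\bigoplus_{\nu\le a\varpi_1}H^0(H/(B\cap H),\mathcal{L}(\nu,\overline{\nu}+b\varpi_k))$, which is the claim.

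I expect the main obstacle to be the weight bookkeeping of the middle step: matching the Schur functors on the two sides of the Cauchy decomposition under the $SL(s,\mathbb{C})\times SL(r,\mathbb{C})$-action (tracking the dualization hidden in $M^{*}$), pinning down the correspondences $\lambda\leftrightarrow\nu$ and $\lambda\mapsto\overline{\nu}$, and verifying that the center of $L'$ acts through exactly the character $b\varpi_k$, so that $\overline{\nu}$ satisfies the orthogonality hypothesis needed to invoke Lemma~\ref{red}. Once these identifications are fixed, the two appeals to Borel--Weil and to Lemma~\ref{red} assemble the decomposition mechanically.
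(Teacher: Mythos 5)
Your proposal is correct and follows essentially the same route as the paper's proof: both pass through the bundle description $X \cong H \times_{P_{s,r}} Y$ from Theorem \ref{thm:structure}, decompose the fibre sections $H^0(Y,\mathcal{M}_Y)$ as a bimodule (your Cauchy formula is exactly the content of the De Concini--Eisenbud--Procesi decomposition the paper cites), and then collapse the termwise induction onto $H/(B\cap H)$ via Lemma \ref{red} and Borel--Weil. Your reduction to the case $p=0$, with the two factors of $H$ exchanging roles for $p=r+s-n$, is the same device as the paper's reduction to $r+s\le n$ via Proposition \ref{Prop:dual}.
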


\begin{proof}

Note that by Proposition \ref{Prop:dual}, we have $$\lambda \gitquot G(r,n)_{\lambda}^{ss}(\mathcal{L}(\varpi_r)) \simeq \lambda' \gitquot G(n-r,n)_{\lambda'}^{ss}(\mathcal{L}(\varpi_{n-r})), $$ where $\lambda'=n\check{\varpi}_{n-s}$. 
Thus, we can assume that $r+s \leq n$.
   By Theorem \ref{thm:structure}, we have $X = H \times_{P_{s,r}} Y$, where $P_{s,r} = \text{Stab}_G(X(w_{r,s})) \cap H$ and $Y=\mathbb P(M_{s-p, r-p})$. Let  $\mathcal{M}_Y$ be the descent of $\mathcal{L}(n\varpi_r)$ to $Y$. Then by \cite[Theorem 3.3 and Corollary 3.4]{DEP80}, the $SL(s-p, \mathbb C)\times SL(r-p, \mathbb C)$-module of global sections of $Y$ is
\[
H^0\bigl(Y,\mathcal M_Y\bigr)
\;\cong\;
\bigoplus_{\nu\le a\varpi_1} \; V_{SL(s-p)}(\nu)^* \otimes V_{SL(r-p)}(\overline\nu),
\]
where the sum runs over dominant weights $\nu$ for $SL(s-p, \mathbb C)$ with $length(\nu) \leq min\{s-p,r-p\}$ and satisfying the
bound $\nu\le a\varpi_1$ (equivalently, partitions with at most $a$
columns), and $\overline\nu$ denotes the naturally corresponding dominant
weight of $SL(r-p, \mathbb C)$.

Consider the associated vector bundle on the base $H/P_{s,r}$ with fibre
$H^0(Y,\mathcal M_Y)$ and twisted by the character $\mathbb C_{b\varpi_k}$ arising
from the linearization.  We then have
\[
H^0\bigl(X,\mathcal M\bigr)
\;\cong\;
H^0\Bigl(H/P_{s,r},\,H\times_{P_{s,r}}\bigl(H^0(Y,\mathcal M_Y)\otimes\mathbb C_{b\varpi_k}\bigr)\Bigr).
\]
Inserting the decomposition of $H^0(Y,\mathcal M_Y)$, we obtain
\begin{align*}
H^0(X,\mathcal M)
&\;\cong\;
\bigoplus_{\nu\le a\varpi_1}
H^0\Bigl(H/P_{s,r},\,H\times_{P_{s,r}}\bigl(V(\nu)^*\otimes V(\overline\nu)\otimes \mathbb C_{b\varpi_k}\bigr)\Bigr) \\
&\;\cong\; \bigoplus_{\nu \leq a\varpi_1} H^0\left(H/P_{s,r}, \mathcal{L}(V(\nu)^*\otimes V(\overline\nu)\otimes \mathbb C_{b\varpi_k})\right).
\end{align*}
By using Lemma \ref{red}, we get that 
\[H^0(X, \mathcal{M}) 
\cong \bigoplus_{\nu \leq a\varpi_1} H^0\left(H/(B \cap H), \mathcal{L}(\nu, \overline{\nu} + b\varpi_{k})\right).\]
This proves the theorem.
\end{proof}

\subsection{Projective Normality}
In this subsection, we prove the projective normality of $X$ with respect to a suitable ample line bundle $\mathcal{M}$. 

We begin by recalling the definition of projective normality for a projective variety. A projective variety $X \subset \mathbb{P}^n$ is said to be \emph{projectively normal} if the affine cone $\hat{X}$ over $X$ is normal at its vertex. For a reference, see \cite[Ex.3.18, p.23]{RH}.

For practical purposes, we use the following criterion for projective normality of a polarized variety. A polarized variety $(X, \mathcal{L})$, where $\mathcal{L}$ is a very ample line bundle, is said to be projectively normal if its homogeneous coordinate ring 
\[
\bigoplus_{m \in \mathbb{Z}_{\geq 0}} H^0(X, \mathcal{L}^{\otimes m})
\]
is integrally closed, and it is generated as a $\mathbb{C}$-algebra by $H^0(X, \mathcal{L})$ (see \cite[Ex.5.14, Chap.II]{RH}). We note that projective normality depends on the specific projective embedding of the variety.

\begin{theorem}\label{thm:projective_normality}
    Let $\mathcal{M}$ be the descent of $\mathcal{L}({n\varpi_r})$ to $X$. Then, the polarized variety $(X, \mathcal{M})$ is projectively normal.
\end{theorem}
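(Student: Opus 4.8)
The plan is to prove projective normality by reducing it, via the parabolic induction structure $X \cong H \times_{P_{s,r}} Y$ with $Y = \mathbb{P}(M_{s-p,r-p})$, to two ingredients: the well-understood projective normality of the fiber $Y$ under its natural polarization, and the projective normality of the base flag variety $H/P_{s,r}$ with respect to an ample line bundle $\mathcal{L}(b\varpi_k)$. The key algebraic statement we must verify is that the multiplication maps
\[
H^0(X, \mathcal{M}) \otimes H^0(X, \mathcal{M}^{\otimes m}) \longrightarrow H^0(X, \mathcal{M}^{\otimes (m+1)})
\]
are surjective for all $m \geq 1$; together with integral closedness (which follows from the normality of $X$, established earlier via the fact that $Y$ is a projectivized determinantal-type variety and parabolic induction preserves normality), this yields projective normality by the criterion from \cite[Ex.5.14, Chap.II]{RH}.

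First I would record, using Theorem~\ref{thm:G decomposition} and the isomorphism $X \cong H \times_{P_{s,r}} Y$, the explicit description of $H^0(X, \mathcal{M}^{\otimes m})$ as an $H$-module. For each $m$ the space of sections is built from the fiber sections $H^0(Y, \mathcal{M}_Y^{\otimes m})$ — whose decomposition into $SL(s-p)\times SL(r-p)$-modules indexed by partitions $\nu \leq ma\varpi_1$ comes from \cite[Theorem 3.3 and Corollary 3.4]{DEP80} — induced up to $H/P_{s,r}$ and twisted by the character $\mathbb{C}_{mb\varpi_k}$. The multiplication map on $X$ then factors through the corresponding multiplication maps on the base and on the fiber: surjectivity of the fiber multiplication is precisely the projective normality of $Y = \mathbb{P}(M_{s-p,r-p})$ in its Segre/Plücker-type embedding, which is the classical projective normality of the coordinate ring of matrices (again \cite{DEP80}), while surjectivity on the base reduces to the projective normality of the Grassmannian $H/P_{s,r}$ with respect to a multiple of its minuscule fundamental weight, a standard consequence of the theory of standard monomials or of the normality results cited in the introduction (\cite{D}, \cite{CK}, \cite{How}).

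The main step is to combine these two surjectivities into surjectivity on the total space $X$. For this I would use the fact that $\pi: X \to H/P_{s,r}$ is a Zariski-locally trivial fibration with $\pi_*\mathcal{O}_X = \mathcal{O}_{H/P_{s,r}}$, so that sections of $\mathcal{M}^{\otimes m}$ are sections over the base of the associated vector bundle $H \times_{P_{s,r}}(H^0(Y,\mathcal{M}_Y^{\otimes m}) \otimes \mathbb{C}_{mb\varpi_k})$. Because the fiber modules multiply surjectively as $P_{s,r}$-modules and the base line bundles $\mathcal{L}(b\varpi_k)$ are generated by global sections with surjective multiplication, a Borel--Weil--Bott/cohomology-vanishing argument (using that higher cohomology of the relevant nef line bundles on the flag variety vanishes, as in the corollary to Theorem~\ref{thm:cohomology}) shows that the induced multiplication on global sections over the base is surjective.

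The hard part will be controlling the interaction between the fiber and base gradings: the multiplication map on $X$ mixes the partition weight $\nu$ (governing the fiber direction) with the character twist $b\varpi_k$ (governing the base direction), and one must check that the image of the product map covers every summand $H^0(H/(B\cap H), \mathcal{L}(\nu, \overline{\nu} + b\varpi_k))$ appearing in the target. I expect this to require a careful bookkeeping of highest weights — verifying that each Cartan component occurring in $H^0(X,\mathcal{M}^{\otimes(m+1)})$ already appears in the image of products of lower-order sections — for which the multiplicity-freeness of the $H$-module $H^0(X,\mathcal{M}^{\otimes m})$ (guaranteed by the sphericality of $X$, Proposition~\ref{prop:sphericality}) is the crucial simplifying tool, since it reduces the surjectivity check to a comparison of the sets of highest weights on both sides rather than a finer module-theoretic argument.
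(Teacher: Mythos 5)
Your skeleton---normality of $X$ plus generation of the section ring in degree one, via the fibration $X\cong H\times_{P_{s,r}}Y$ and the classical projective normality of fiber and base---is the same as the paper's, but your execution of the crucial surjectivity step has a genuine gap, precisely at the point you yourself flag as ``the hard part'' and then only sketch. Concretely, set $\mathcal V_m:=H\times_{P_{s,r}}\bigl(H^0(Y,\mathcal M_Y^{\otimes m})\otimes\mathbb C_{mb\varpi_k}\bigr)$, so that $H^0(X,\mathcal M^{\otimes m})=H^0(H/P_{s,r},\mathcal V_m)$. The multiplication on $X$ factors as
\[
H^0(\mathcal V_1)\otimes H^0(\mathcal V_m)\longrightarrow H^0(\mathcal V_1\otimes\mathcal V_m)\longrightarrow H^0(\mathcal V_{m+1}).
\]
Surjectivity of the fiberwise multiplication of $P_{s,r}$-modules only gives surjectivity of the \emph{bundle map} $\mathcal V_1\otimes\mathcal V_m\to\mathcal V_{m+1}$; to get surjectivity of the second arrow on global sections you need $H^1$ of the kernel bundle to vanish, and that kernel is the homogeneous bundle of a reducible $P_{s,r}$-module, so Borel--Weil--Bott does not apply to it directly. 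Worse, surjectivity of the first arrow is not automatic for homogeneous vector bundles and is nowhere addressed. Your fallback via multiplicity-freeness could in principle be completed (in the spherical, multiplicity-free situation products of $B\cap H$-eigenvectors in the domain $R$ are nonzero eigenvectors of the summed weight, so one would need the additivity of $\nu\mapsto\overline\nu$ and the fact that every partition with at most $(m+1)a$ columns splits as a sum of partitions with at most $a$ and at most $ma$ columns), but none of this bookkeeping is carried out, so as written the central claim remains unproven.

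The paper avoids this interaction entirely by using Theorem~\ref{thm:cohomology} in degree zero rather than only for vanishing: since $\mathcal M^{\otimes m}=\mathcal L(mb\varpi_k)\otimes\mathcal O_Y(ma)$ with $a,b>0$, it gives
\[
H^0(X,\mathcal M^{\otimes m})\;\cong\;H^0\bigl(H/P_{s,r},\mathcal L(mb\varpi_k)\bigr)\otimes H^0\bigl(Y,\mathcal O_Y(ma)\bigr),
\]
so the multiplication map $H^0(X,\mathcal M)^{\otimes m}\to H^0(X,\mathcal M^{\otimes m})$ is, after regrouping tensor factors, exactly the tensor product of the multiplication maps of the polarized varieties $(H/P_{s,r},\mathcal L(b\varpi_k))$ and $(Y,\mathcal O_Y(a))$, each surjective by their projective normality; the fiber/base mixing you anticipate never appears. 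In particular, the fine $H$-module decomposition of Theorem~\ref{thm:G decomposition} and the sphericality of $X$, which drive your plan, are not needed for this theorem---the $i=0$ case of Theorem~\ref{thm:cohomology} is the tool your proposal is missing.
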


\begin{proof}
    Since the Grassmannian $G(r,n)$ is normal, it follows that the GIT quotient $X$ is also normal. Consider the homogeneous coordinate ring of $(X, \mathcal{M})$:
    $$
    \bigoplus_{d \in \mathbb{Z}_{\geq 0}} H^0(X, \mathcal{M}^{\otimes d}).
    $$
        Write 
    $$
    R := \bigoplus_{d \in \mathbb{Z}_{\geq 0}} R_d, \quad \text{where} \quad R_d := H^0(X, \mathcal{M}^{\otimes d}).
    $$
    To prove the projective normality of $(X, \mathcal{M})$, it is enough to show that $R$ is generated by $R_1$ as a $\mathbb{C}$-algebra. To this end, we first prove that the multiplication map
    \begin{equation}\label{Eq:projnormal}
      \psi: H^0(X, \mathcal{M})^{\otimes m}\rightarrow H^0(X, \mathcal{M}^{\otimes m})
    \end{equation}
    is surjective for all $m \in \mathbb Z_{\geq 0}$.
    Recall from Theorem \ref{thm:structure} that we have 
    $X \cong H \times_{P_{s,r}} Y$, where $Y = \mathbb{P}(M_{s-p, r-p})$. Moreover, by Lemma \ref{lem:linebundle}, we can write
    $$
    \mathcal{M} = \mathcal{L}(b\varpi_{k}) \otimes \mathcal{O}_Y(a), \quad \text{for some } a, b \in \mathbb{Z}.
    $$
    Further, since $\mathcal M$ is ample we have $a,b \in \mathbb N$. Therefore, by Theorem \ref{thm:cohomology}, we have
    \begin{equation}\label{Eq:coho}
        H^0(X, \mathcal{M}) = H^0(H/P_{s,r}, \mathcal{L}({b\varpi_{k}})) \otimes H^0(Y, \mathcal{O}_Y(a)).
    \end{equation}
    Since the polarized varieties $(H/P_{s,r}, \mathcal{L}({b\varpi_{k}}))$ and $(Y, \mathcal{O}_Y(a))$ are projectively normal, the multiplication maps
    $$
    H^0(H/P_{s,r}, \mathcal{L}({b\varpi_{k}}))^{\otimes m} \rightarrow H^0(H/P_{s,r}, \mathcal{L}({b\varpi_{k}})^{\otimes m})
    $$
    and
    $$
    H^0(Y, \mathcal{O}_Y(a))^{\otimes m} \rightarrow H^0(Y, \mathcal{O}_Y(ma))
    $$
    are surjective. Thus, by Equation \eqref{Eq:coho}, we get the map $\psi$
       in Equation \eqref{Eq:projnormal} is surjective.
    Now, by induction, we conclude that $R$ is generated by $R_1$. Therefore, using \cite[Ex.~5.14, Chap.~II]{RH}, we conclude that the polarized variety $(X, \mathcal{M})$ is projectively normal.
\end{proof}




\end{document}